\newcommand{\Dotfill}{\leaders\hbox to 8pt{\hss.\hss}\hfill}
\tikzset{
v/.style={draw, fill, circle, minimum size=1.5mm, inner sep=0},
b/.style={draw , circle, minimum size=2.5mm, inner sep=.5mm},
e/.style={very thick},
vs/.style={draw, fill, circle, minimum size=1mm, inner sep=0},
bs/.style={draw, fill, circle, minimum size=1.5mm, inner sep=0mm},
es/.style={thick}
}
\newlength{\nodeheight}
\newlength{\nodewidth}
\numberwithin{thmcounter}{section}
\newaliascnt{thmauto}{thmcounter}
\newaliascnt{Defauto}{thmcounter}
\newaliascnt{exauto}{thmcounter}
\newaliascnt{lemauto}{thmcounter}
\newaliascnt{propauto}{thmcounter}
\newaliascnt{corauto}{thmcounter}
\newaliascnt{remauto}{thmcounter}
\newaliascnt{convauto}{thmcounter}
\newtheorem{thmA}{Theorem}
\newtheorem{thm}[thmauto]{Theorem}
\newtheorem{prop}[propauto]{Proposition}
\newtheorem{lem}[lemauto]{Lemma}
\newtheorem{cor}[corauto]{Corollary}
\theoremstyle{definition}
\newtheorem{Def}[Defauto]{Definition}
\newtheorem{rem}[remauto]{Remark}
\numberwithin{equation}{section}
\DeclareMathOperator{\Hom}{Hom}
\DeclareMathOperator{\End}{End}
\providecommand{\id}{\ensuremath\mathrm{id}}
\DeclareMathOperator{\GL}{GL}
\DeclareMathOperator{\Orth}{O}
\DeclareMathOperator{\Fun}{Fun}
\newcommand{\hooklongrightarrow}{\lhook\joinrel\longrightarrow}
\newcommand{\inject}{\hooklongrightarrow}
\newcommand{\tens}[1][]{\mathbin{\otimes_{#1}}}
\DeclareMathOperator{\Ind}{Ind}
\DeclareMathOperator{\Res}{Res}
\DeclareMathOperator{\TL}{TL}
\DeclareMathOperator{\Br}{Br}
\providecommand{\CTL}{\ensuremath\mathcal C_{\TL}}
\providecommand{\CBr}{\ensuremath\mathcal C_{\Br}}
\providecommand{\CP}{\ensuremath\mathcal C_{P}}
\providecommand{\CA}{\ensuremath{\mathcal{C}_A}}
\providecommand{\FI}{\ensuremath\mathsf{FI}}
\providecommand{\xmod}[1]{\ensuremath{#1\mathsf{-mod}}}
\providecommand{\CAmod}{\ensuremath{\CA\mathsf{-mod}}}
\title{Representation stability for diagram algebras}
\author{Peter Patzt}
\thanks{Peter Patzt was supported by the Danish National Research Foundation through the Copenhagen Centre for Geometry and Topology (DNRF151)}
\address{Centre for Geometry and Topology, University of Copenhagen}
\email{ppatzt@gmail.com}
\begin{document}

\maketitle
\begin{abstract}We introduce stability categories for diagram algebras---analogues to Randal-Williams and Wahl's homogeneous categories. We use these to study representation stability properties of the Temperley--Lieb algebras, the Brauer algebras, and the partition algebras.\end{abstract}
    
%

\section{Introduction}

Representation stability is the study of sequences of representations. In the past, these were group representation of sequences of groups first and foremost the symmetric groups studied originally by Church--Ellenberg--Farb \cite{CEF}. A variety of different sequences of groups followed, see for example \cite{Wi,PS,PaTorelli,Nate}. The technique is always to construct a suitable category that describes the action of the groups and the interactions between them, and then study the representations over such category. Many of these constructions fit into common frameworks as those by Randal-Williams--Wahl \cite{RW} and others \cite{PS,Hepworth,PaCS}. Representation stability expanded to also study the representations over categories that do not come from sequences of groups but still have close resemblance for example in \cite{RamosFId,NSSFIM,PSSOI,NagpalVI}.

In this paper, we will study sequences of representations over the Temperley--Lieb algebras, the Brauer algebras, and the partition algebras. In order to do so, we construct a category enriched over $R$-modules and consider its representations (linear functors to the category of $R$-modules). The main theorems are that representations presented in finite degree exhibit representation stability, i.e.\ the decomposition into irreducible representations stabilizes.

We will deal with the following diagram algebras: the Temperley--Lieb algebra---first introduced by Temperley--Lieb \cite{TL} in the context of statistical mechanics---, the Brauer algebra---introduced by Brauer \cite{B} to study representations of the orthogonal and the symplectic groups---, and the partition algebra---introduced by Martin \cite{M91} also in the context of statistical mechanics. These (and other) diagram algebras have a distinguished basis that one interprets as diagrams. For these and other reasons they behave similar to group algebras.

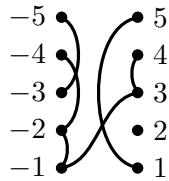
\begin{figure}[h!]
\centering
\begin{tikzpicture}
\fill (0,0)           circle (.75mm) node[left=2pt](a5) {$-5$};
\fill (0,\nodeheight) circle (.75mm) node[left=2pt](a4) {$-4$};
\multiply\nodeheight by 2
\fill (0,\nodeheight) circle (.75mm) node[left=2pt](a3) {$-3$};
\divide\nodeheight by 2
\multiply\nodeheight by 3
\fill (0,\nodeheight) circle (.75mm) node[left=2pt](a2) {$-2$};
\divide\nodeheight by 3
\multiply\nodeheight by 4
\fill (0,\nodeheight) circle (.75mm) node[left=2pt](a1) {$-1$};
\divide\nodeheight by 4

\fill (\nodewidth,0) circle (.75mm) node[right=2pt](b5) {$5$};
\fill (\nodewidth,\nodeheight) circle (.75mm) node[right=2pt](b4) {$4$};
\multiply\nodeheight by 2
\fill (\nodewidth,\nodeheight) circle (.75mm) node[right=2pt](b3) {$3$};
\divide\nodeheight by 2
\multiply\nodeheight by 3
\fill (\nodewidth,\nodeheight)  circle (.75mm) node[right=2pt](b2) {$2$};
\divide\nodeheight by 3
\multiply\nodeheight by 4
\fill (\nodewidth,\nodeheight)  circle (.75mm) node[right=2pt](b1) {$1$};
\divide\nodeheight by 4

\draw[e] (a1) to[out=0, in=180] (b3);
\draw[e] (a1) to[out=0, in=0] (a2);
\draw[e] (a2) to[out=0, in=0]   (a4);
\draw[e] (b3) to[out=180, in=180] (b4);
\draw[e] (a5) to[out=0, in=0] (a3);
\draw[e] (b1) to[out=180,in=180] (b5);

\end{tikzpicture}
\caption{Visualization of the partition $\{\{-5,-3\},\{-4,-2,-1,3,4\},\{1,5\},\{2\}\}$}
\label{fig:P5ex}
\end{figure}

The partition algebra $P_n = P_n(R,\delta)$ is a free $R$-module over the basis of all partitions of the union of the sets $[-n]=\{-n,\dots,-1\}$ and $[n]= \{1,\dots,n\}$. We visualize these by placing $n$ dots on the left for the negative numbers and $n$ dots on the right for the positive numbers and connect two dots if they are in the same block. (We do not connect all dots in the same block with each other, but rather rely on transitivity.) Given two partition $p$ and $r$, to calculate their product $pr$, we place $p$ to the left of $r$ so that the dots align and connect the blocks of $p$ with the one of $r$. This way we get a new partition $s$ of $[-n]\cup [n]$. Assume there are $a$ blocks that connect solely to dots in the middle, then we define $pr = \delta^a s$.

\begin{figure}[h!]
\centering
\[
\begin{tikzpicture}[x=1.5cm,y=-.5cm,baseline=-1.05cm]

\node[v] (a1) at (0,0) {};
\node[v] (a2) at (0,1) {};
\node[v] (a3) at (0,2) {};
\node[v] (a4) at (0,3) {};
\node[v] (a5) at (0,4) {};

\node[v] (b1) at (1,0) {};
\node[v] (b2) at (1,1) {};
\node[v] (b3) at (1,2) {};
\node[v] (b4) at (1,3) {};
\node[v] (b5) at (1,4) {};

\draw[e] (a1) to[out=0, in=180] (b3);
\draw[e] (a1) to[out=0, in=0] (a2);
\draw[e] (a2) to[out=0, in=0]   (a4);
\draw[e] (b3) to[out=180, in=180] (b4);
\draw[e] (a5) to[out=0, in=0] (a3);
\draw[e] (b1) to[out=180,in=180] (b5);

\end{tikzpicture}
\quad
\cdot
\quad
\begin{tikzpicture}[x=1.5cm,y=-.5cm,baseline=-1.05cm]

\node[v] (b1) at (0,0) {};
\node[v] (b2) at (0,1) {};
\node[v] (b3) at (0,2) {};
\node[v] (b4) at (0,3) {};
\node[v] (b5) at (0,4) {};

\node[v] (c1) at (1,0) {};
\node[v] (c2) at (1,1) {};
\node[v] (c3) at (1,2) {};
\node[v] (c4) at (1,3) {};
\node[v] (c5) at (1,4) {};

\draw[e] (b2) to[out=0,in=0] (b5);
\draw[e] (b3) to[out=0,in=180] (c1);
\draw[e] (b4) to[out=0,in=180] (c5);
\draw[e] (c1) to[out=180,in=180] (c2);
\draw[e] (c3) to[out=180,in=180] (c4);

\end{tikzpicture}
\quad
=
\quad
\begin{tikzpicture}[x=1.5cm,y=-.5cm,baseline=-1.05cm]

\node[v] (a1) at (0,0) {};
\node[v] (a2) at (0,1) {};
\node[v] (a3) at (0,2) {};
\node[v] (a4) at (0,3) {};
\node[v] (a5) at (0,4) {};

\node[v] (b1) at (1,0) {};
\node[v] (b2) at (1,1) {};
\node[v] (b3) at (1,2) {};
\node[v] (b4) at (1,3) {};
\node[v] (b5) at (1,4) {};

\node[v] (c1) at (2,0) {};
\node[v] (c2) at (2,1) {};
\node[v] (c3) at (2,2) {};
\node[v] (c4) at (2,3) {};
\node[v] (c5) at (2,4) {};

\draw[e] (a1) to[out=0, in=180] (b3);
\draw[e] (a1) to[out=0, in=0] (a2);
\draw[e] (a2) to[out=0, in=0]   (a4);
\draw[e] (b3) to[out=180, in=180] (b4);
\draw[e] (a5) to[out=0, in=0] (a3);
\draw[e] (b1) to[out=180,in=180] (b5);

\draw[e] (b2) to[out=0,in=0] (b5);
\draw[e] (b3) to[out=0,in=180] (c1);
\draw[e] (b4) to[out=0,in=180] (c5);
\draw[e] (c1) to[out=180,in=180] (c2);
\draw[e] (c3) to[out=180,in=180] (c4);

\end{tikzpicture}
\quad
= \delta \cdot
\quad
\begin{tikzpicture}[x=1.5cm,y=-.5cm,baseline=-1.05cm]

\node[v] (a1) at (0,0) {};
\node[v] (a2) at (0,1) {};
\node[v] (a3) at (0,2) {};
\node[v] (a4) at (0,3) {};
\node[v] (a5) at (0,4) {};

\node[v] (c1) at (1,0) {};
\node[v] (c2) at (1,1) {};
\node[v] (c3) at (1,2) {};
\node[v] (c4) at (1,3) {};
\node[v] (c5) at (1,4) {};

\draw[e] (a1) to[out=0, in=180] (c1);
\draw[e] (a1) to[out=0, in=0] (a2);
\draw[e] (a2) to[out=0, in=0]   (a4);
\draw[e] (a5) to[out=0, in=0] (a3);
\draw[e] (c1) to[out=180,in=180] (c2);
\draw[e] (c1) to[out=180,in=180] (c2);
\draw[e] (c3) to[out=180,in=180] (c4);
\draw[e] (c2) to[out=180,in=180] (c5);
\end{tikzpicture}
\]
\caption{Multiplication in the partition algebra}
\label{fig:MultP5}
\end{figure}
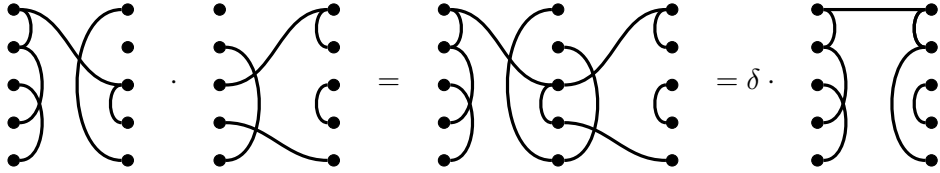

The Brauer algebra $\Br_n = \Br_n(R,\delta) \subset P_n(R,\delta)$ is the restriction of the partition algebra to those partitions that are perfect matchings, i.e. all blocks have exactly two elements. The Temperley--Lieb algebra $\TL_n = \TL_n(R,\delta) \subset \Br_n(R,\delta)$ is the restriction of the Brauer algebra to those partitions that contain no two blocks $\{a,b\}$ and $\{c,d\}$ with $a<c<b<d$, i.e. the lines in the diagram do not intersect.

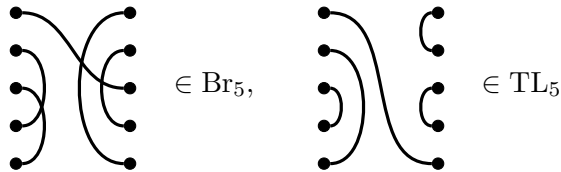
\begin{figure}[h!]
\centering
\[
\begin{tikzpicture}[x=1.5cm,y=-.5cm,baseline=-1.05cm]

\node[v] (a1) at (0,0) {};
\node[v] (a2) at (0,1) {};
\node[v] (a3) at (0,2) {};
\node[v] (a4) at (0,3) {};
\node[v] (a5) at (0,4) {};

\node[v] (b1) at (1,0) {};
\node[v] (b2) at (1,1) {};
\node[v] (b3) at (1,2) {};
\node[v] (b4) at (1,3) {};
\node[v] (b5) at (1,4) {};

\draw[e] (a1) to[out=0, in=180] (b3);
\draw[e] (a2) to[out=0, in=0]   (a4);
\draw[e] (b2) to[out=180, in=180] (b4);
\draw[e] (a5) to[out=0, in=0] (a3);
\draw[e] (b1) to[out=180,in=180] (b5);

\end{tikzpicture}
\quad
\in \Br_5,
\qquad
\begin{tikzpicture}[x=1.5cm,y=-.5cm,baseline=-1.05cm]

\node[v] (b1) at (0,0) {};
\node[v] (b2) at (0,1) {};
\node[v] (b3) at (0,2) {};
\node[v] (b4) at (0,3) {};
\node[v] (b5) at (0,4) {};

\node[v] (c1) at (1,0) {};
\node[v] (c2) at (1,1) {};
\node[v] (c3) at (1,2) {};
\node[v] (c4) at (1,3) {};
\node[v] (c5) at (1,4) {};

\draw[e] (b2) to[out=0,in=0] (b5);
\draw[e] (b3) to[out=0,in=0] (b4);
\draw[e] (b1) to[out=0,in=180] (c5);
\draw[e] (c1) to[out=180,in=180] (c2);
\draw[e] (c3) to[out=180,in=180] (c4);

\end{tikzpicture}
\quad
\in \TL_5\]
\caption{Examples of elements in $\Br_5$ and $\TL_5$}
\label{fig:Br5TL5ex}
\end{figure}

In this paper, we will generalize the construction of Randal-Williams--Wahl \cite{RW} to the Temperley--Lieb algebras, the Brauer algebras, and the partition algebras and define respective categories $\CTL$, $\CBr$, and $\CP$ in \autoref{thm:CA}.  We furthermore generalize the notion of representation stability defined by Church--Farb \cite{CF} to sequences of semisimple representations over these algebras in \autoref{def:repstab}. Our main theorems are then the following.

\begin{thmA}\label{thmA:TL} Under the assumption that $R = \mathbb C$ and $\delta^2 = 2+q+q^{-1}$ for some $q\in \mathbb C$,
a finitely presented $\CTL$-module is representation stable.
\end{thmA}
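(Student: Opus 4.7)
The plan is to exploit the semisimplicity that the hypothesis on $\delta$ provides and reduce representation stability of a finitely presented $\CTL$-module to a branching-rule computation. Under the assumption $\delta^2 = 2+q+q^{-1}$ for sufficiently generic $q \in \mathbb C$, each algebra $\TL_n(\mathbb C, \delta)$ is semisimple, and its simple modules $V_\lambda^n$ are indexed by integers $\lambda$ with $\lambda \equiv n \pmod 2$ and $0 \le \lambda \le n$. The combinatorial engine of the argument will be the branching rule $\Res^{\TL_n}_{\TL_{n-1}} V_\lambda^n \cong V_{\lambda-1}^{n-1} \oplus V_{\lambda+1}^{n-1}$ (with $V_j^{n-1} = 0$ outside the allowed range) and its Frobenius-reciprocal induction version; both are classical.

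Next I would reduce to the case of principal projective $\CTL$-modules $P(k) := \CTL(k,-)$. Any finitely presented $\CTL$-module $M$ fits into an exact sequence $\bigoplus_i P(a_i) \to \bigoplus_j P(b_j) \to M \to 0$ with finitely many summands. Because $\TL_n$ is semisimple, evaluation $M \mapsto M_n$ is exact at each level, and the multiplicity $\dim \Hom_{\TL_n}(V_\lambda^n, M_n)$ is additive in short exact sequences. Hence it is enough to prove that each $P(k)$ is representation stable. Using the structure of $\CTL$ from \autoref{thm:CA}, one should identify $P(k)_n$ as a $\TL_n$-module built by induction from an inclusion $\TL_k \otimes \TL_{n-k} \hookrightarrow \TL_n$, evaluated on a fixed representation on the $\TL_k$ factor and a suitably distinguished (``through-strands'' type) module on the $\TL_{n-k}$ factor. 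Iterated application of the branching rule then expresses $\dim\Hom_{\TL_n}(V_\lambda^n, P(k)_n)$ as a count of lattice paths in the Temperley--Lieb Bratteli diagram from a prescribed starting set to $(\lambda, n)$, and one checks that these counts stabilize in $n$ once $n$ is large relative to $k$ and $\lambda$.

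The main obstacle will be the explicit identification of $P(k)_n$ as a $\TL_n$-module and the concomitant stabilization of the path counts; matching the abstract categorical structure of $\CTL$ to the concrete Bratteli-diagram bookkeeping is where the real content lies. A likely alternative, and perhaps the route actually followed in the paper, is to establish a central-stability theorem in the spirit of Putman--Sam and \cite{PaCS} for $\CTL$-modules: one defines generation and relation degrees, shows that finite presentation forces a centrally stable presentation, and then deduces representation stability from central stability by applying the branching rule once. In that framework my Step~1 is unchanged, Step~2 is absorbed into general homological algebra of $\CTL$-modules, and the bulk of the work concentrates on the categorical properties (polynomiality, Noetherianity, or an Ind/Res adjunction with good vanishing) of $\CTL$ itself.
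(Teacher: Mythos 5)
Your Step 2 contains the decisive gap: additivity of multiplicities in a semisimple setting does \emph{not} reduce representation stability of a finitely presented module to that of the principal projectives. Writing $M = \coker\bigl(f\colon \bigoplus_i P(a_i) \to \bigoplus_j P(b_j)\bigr)$, the multiplicity of $V^n_\lambda$ in $M_n$ is the multiplicity in $\bigoplus_j P(b_j)_n$ minus the multiplicity in the image $\im(f)_n$; the image is a finitely generated submodule that is not itself a sum of projectives, and nothing in your argument controls how its multiplicities vary with $n$. Representation stability is not formally closed under cokernels (the levelwise semisimplicity does not make the category of $\CTL$-modules semisimple, since the structure maps $\phi_n$ glue the levels), and no Noetherianity theorem for $\CTL$ is available in the paper to fall back on. Your ``hence it is enough to prove that each $P(k)$ is representation stable'' is therefore a non sequitur, and it conceals exactly the hard part; note also that the injectivity and surjectivity conditions of \autoref{def:repstab} for $M$ are left unaddressed by the path-counting plan.

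The paper closes this gap with the stability-degree machinery of Church--Ellenberg--Farb rather than either of your two routes. It introduces the coinvariants functors $\tau_{n,a}(-) = \mathbb C \otimes_{\TL_{n-a}} (-)$, proves diagrammatically that $M(m)$ has injectivity degree $0$ and surjectivity degree $\le m$ (\autoref{thm:stabdegM(m)}), and then---because semisimplicity makes $\tau$ exact---runs a Four-Lemma chase on a presentation $0 \to K \to P \to V \to 0$ to give any finitely presented $V$ stability degree $\le \max(g,r)$ (\autoref{prop:finpres implies stabdeg}); this is the step your additivity claim skips. Converting stability degree into multiplicity stability is itself nontrivial: it needs the weight bound $|\lambda| \le g$ for modules generated in degree $g$ (\autoref{prop:TLweight}) and an induction on $|\lambda|$ driven by the Pieri-rule computation of $\tau_{n,a}\TL_n(\lambda[n])$ (\autoref{prop:TLtau}). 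Your Bratteli-diagram bookkeeping for the projectives is essentially sound and corresponds to the paper's branching analysis via the Littlewood--Richardson rule (\autoref{thm:TL branching}, which requires $q$ not a root of unity---your ``sufficiently generic $q$'' should be made precise), but your speculated alternative of a central-stability theorem in the style of Putman--Sam is not what the paper does, and it would face the same quantitative demands that the stability-degree argument resolves.
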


\begin{thmA}\label{thmA:Br} Under the assumption that $R = \mathbb C$ and $\delta\in \mathbb C\setminus\{0,1,2, \dots\}$,
a finitely presented $\CBr$-module is representation stable.
\end{thmA}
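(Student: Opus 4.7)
The plan is to parallel the proof of \autoref{thmA:TL}, replacing Temperley--Lieb inputs with their Brauer-algebra analogues. The hypothesis $\delta \in \mathbb{C} \setminus \{0,1,2,\dots\}$ is precisely the classical semisimplicity condition for $\Br_m(\mathbb{C},\delta)$ at every level $m$ (due to Wenzl and Rui), in which case the simple right $\Br_m$-modules $V_\mu^{(m)}$ are indexed by partitions $\mu$ with $|\mu| \le m$ and $m - |\mu|$ even. In this semisimple setting, representation stability in the sense of \autoref{def:repstab} amounts, for a $\CBr$-module $M$, to the eventual constancy in $m$ of the multiplicity of each $V_\mu^{(m)}$ in $M(m)$.

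First, I would reduce to the case of representable modules. A finitely presented $\CBr$-module $M$ fits in an exact sequence
\[
\bigoplus_{i=1}^{r} \CBr(n_i,-) \longrightarrow \bigoplus_{j=1}^{s} \CBr(n'_j,-) \longrightarrow M \longrightarrow 0.
\]
Since each $\Br_m$ is semisimple, $M(m)$ is the cokernel of a map between finite-length semisimple $\Br_m$-modules, hence itself semisimple, and the multiplicity of any fixed simple $V_\mu^{(m)}$ in $M(m)$ is the difference of its multiplicities in the target and in the image. Eventual constancy of the relevant multiplicities for each representable therefore implies the same for $M$, and it suffices to show that every $\CBr(n,-)$ is representation stable.

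Next, I would unpack the definition of $\CBr$ provided by \autoref{thm:CA} and identify $\CBr(n,m)$ as a right $\Br_m$-module. Following the Randal-Williams--Wahl template, I expect $\CBr(n,m)$ to be realized as an induced module from the subalgebra $\Br_n \otimes \Br_{m-n} \hookrightarrow \Br_m$---equivalently, as $e_{m-n}\cdot \Br_m$ for a suitable idempotent $e_{m-n}$ attached to this tensor decomposition. Frobenius reciprocity then expresses the multiplicity of $V_\mu^{(m)}$ in $\CBr(n,m)$ as a multiplicity inside $\Res V_\mu^{(m)}$, and iterating the Brauer branching rule (Halverson--Ram, Nazarov)---restriction from $\Br_{k+1}$ to $\Br_k$ decomposes $V_\mu^{(k+1)}$ as the sum of $V_\nu^{(k)}$ over partitions $\nu$ obtained from $\mu$ by adding or removing a single box---expresses this multiplicity as a count of paths in the Brauer Bratteli diagram.

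The main obstacle is this last step: showing that, for each fixed $\mu$ and $n$, the iterated path count stabilizes as $m \to \infty$. Both the parity constraint $m - |\mu| \equiv 0 \pmod 2$ and the presence of box-removal steps in Brauer branching---features absent in the FI-module / symmetric-group setting---are genuine combinatorial complications relative to \autoref{thmA:TL}. I expect the resolution to come from a bijection showing that, once $m$ is sufficiently large compared to $|\mu|$ and $n$, every ``excess'' step in a length-$(m-n)$ path can be absorbed by a canonical add-then-remove pair along a distinguished column outside the support of $\mu$; this would identify the count for $m$ with the count for $m-2$, making it stable in $m$. Once such a combinatorial lemma is in place, the rest of the argument becomes routine bookkeeping parallel to the Temperley--Lieb case.
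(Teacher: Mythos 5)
There is a genuine gap, in two places. First, your reduction to representables does not work as stated. Writing $M$ as a cokernel, the multiplicity of a simple in $M(m)$ is the multiplicity in $P_0(m)$ minus its multiplicity in the \emph{image} of $P_1(m)\to P_0(m)$, and nothing you say controls that image: even if both representables have eventually constant multiplicities, the rank of the induced map on a fixed isotypic component could a priori oscillate with $m$. The paper's entire mechanism exists to close exactly this hole: it introduces the coinvariants functor $\tau_{n,a}(-)=\mathbb C\tens[A_{n-a}](-)$, proves by a diagram count that the free modules $M(m)$ have injectivity degree $0$ and surjectivity degree $\le m$ (\autoref{thm:stabdegM(m)}), and then a Four Lemma argument (\autoref{prop:finpres implies stabdeg}) shows that a finitely presented module has finite \emph{stability degree}, i.e.\ the maps $\tau_{n+a,a}V_{n+a}\to\tau_{n+a+1,a}V_{n+a+1}$ are eventually isomorphisms. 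It is this statement about the maps, not stability of the representables themselves, that makes the presentation usable. Note also that your formulation of multiplicity stability (eventual constancy of the multiplicity of a \emph{fixed} simple $V_\mu^{(m)}$) is not the paper's \autoref{def:repstab}: there one writes $V_n\cong\bigoplus_{\mu,i}\Br_n(\mu[n-2i])^{\oplus c_{\mu,i,n}}$, padding the first row, and asks that $c_{\mu,i,n}$ stabilize; with your fixed-$\mu$ convention the parity constraint $m\equiv|\mu|\pmod 2$ makes the multiplicity vanish for every other $m$.

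Second, the branching step aims at the wrong coefficient, and the key lemma is left as a conjecture. Since $\Hom_{\CBr}(n,m)=\Br_m\tens[\Br_{m-n}]\mathbb C=\Ind_{\Br_n\otimes\Br_{m-n}}^{\Br_m}\Br_n\otimes\mathbb C$, Frobenius reciprocity requires the multiplicity of $(\text{simple})\otimes(\text{trivial})$ in the \emph{two-sided} restriction $\Res^{\Br_m}_{\Br_n\otimes\Br_{m-n}}$; iterating the one-box branching rule $\Br_k\subset\Br_{k+1}$ computes the full restriction to $\Br_n$, which sums over \emph{all} $\Br_{m-n}$-constituents, not just the trivial one, so Bratteli path counts are not the right combinatorial model. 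The paper instead obtains the two-sided rule in closed form (\autoref{cor:Brauerbranching}) via Schur--Weyl duality with $\Orth_n$ and the stable coefficients $d_{\mu\nu}^{\lambda}=\sum_{\alpha,\beta,\gamma}c_{\alpha\beta}^{\lambda}c_{\alpha\gamma}^{\mu}c_{\beta\gamma}^{\nu}$, after which Pieri's formula yields precisely the vanishing and $n$-independence statements (\autoref{prop:Brauertau}) that your conjectured add-then-remove bijection is supposed to supply---so the hard combinatorial content of your sketch is exactly what remains unproved. Finally, even granting all of that, $\tau_{n,m}V_n$ mixes contributions from all constituents of $V_n$, so extracting a single multiplicity requires the weight bound $|\mu|+i\le g$ (\autoref{prop:Brauerweight}) and a double induction, downward in the defect $i$ and upward in $|\mu|$; this bookkeeping is where the Brauer case genuinely differs from the Temperley--Lieb case and is absent from your proposal.
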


\begin{thmA}\label{thmA:P} Under the assumption that $R = \mathbb C$ and $\delta\in \mathbb C\setminus\{0,1,2, \dots\}$,
a finitely presented $\CP$-module is representation stable.
\end{thmA}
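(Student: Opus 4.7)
The plan is to exploit the semisimplicity of the partition algebras under the hypothesis on $\delta$. By Martin's classical semisimplicity criterion, when $R = \mathbb{C}$ and $\delta \in \mathbb{C} \setminus \{0,1,2,\ldots\}$, the algebra $P_n(\mathbb{C},\delta)$ is semisimple for every $n \geq 0$, and its simple modules $V_n^\lambda$ are naturally indexed by partitions $\lambda$ with $|\lambda| \leq n$. Moreover, the irreducible $V_n^\lambda$ for a fixed $\lambda$ and varying $n$ plays the role of the ``stable'' irreducible indexed by $\lambda$ in \autoref{def:repstab}, so representation stability of a $\CP$-module $M$ reduces to showing that for each fixed partition $\lambda$ the multiplicity $[M(n):V_n^\lambda]$ is eventually constant in $n$, that this stable multiplicity vanishes when $|\lambda|$ exceeds the generation degree of $M$, and that the $\CP$-structure maps identify isotypic components for $n \gg 0$.

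First I would reduce to the case of a free $\CP$-module. A finitely presented $M$ fits into an exact sequence
\[
F_1 \longrightarrow F_0 \longrightarrow M \longrightarrow 0
\]
with $F_0,F_1$ finite direct sums of free $\CP$-modules of the form $\CP(k,-)\tens[P_k] V_k^\mu$ for various $k$ and $\mu$. Evaluating at $n$ gives an exact sequence of semisimple $P_n$-modules, so
\[
[M(n):V_n^\lambda] \;=\; [F_0(n):V_n^\lambda] \;-\; [\im(F_1(n)\to F_0(n)):V_n^\lambda].
\]
Since the image of a morphism between eventually constant sequences of semisimple representations is again eventually constant (by additivity of dimensions of isotypic components in short exact sequences), it is enough to establish stability for each free summand and subtract.

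The heart of the argument is therefore the computation of $[\CP(k,n)\tens[P_k]V_k^\mu : V_n^\lambda]$ for $n$ large. Morphisms in $\CP$ from $k$ to $n$ should, by the construction in \autoref{thm:CA}, factor through a chain of semisimple inclusions running up the Bratteli diagram of the partition algebras; this should identify $\CP(k,-)\tens[P_k]V_k^\mu$, as a sequence of $P_n$-modules, with an induced representation whose character is encoded by paths in that branching graph. Since the branching rule between $P_m$ and $P_{m+1}$ acts on a partition $\lambda$ by adding a box, removing a box, or leaving it fixed, in a manner independent of $m$ in the stable range, the number of paths from $(k,\mu)$ to $(n,\lambda)$ stabilizes once $n-|\lambda|$ and $n-|\mu|$ are both sufficiently large. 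This gives the desired eventual constancy of the multiplicity, together with the vanishing for $|\lambda|$ greater than the generation degree.

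The main obstacle I expect is the clean identification of the free $\CP$-module as an induced representation and the careful bookkeeping through the non-propagating (``cups and caps'') part of a partition diagram, which has no analogue in the group-theoretic setting of \cite{RW}. The parallel theorems \autoref{thmA:TL} and \autoref{thmA:Br} should follow by strictly simpler variants of the same argument, since the Bratteli diagrams for $\TL_n$ and $\Br_n$ involve only add-a-box/remove-a-box branching on Young diagrams and no half-integer levels. Once the free case is settled, the exact-sequence reduction above, combined with semisimple additivity, upgrades the stability to all finitely presented $\CP$-modules and completes the proof.
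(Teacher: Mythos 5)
Your overall skeleton---choose a presentation $F_1 \to F_0 \to M \to 0$, identify free modules as induced representations, and track multiplicities---starts out parallel to the paper, but the subtraction step is a genuine gap, and it is exactly the point the paper's machinery exists to fix. Eventual constancy of $[\im(F_1(n)\to F_0(n)) : V_n^\lambda]$ does \emph{not} follow from eventual constancy of the multiplicities in $F_1(n)$ and $F_0(n)$: the image depends on the maps, not just their sources and targets, and additivity of isotypic components only gives $[\im(n)] = [F_0(n)] - [M(n)]$, so your justification is circular---the image sequence is multiplicity-stable if and only if $M$ is, which is the theorem being proved. The paper supplies the missing uniformity through the coinvariants functor $\tau_{n,a}V_n = \mathbb{C}\tens[P_{n-a}]V_n$: it is exact because everything is semisimple; \autoref{thm:stabdegM(m)} shows by an explicit count with blob diagrams that the free module $M(m)$ has injectivity degree $\le 0$ and surjectivity degree $\le 2m$ in the $\CP$ case; the Four Lemma then gives a finitely presented $V$ stability degree $\le \max(2g,2r)$ (\autoref{prop:finpres implies stabdeg}); and \autoref{prop:partition tau} (namely $\tau_{n,a}P_n(\lambda)\cong P_a(\lambda)$ if $\lambda\in\Pi_{P_a}$ and $0$ otherwise) converts stabilization of the single $P_g$-module $\tau_{n,g}V_n$ into stabilization of all multiplicities simultaneously, via $c_{\lambda,n} = [P_g(\lambda),\tau_{n,g}V_n]$. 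Some input of this kind (or a Noetherianity-type theorem) is unavoidable; ``free case plus subtraction'' alone does not close.

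Your computation for the free modules is also off. Evaluating gives $M(k)_n\tens[P_k]V_k^\mu \cong \Ind_{P_k\otimes P_{n-k}}^{P_n}\bigl(V_k^\mu\otimes \mathbb{C}\bigr)$, induction from a product with the \emph{trivial} $P_{n-k}$-module---because $\Hom_{\CP}(k,n)=P_n\tens[P_{n-k}]\mathbb{C}$ kills all noninvertible diagrams of $P_{n-k}$---and not induction up the tower $P_k\subset P_{k+1}\subset\cdots$. Counting paths from $(k,\mu)$ to $(n,\lambda)$ in the Bratteli diagram computes the latter, and those path counts grow with $n$ rather than stabilize, since one may repeatedly add and remove boxes; the multiplicities that actually occur are reduced Kronecker coefficients, which with a trivial factor collapse to $\delta_{\lambda\mu}$ (this is how \autoref{prop:partition tau} and \autoref{prop:partition weight} are proved, via Schur--Weyl duality with $\mathfrak S_n$). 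Finally, your closing remark is backwards: in this strategy the partition case has the \emph{cleanest} endgame, precisely because $\tau_{n,a}$ sends simples to simples or zero, whereas the Temperley--Lieb and Brauer branching produces cross terms and the paper must run an induction over weights (and, for Brauer, over the defect $i$) there; you would also still need to address the injectivity and surjectivity conditions of \autoref{def:repstab}, which in the paper come out of the same stability-degree bounds.
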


Categories of Brauer diagrams and Temperley--Lieb diagrams have been studied before. For example, Sam--Snowden study the representation theory of their upwards Brauer category \cite{SSStabilityPatterns} and the (whole) Brauer category \cite{LZBr,SSBr} in the context of Schur--Weyl duality and the Deligne category. These categories behave quite differently from the category $\CBr$ that we use to investigate stability phenomena of representations of Brauer algebras. For sequences of representations of Temperley--Lieb algebras, Sitaraman \cite{Sitaraman} phrases representation stability in terms of modules over his category $\mathrm{LS}$. Although $\mathrm{LS}$ is quite different from our $\CTL$, it captures a very similar stability phenomenon. In particular, the principal sequences of stable representations are analogous. To the best of our knowledge, his results are disjoint from our results and were independently derived. In particular, he only considers the context in which $\delta=1$ so that $\TL_n$ is a monoid algebra, whereas we only consider $\delta$ such that $\TL_n$ is split semisimple, which excludes $\delta =1$.

In work together with Boyd and Hepworth \cite{BHP}, we use a chain complex that is inspired by a construction of Randal-Williams--Wahl \cite{RW} on the symmetric monoidal category $\CBr$.


\vspace{1ex}

\noindent\textbf{Acknowledgements.} I would like to thank my advisor Holger Reich to propose the problem of representation stability for diagram algebras as part of my PhD. I would like to thank Rachael Boyd and Richard Hepworth for contacting me about their progress on homological stability of Temperly-Lieb algebras in \cite{BH}. This inspired me to finish this paper and it inspired our work \cite{BHP} on the homology of Brauer algebras. I would like to thank Reiner Hermann, Steffen Koenig, Jeremy Miller,  Steven Sam, Andrew Snowden, Maithreya Sitaraman, and Catharina Stroppel  for additional helpful conversations.

\section{Stability categories}

In this section, we will generalize the notion of $\FI$-modules from the symmetric groups to the Temperley--Lieb algebras, Brauer algebras, and partition algebras. To avoid repetition we will write $(A_n)_{n\in \mathbb N}$ to mean one of the three sequences $(\TL_n)_{n\in \mathbb N}$, $(\Br_n)_{n\in \mathbb N}$, and $(P_n)_{n\in \mathbb N}$.\footnote{The natural numbers $\mathbb N = \{0,1,2,\dots\}$ in this paper.} It is essential for our consideration that
\[ A_0 \inject A_1 \inject A_2 \inject \dots\]
is a sequence of algebra monomorphisms. More generally, we need the embedding
\begin{equation} A_a \otimes A_b \inject A_{a+b}.\label{eq:embedding}\end{equation}
This can be achieved by putting a diagram on $a$ nodes on top of a diagram on $b$ nodes. This map gives a \emph{left embedding} and \emph{right embedding}
\[ A_a \inject A_a\otimes A_b \inject A_{a+b}\quad\text{and}\quad A_b \inject A_a\otimes A_b \inject A_{a+b}\]
where a diagram $d$ is sent to $d\otimes 1$ and $1\otimes d$ respectively. 

In other words, there is a monoidal category we denote by $\TL$, $\Br$, $P$, respectively or $A$ to mean any of those three, whose objects are the nonnegative integers, the morphisms are given by
\[ \Hom_{A}(m,n) = \begin{cases} A_n &\text{if $m=n$}\\ 0 &\text{if $m\neq n$,}\end{cases}\]
and the underlying monoid is the addition on the nonnegative integers.


For the diagram algebras $\TL_n$, $\Br_n$, and $P_n$ defined over the ring $R$, there is a canonical choice of a ``trivial'' module $R$, on which all invertible diagrams (i.e.\ permutations) act trivially and all other diagrams annihilate $R$. 

\begin{thm}\label{thm:CA}
There are categories $\CTL$, $\CBr$, and  $\CP$ enriched in $R$-modules whose objects are the nonnegative integers and whose morphisms are given by
\begin{gather*}
\Hom_{\CTL}(m,n) = \TL_n \tens[\TL_{n-m}] R\\
\Hom_{\CBr}(m,n) = \Br_n \tens[\Br_{n-m}] R\\
\Hom_{\CP}(m,n) = P_n \tens[P_{n-m}] R
\end{gather*}
where $A_{n-m} \subset A_n$ by the right embedding if $m\le n$ and 
\[\Hom_{\CTL}(m,n) =\Hom_{\CBr}(m,n) =\Hom_{\CP}(m,n) = 0\]
if $m>n$. Denote the category associated to the sequence $(A_n)_{n\in \mathbb N}$ by $\CA$.
\end{thm}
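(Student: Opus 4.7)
The plan is to construct each $\CA$ by specifying a composition law on the stated hom-spaces and then checking the category axioms. Given morphisms $b \otimes s \in A_n \tens[A_{n-m}] R$ and $a \otimes r \in A_m \tens[A_{m-k}] R$ with $k \le m \le n$, I would set
\[
(b \otimes s) \circ (a \otimes r) \;=\; b \cdot \iota_L^{m,n}(a) \,\otimes\, s\,r \;\in\; A_n \tens[A_{n-k}] R,
\]
where $\iota_L^{m,n}\colon A_m \hookrightarrow A_n$ is the left embedding from \eqref{eq:embedding} (placing an $m$-strand diagram on top of $n-m$ identity strands), extended $R$-bilinearly. The identity morphism at $n$ is $1_{A_n} \otimes 1_R$ under the identification $\Hom_{\CA}(n,n) = A_n \tens[A_0] R \cong A_n$, noting that $A_0 = R$ in all three cases.

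The main obstacle, and the real content of the proof, is showing that this formula is well-defined. For each of $A \in \{\TL, \Br, P\}$, well-definedness reduces to three compatibility properties of the left and right embeddings of the monoidal category $A$, all visually transparent from the strand picture: (i) left and right embeddings centralize each other inside $A_n$,
\[ \iota_L^{m,n}(x)\cdot\iota_R^{n-m,n}(y) \;=\; \iota_R^{n-m,n}(y)\cdot\iota_L^{m,n}(x), \]
since they act on disjoint sets of strands; (ii) the embeddings iterate consistently, so that $\iota_L^{n,p}\circ\iota_L^{m,n} = \iota_L^{m,p}$, analogously for $\iota_R$, and the mixed square
\[ \iota_L^{m,n}\circ\iota_R^{m-k,m} \;=\; \iota_R^{n-k,n}\circ\iota_L^{m-k,n-k} \]
commutes; and (iii) because juxtaposition with identity strands sends permutation diagrams to permutations (and non-permutations to non-permutations), the trivial $A_{n-k}$-module structure on $R$ restricts along $\iota_R^{n-m,n-k}$ and $\iota_L^{m-k,n-k}$ to the specified trivial module structures over $A_{n-m}$ and $A_{m-k}$. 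I would verify (i)--(iii) directly in the largest case $P$; the statements for $\Br$ and $\TL$ follow because these are subalgebras of $P$ and all embeddings in sight are given uniformly by juxtaposition of diagrams.

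Granted these compatibilities, well-definedness becomes a short manipulation. Absorbing a relation $c \in A_{n-m}$ from the left factor (replacing $b$ by $b \cdot \iota_R^{n-m,n}(c)$) uses (i) to slide $\iota_R^{n-m,n}(c)$ past $\iota_L^{m,n}(a)$, then (ii) and (iii) to cross the resulting factor over the $A_{n-k}$-balanced tensor and match the $A_{n-m}$-action on $R$. Absorbing a relation $c' \in A_{m-k}$ from the right factor (replacing $a$ by $a \cdot \iota_R^{m-k,m}(c')$) uses the mixed square of (ii) to push $\iota_L^{m,n}(\iota_R^{m-k,m}(c'))$ into the image of $A_{n-k}$, and then (iii) to match the $A_{m-k}$-action on $R$. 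Associativity follows from the compositionality of $\iota_L$ in (ii), and the identity axiom is immediate from $\iota_L^{m,n}$ being a unital ring homomorphism. Finally, $R$-bilinearity of composition, and hence the claimed enrichment in $R$-modules, is built into the formula.
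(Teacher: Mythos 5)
Your construction is correct, and it yields literally the same composition as the paper: tracing an elementary tensor through the paper's chain of canonical maps, $(b\tens[A_{n-m}]s)\otimes(a\tens[A_{m-l}]r)$ is sent first to the balanced tensor over $A_m$, then to $A_n\tens[(A_{n-m}\otimes A_{m-l})](R\otimes R)$, and finally to $b\cdot\iota_L^{m,n}(a)\tens[A_{n-l}]sr$ --- exactly your formula. Where you differ is in how well-definedness is certified. The paper never checks balancedness on representatives; instead it defines composition as the composite $(A_n\tens[A_{n-m}]R)\otimes(A_m\tens[A_{m-l}]R)\twoheadrightarrow(A_n\tens[A_{n-m}]R)\tens[A_m](A_m\tens[A_{m-l}]R)\cong A_n\tens[(A_{n-m}\otimes A_{m-l})](R\otimes R)\twoheadrightarrow A_n\tens[A_{n-l}]R$, so that well-definedness is automatic from the universal property of balanced tensor products and associativity ``is a consequence of the associativity of the tensor product.'' Your compatibilities (i)--(iii) are precisely the facts this packaging silently presupposes: (i) (left and right embeddings centralize each other) is what makes $A_n\tens[A_{n-m}]R$ a right $A_m$-module so that the middle tensor over $A_m$ even makes sense; the iteration and mixed-square identities in (ii) underlie the middle isomorphism; and (iii) (juxtaposition with identity strands preserves both invertibility and noninvertibility of diagrams) is exactly what makes the final surjection well defined, since the trivial $A_{n-l}$-module must restrict to the trivial $(A_{n-m}\otimes A_{m-l})$-module. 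So the trade-off is: the paper's tensorial route is shorter and gets associativity for free, at the cost of leaving the diagrammatic verifications implicit; your elementwise route is longer but self-contained and isolates the combinatorial content, and your reduction of (i)--(iii) to the partition case $P$ is a legitimate economy, since $\TL_n\subset\Br_n\subset P_n$ are spanned by diagrams and all embeddings in sight are uniformly given by juxtaposition. Your treatment of identities via $A_0=R$ and $\End_{\CA}(n)\cong A_n$ matches the paper's closing remark.
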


\begin{proof}
We only have to define the composition $l\to m\to n$ if $l\le m \le n$, as it is zero in all other cases. We define it by
\begin{align*}
& \Hom_{\CA}(m,n)\otimes \Hom_{\CA}(l,m)\\
= & (A_n \tens[A_{n-m}] R) \otimes (A_m \tens[A_{m-l}] R)\\
\twoheadrightarrow & (A_n \tens[A_{n-m}] R) \tens[A_m] (A_m \tens[A_{m-l}] R)\\
\cong & A_n\tens[(A_{n-m} \otimes A_{m-l})] (R \otimes R)\\
\twoheadrightarrow & A_n \tens[A_{n-l}] R = \Hom_{\CA}(l,n).
\end{align*}
Associativity is a consequence of the associativity of the tensor product. Note that $\End_{\CA}(n) \cong A_n$ and that the unit of $A_n$ give the identity morphism $\id_n$.\end{proof}

\begin{prop}
The categories $\CBr$ and $\CP$ are symmetrical monoidal.
\end{prop}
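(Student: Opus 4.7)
The plan is to lift the strict monoidal structure of the underlying category $A$ (with $\Hom_A(m,n)=A_n$ if $m=n$ and $0$ otherwise) to $\CA$, exploiting the fact that $\Br_n$ and $P_n$ each contain the symmetric group $S_n$ as the subgroup of permutation diagrams. This inclusion supplies all the shuffle and swap elements needed for the construction; since $\TL_n$ contains no nontrivial permutations, the same recipe breaks down for $\CTL$, which is consistent with the proposition excluding that case. On objects I take $m\otimes n=m+n$. For morphisms, given representatives $f=[a\otimes 1]\in A_{n_1}\tens[A_{n_1-m_1}] R$ and $g=[b\otimes 1]\in A_{n_2}\tens[A_{n_2-m_2}] R$, I would view $a\otimes b$ inside $A_{n_1+n_2}$ via the embedding (\ref{eq:embedding}) and conjugate by the block permutation $\tau\in S_{n_1+n_2}\subset A_{n_1+n_2}$ that exchanges positions $m_1+1,\dots,n_1$ with positions $n_1+1,\dots,n_1+m_2$, setting
\[ f\otimes g := [\tau\cdot (a\otimes b)\cdot \tau^{-1}\otimes 1] \in A_{n_1+n_2}\tens[A_{n_1+n_2-m_1-m_2}] R. \]

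Next I would define the symmetry $\sigma_{m,n}\colon m+n\to n+m$ to be the block transposition in $S_{m+n}\subset A_{m+n}=\End_{\CA}(m+n)$ that swaps the top $m$ strands with the following $n$ strands. Naturality of $\sigma$ with respect to the above tensor product, the involution identity $\sigma_{n,m}\circ\sigma_{m,n}=\id$, and the hexagon axiom all reduce to standard identities between block permutations in the symmetric group, and so hold inside $\Br_{m+n}$ and $P_{m+n}$ via the inclusion $S_{m+n}\hookrightarrow A_{m+n}$; functoriality of $\otimes$ with respect to composition is a similar direct computation using the coherence of the stacking embedding.

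The hard step will be verifying that $\otimes$ is well-defined on the balanced tensor products $A_{n_i}\tens[A_{n_i-m_i}] R$. The point of the conjugation by $\tau$ is that it carries the subalgebra $A_{n_1-m_1}\otimes A_{n_2-m_2}$, which a priori acts on the non-consecutive set $[m_1+1,n_1]\cup[n_1+m_2+1,n_1+n_2]$, onto the right embedding of $A_{n_1+n_2-m_1-m_2}$ acting on the bottom $n_1+n_2-m_1-m_2$ strands; combined with the fact that conjugation by a permutation preserves the character $\epsilon$ defining the trivial module $R$, this makes the formula descend to the balanced tensor products. The corrective shuffle $\tau$ lives in $\Br_n$ and $P_n$ but not in $\TL_n$, which is ultimately why the construction succeeds only in the two cases asserted.
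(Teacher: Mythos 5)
Your overall strategy---stack representatives via the embedding \eqref{eq:embedding} and correct by a block permutation, which is available in $\Br_n$ and $P_n$ but not in $\TL_n$---is the same as the paper's, and your well-definedness mechanism (the shuffle carries the non-consecutively embedded $A_{n_1-m_1}\otimes A_{n_2-m_2}$ onto the right-embedded $A_{n_1+n_2-m_1-m_2}$, with invertibility of diagrams preserved) is exactly the content of the paper's commuting square involving the map $\alpha$. But your defining formula is wrong: you conjugate, setting $f\otimes g=[\tau\cdot(a\otimes b)\cdot\tau^{-1}\otimes 1]$, whereas the correction must act on one side only. In a representative $a\otimes b\in A_{n_1+n_2}$ the target strands are already in the correct order (the $n_1$ outputs of $f$ above the $n_2$ outputs of $g$); it is only the source side that is interleaved, because the $m_2$ genuine inputs of $g$ sit below the $n_1-m_1$ phantom strands of $f$. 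Since composition in $\CA$ is multiplication in $A_n$, the left factor $\tau$ in a conjugation permutes the \emph{output} side of the diagram and thereby scrambles strands that were already correctly placed. The paper instead takes the one-sided correction: $f+g=(\bar f+\bar g)\circ(\id_{a}+s_{c,b}+\id_{d})\circ\phi_{a+c,a+c+b+d}$, i.e.\ the representative is $(\bar f\otimes\bar g)\cdot\sigma$ with $\sigma=\id_{m_1}+s_{m_2,\,n_1-m_1}+\id_{n_2-m_2}$ acting on the source side alone.

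The defect is not cosmetic. Take $\phi=\phi_{0,1}=[1_{A_1}\otimes 1]\colon 0\to 1$. Your formula gives $\phi\otimes\id_1=[\tau\cdot 1_{A_2}\cdot\tau^{-1}\otimes 1]=[1_{A_2}\otimes 1]$, and likewise $\id_1\otimes\phi=[1_{A_2}\otimes 1]$ (there $\tau=\id$), so the two coincide. But in the diagram basis of \autoref{prop:HomBr} for $\Hom_{\CBr}(1,2)=\Br_2\tens[\Br_1]\mathbb C$, the diagram $1_{A_2}\otimes 1$ (input $1$ joined to output $1$, output $2$ joined to the blob) and $s_{1,1}\otimes 1$ (input $1$ joined to output $2$, output $1$ joined to the blob) are distinct basis elements, and the correct value of $\phi\otimes\id_1$ is the latter. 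Consequently naturality of your block-swap symmetry already fails on this square:
\[ s_{1,1}\circ(\phi\otimes\id_1)=[s_{1,1}\otimes 1]\ \neq\ [1_{A_2}\otimes 1]=(\id_1\otimes\phi)\circ s_{0,1}, \]
and one can similarly break the interchange law for your $\otimes$ using the cup--cap diagram in $\Br_2$, so your assignment is not even a bifunctor. The repair is exactly the paper's formula: replace conjugation by right multiplication with the shuffle $\sigma$ above; your descent argument then transfers verbatim, since $\sigma^{-1}(u\otimes v)\sigma$ lies in the right-embedded $A_{n_1+n_2-m_1-m_2}$ and acts on $R$ by $\epsilon(u)\epsilon(v)$---which is precisely how the paper verifies well-definedness.
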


\begin{proof}
For this proof, let $(A_n)_{n\in \mathbb N}$ only stand for $(\Br_n)_{n\in \mathbb N}$ and $(P_n)_{n\in \mathbb N}$. Let $s_{a,b}\colon a+b \to b+a = a+b$ be given by the permutation (diagram)
\[s_{a,b}(x) = \begin{cases} x+b &\text{if $x\le a$}\\ x-a & \text{if $x>a$.}\end{cases}\]
Note that $s$ is a symmetry of the monoidal category $A$. We will prove that $\CA$ is symmetric monoidal with the same underlying monoid (the addition of nonnegative numbers) and the same symmetry as $A$.

Given two maps $f\colon a\to a+b$ and $g\colon c \to c+d$, we need to define $f+g\colon a+c \to a+b+c+d$, show that this map is functorial and prove that the symmetry given behaves natrually with the monoidal structure on $\CA$, i.e. \[(g+f)\circ s_{a,c} = s_{a+b,c+d}\circ (f+g).\] All other properties of a symmetric monoidal structure like the hexagon identity carry over from $A$ directly. 

We will prove all these assertions only for the basis elements $f= \bar f \otimes 1\in A_{a+b}\otimes_{A_b} R$ with $\bar f\in A_{a+b}$ a diagram and $g= \bar g \otimes 1\in A_{c+d}\otimes_{A_d} R$ with $\bar g\in A_{c+d}$ a diagram. Then we extend to the general case linearly.

First let $\phi_{m,n}\colon m \to n$ be defined by $\id_n\otimes 1\in A_{n}\otimes_{A_{n-m}} R = \Hom_{\CA}(m,n)$. We then define $f+g\colon a+c \to a+b+c+d$ with $f,g$ from above by the composition
\[\xymatrix{ a+c \ar[d]_{\phi_{a+c,a+c+b+d}} \ar@{-->}[rr]^{f+g}&&a+b+c+d\\
a+c+b+d \ar[rr]^{\id_a+s_{c,b}+\id_d} &&a+b+c+d \ar[u]_{\bar f + \bar g}. }\]
We observe that our construction is well defined because
\[ \xymatrix{
a+c \ar[r]\ar[rd]_\alpha &a+c+b+d \ar[r]\ar[d]^{{\id}+x+y} &a+b+c+d \ar[r]^{\bar fx+ \bar gy} \ar[d]_{{\id}+x+{\id}+y} & a+b+c+d\\
& a+c+b+d \ar[r]  & a+b+c+d \ar[ru]_{\bar f+ \bar g}
}\]
commutes where
\[ \alpha = \begin{cases} \phi_{a+c, a+c+b+d} &\text{if $x\in A_b$ and $y\in A_d$ are invertible diagrams}\\ 0&\text{if $x\in A_b$ and $y\in A_d$ are noninvertible diagrams.}\end{cases}\]

Together with the two commuting diagrams
\[\xymatrix{
a \ar[r]^{\phi}\ar[rd]_{\phi} & a+b+c \ar[d]^{{\id}+s_{b,c}}\\
& a+c+b
}\quad\xymatrix{
a+b\ar[r]^{\phi} \ar[d]^{s_{a,b}} & a+b+c \ar[d]^{s_{a,b} + \id}\\
b+a\ar[r]^{\phi} & b+a+c
}\]
One can easily prove functoriality of the defined monoidal structure. (Although it involves a quite large diagram.)

Showing naturality, we take advantage of $s_{a,b} = s_{b,a}^{-1}$. From the outer square of the following commutative diagram we derive that the symmetry is natural.
\[\xymatrix{
a+c \ar[rrr]^{f+g}\ar[dddd]_{s_{a,c}}\ar[rd]^\phi &&& a+b+c+d\ar[dddd]^{s_{a+b,c+d}}\\
& a+c+b+d \ar[d] & a+b+c+d \ar[l]\ar[dd]\ar[ld]\ar[ru]^{\bar f+\bar g}\\
& c+a+b+d\ar[d]\ar[rd]\\
& c+a+d+b \ar[r] &  c+d+a+b\ar[rd]^{\bar g+\bar f}\\
c+a \ar[ru]^\phi\ar[rrr]_{g+f} &&& c+d+a+b
}\]
\end{proof}

It turns out that $\Hom_{\CA}(m,n)$ is a free $R$-module. In the following propositions, we describe a basis in terms of diagrams. Let us start with the Brauer algebras.

\begin{prop}\label{prop:HomBr}
$\Hom_{\CBr}(m,n) = \Br_n \otimes_{\Br_{n-m}} R$ is a free $R$-module and the set of all diagrams with $n$ dots on the LHS and $m$ dots and an ``$(n-m)$-blob'' on top of the $m$ dots on the RHS such that every dot is connected to exactly on other dot or to the blob and the blob is connected to exactly $(n-m)$ dots forms a basis. (See  \autoref{fig:Ex Br Hom(2,5)} for examples.)
\end{prop}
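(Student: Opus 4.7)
The right-hand side $\Br_n \otimes_{\Br_{n-m}} R$ is the quotient of the free $R$-module $\Br_n$ by the right-multiplication relations $x\sigma \sim x$ for all $\sigma \in \Sigma_{n-m}$ (permutations act as the identity on $R$) and $xe \sim 0$ for all non-invertible $e \in \Br_{n-m}$. The embedded $\Br_{n-m}$ acts only on a designated set of $n-m$ right-hand nodes of $\Br_n$, which I will call the \emph{blob nodes}. Call a Brauer diagram $d \in \Br_n$ \emph{good} if no two of its blob nodes are connected to each other in $d$, and \emph{bad} otherwise. Collapsing the blob nodes of a good diagram to a single blob puts $\Sigma_{n-m}$-orbits of good diagrams in bijection with the blob diagrams described in the statement, so the task reduces to showing that bad diagrams vanish in $\Hom_{\CBr}(m,n)$ while distinct blob diagrams remain linearly independent.

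The crucial step is the vanishing of bad diagrams. Suppose $d$ is bad with blob nodes $i, j$ connected in $d$, and let $u_{ij} \in \Br_{n-m}$ denote the (non-invertible) ``U-element'' with a cup at left nodes $(i,j)$, a cap at right nodes $(i,j)$, and identity on the remaining strands. I will factor $d = d_0 \cdot u_{ij}$ in $\Br_n$, forcing $d \otimes 1 = d_0 \otimes (u_{ij} \cdot 1) = 0$. Choose any edge $\{P, Q\}$ of $d$ other than the cap $\{i^R, j^R\}$ (available since $d$ has $n \geq n-m \geq 2$ edges) and let $d_0$ be obtained from $d$ by replacing the two edges $\{i^R, j^R\}$ and $\{P, Q\}$ with the two edges $\{P, i^R\}$ and $\{Q, j^R\}$. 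Tracing the multiplication rule for Brauer algebras, the cup of $u_{ij}$ reconnects $P$ and $Q$ in $d_0 u_{ij}$ while the cap of $u_{ij}$ reinstalls the edge $\{i^R, j^R\}$, verifying $d_0 u_{ij} = d$ (the case where $P$ or $Q$ is itself a blob node is handled identically).

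With this in hand, I set up mutually inverse $R$-module maps between the free $R$-module $V$ on blob diagrams and $\Hom_{\CBr}(m, n)$. Send $D \in V$ to $d \otimes 1$ for any good lift $d$; this is well-defined because any two good lifts differ by a right action of some $\sigma \in \Sigma_{n-m}$, and $\sigma \cdot 1 = 1$ in $R$. Conversely, send each good Brauer diagram to its blob-collapse in $V$ and each bad one to $0$, extended $R$-linearly; this descends to $\Hom_{\CBr}(m,n)$ because right multiplication by a permutation $\sigma \in \Sigma_{n-m}$ only relabels blob nodes of a good diagram (preserving its collapse), while any non-invertible $e \in \Br_{n-m}$ has a cap that, after right multiplication, surfaces as a cap between two blob nodes of $de$ (making $de$ a scalar multiple of a bad diagram, whence its image is $0$). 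The two compositions are visibly the identity, so the blob diagrams form an $R$-basis. The main obstacle is the explicit factorization $d = d_0 \cdot u_{ij}$; everything else is formal bookkeeping about the trivial-module relations.
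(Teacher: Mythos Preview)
Your proof is correct and follows essentially the same approach as the paper: both arguments kill the bad diagrams by factoring $d = d_0 \cdot u_{ij}$ through a non-invertible element of $\Br_{n-m}$, and then identify the $\Sigma_{n-m}$-orbits of good diagrams with the blob diagrams. The only cosmetic differences are that the paper chooses the auxiliary edge $\{P,Q\}$ specifically to be a left--left cup (which exists because a right--right cap forces one), and it packages the second half as passing through the quotient $\Br_n/J \otimes_{RS_{n-m}} R$ rather than writing down explicit inverse maps.
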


\begin{figure}[h!]
\centering
\[
\begin{tikzpicture}[x=1.5cm,y=-.5cm,baseline=-1.05cm]

\node[v] (a1) at (0,0) {};
\node[v] (a2) at (0,1) {};
\node[v] (a3) at (0,2) {};
\node[v] (a4) at (0,3) {};
\node[v] (a5) at (0,4) {};

\node[b] (b1) at (1,1) {$3$};
\node[v] (b4) at (1,3) {};
\node[v] (b5) at (1,4) {};

\draw[e] (a1) to[out=0, in=180] (b1);
\draw[e] (a2) to[out=0, in=0]   (a4);
\draw[e] (b1) to[out=180, in=180] (b4);
\draw[e] (a5) to[out=0, in=0] (a3);
\draw[e] (b1) to[out=180,in=180] (b5);

\end{tikzpicture}
\quad,\quad
\begin{tikzpicture}[x=1.5cm,y=-.5cm,baseline=-1.05cm]

\node[v] (a1) at (0,0) {};
\node[v] (a2) at (0,1) {};
\node[v] (a3) at (0,2) {};
\node[v] (a4) at (0,3) {};
\node[v] (a5) at (0,4) {};

\node[b] (b1) at (1,1) {$3$};
\node[v] (b4) at (1,3) {};
\node[v] (b5) at (1,4) {};

\draw[e] (a1) to[out=0, in=180] (b1);
\draw[e] (a2) to[out=0, in=180]   (b1);
\draw[e] (b1) to[out=180, in=0] (a4);
\draw[e] (a5) to[out=0, in=0] (a3);
\draw[e] (b4) to[out=180,in=180] (b5);

\end{tikzpicture}
\quad
\in \Hom_{\CBr}(2,5),
\qquad
\begin{tikzpicture}[x=1.5cm,y=-.5cm,baseline=-1.05cm]

\node[v] (a1) at (0,0) {};
\node[v] (a2) at (0,1) {};
\node[v] (a3) at (0,2) {};
\node[v] (a4) at (0,3) {};
\node[v] (a5) at (0,4) {};

\node[b] (b1) at (1,1) {$3$};
\node[v] (b4) at (1,3) {};
\node[v] (b5) at (1,4) {};

\draw[e] (a1) to[out=0, in=180] (b1);
\draw[e] (a2) to[out=0, in=0]   (a4);
\draw[e] (b1) to[out=210, in=270] (.5,1);
\draw[e] (b1) to[out=150, in=90] (.5,1);
\draw[e] (a5) to[out=0, in=0] (a3);
\draw[e] (b4) to[out=180,in=180] (b5);

\end{tikzpicture}
\quad
\not\in \Hom_{\CBr}(2,5)\]
\caption{Two examples and one non-example for the described diagrams}
\label{fig:Ex Br Hom(2,5)}
\end{figure}
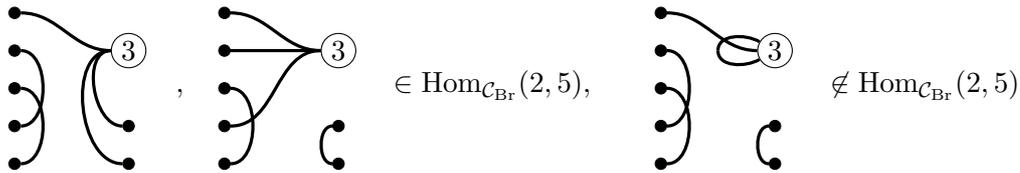

\begin{proof}
Define $J \subset \Br_n$ to be the free $R$-submodule generated by diagrams $b\in \Br_n$ that has a connection between two dots $m+1\le i,j\le n$. Note that $J$ is invariant under right multiplication by $\Br_{n-m}$.

We first observe that the image of $J\otimes_{\Br_{n-m}} R \to \Br_n \otimes_{\Br_{n-m}} R$ is zero. To do so, note that we can write every diagram $b\in J$ as $b = b'b''$, where $b'\in \Br_n$ and $b''\in \Br_{n-m} \subset \Br_n$ are diagrams and $b''$ is not invertible. This can be achieved by choosing a right-to-right connection $(i,j)$ with $m+1\le i,j \le n$ and a (thus existent) left-to-left connection $(i',j')$. Let $b'$ be the diagram in which we have the left-to-right connections $(i',i)$ and $(j',j)$ and all other connections the same as in $b$. Let $b''$ be the diagram that has the right-to-right connection $(i,j)$ and the left-to-left connection $(-i,-j)$ and all other connections are left-to-right $(-k,k)$.

Using right-exactness of the tensor product, we conclude that $\Br_n \otimes_{\Br_{n-m}} R \cong \Br_n/J \otimes_{\Br_{n-m}} R$. 

Next, define $I\subset \Br_{n-m}$ to be the free $R$-submodule generated by all noninvertible diagrams $b\in \Br_{n-m}$. $I$ annihilates both $R$ and $\Br_n/J$. That implies that the action of $\Br_{n-m}$ on $R$ and $\Br_n/J$ factors through $\Br_{n-m}/I \cong RS_{n-m}$ and thus $\Br_n/J \otimes_{\Br_{n-m}} R \cong \Br_n/J \otimes_{RS_{n-m}} R$.

Note that the images of diagrams $b\in\Br_n$ that have no connection between two dots $m+1\le i,j\le n$ form a basis of $\Br_n/J$. Because the symmetric group $S_{n-m}$ permutes the basis elements, $\Br_n/J \otimes_{RS_{n-m}} R$ is a free $R$-module and its basis is given by the orbits of the $S_{n-m}$-action on the basis of diagrams of $\Br_n/J$. This description coincides with the diagrammatic description asserted.
\end{proof}

For the Temperley--Lieb algebras we may simply restrict to those diagrams that have nonintersecting lines.

\begin{prop}
$\Hom_{\CTL}(m,n) = \TL_n \otimes_{\TL_{n-m}} R$ is a free $R$-module and the set of all diagrams with $n$ dots on the LHS and $m$ dots and an ``$(n-m)$-blob'' on top of the $m$ dots on the RHS such that every dot is connected to exactly on other dot or to the blob, the blob is connected to exactly $(n-m)$ dots, and non of the the connections intersect forms a basis.\end{prop}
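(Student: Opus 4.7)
The plan is to follow the proof of \autoref{prop:HomBr} with adaptations for the non-crossing constraint. Define $J \subseteq \TL_n$ as the $R$-submodule spanned by Temperley--Lieb diagrams having at least one right-to-right connection between two dots $m+1 \le i,j \le n$; as in the Brauer case, $J$ is a right $\TL_{n-m}$-submodule. The goal is to establish $\TL_n \otimes_{\TL_{n-m}} R \cong \TL_n/J$, whose basis of diagrams without right-to-right pairs among the last $n-m$ dots corresponds, by collapsing those dots into a single blob, to the claimed diagrammatic basis.

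The first step---showing that the image of $J$ in $\TL_n \otimes_{\TL_{n-m}} R$ is zero---is the main obstacle, because the Brauer-style factorization $b = b' b''$ (swapping a right-cap and a left-cup for two left-to-right strands) can introduce crossings in $b'$. Instead, for a diagram $d \in J$, pick an innermost right-to-right cap of $d$; non-crossing forces it to sit at adjacent positions $(j,j+1)$ with $m+1 \le j \le n-1$. Assume $j \ge 2$ (else use the symmetric construction on the position $j+2$). Let $p$ denote the partner of $R_{j-1}$ in $d$. Construct $b \in \TL_n$ by modifying $d$ locally: replace the two pairs $\{R_j,R_{j+1}\}$ and $\{R_{j-1},p\}$ by $\{R_{j-1},R_j\}$ and $\{R_{j+1},p\}$, leaving all other connections of $d$ unchanged. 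A local isotopy near the three positions $R_{j-1}, R_j, R_{j+1}$ shows that $b$ is still non-crossing, and a direct track-tracing through $b$ and $e_j$ verifies $b \cdot e_j = d$ (without a $\delta$-loop), where $e_j \in \TL_{n-m}$ is the generator under the right embedding. Therefore
\[
d \otimes 1 = (b \cdot e_j) \otimes 1 = b \otimes (e_j \cdot 1) = 0
\]
in $\TL_n \otimes_{\TL_{n-m}} R$, since $e_j$ acts as zero on the trivial module $R$.

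The remainder of the argument parallels the Brauer case. Let $I \subseteq \TL_{n-m}$ be the $R$-span of non-identity Temperley--Lieb diagrams. Because the identity is the only invertible diagram in $\TL_{n-m}$, one has $\TL_{n-m}/I \cong R$ as $R$-algebras, and $I$ annihilates both $R$ (trivially) and $\TL_n/J$ (any non-identity TL diagram carries a cap on the right, which transfers to a right-to-right pair of $b \cdot c$ for $b \in \TL_n$, $c \in I$). Hence the $\TL_{n-m}$-action on $\TL_n/J$ factors through $\TL_{n-m}/I \cong R$, yielding $\TL_n \otimes_{\TL_{n-m}} R \cong \TL_n/J$. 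Finally, the resulting $R$-basis of TL diagrams without right-to-right pairs among the last $n-m$ dots collapses canonically to the described blob diagrams: the non-crossing condition pins down the cyclic order in which the blob's $n-m$ neighbors appear around it, making the collapsing procedure well-defined.
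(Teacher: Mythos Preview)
Your proof is correct and follows the same outline as the paper's Brauer argument (\autoref{prop:HomBr}), which is exactly what the paper intends: its entire ``proof'' is the one-line remark preceding the proposition that one may simply restrict to nonintersecting diagrams. You are right, however, that the Brauer factorization $b=b'b''$ does not restrict verbatim---the diagram $b''$ with a cap and cup at nonadjacent positions $(i,j)$ is not planar, and even for adjacent $(j,j+1)$ the companion $b'$ built from an arbitrary left--left cup can acquire crossings. Your replacement, sliding the innermost cap down by one step and factoring through the generator $e_j\in\TL_{n-m}$, is the clean fix; the local isotopy argument for planarity of $b$ and the trace verifying $b\cdot e_j=d$ are both sound. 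The remaining simplification you note---that $\TL_{n-m}/I\cong R$ rather than $RS_{n-m}$, so no orbit quotient is needed and $\TL_n\otimes_{\TL_{n-m}}R\cong\TL_n/J$ directly---is also correct, and the final bijection with blob diagrams is forced by planarity as you say. One cosmetic point: the hypothesis ``$j\ge 2$'' is automatic once $m\ge 1$ (since $j\ge m+1$), and the case $m=0$ is trivial, so the parenthetical about $j+2$ can be dropped.
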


For the partition algebras we must adjust the proof of  \autoref{prop:HomBr} slightly.

\begin{prop}\label{prop:HomP}
$\Hom_{\CP}(m,n) = P_n \otimes_{P_{n-m}} R$ is a free $R$-module and the set of all partitions of $[-n] \cup [m]$ with $(n-m)$ marked blocks forms a basis.
\end{prop}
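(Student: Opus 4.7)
The plan is to follow the three-step pattern of the proof of \autoref{prop:HomBr}, but with a strictly larger submodule $J\subset P_n$ because a block of a partition can contain more than two elements. Let $J\subset P_n$ be the $R$-submodule spanned by partitions $p$ having some block $B$ that either (a) meets $\{m+1,\dots,n\}$ in two or more elements, or (b) lies entirely in $\{m+1,\dots,n\}$. The first task is to show $p\otimes 1=0$ for every basis element of $J$ by writing $p=p'\cdot a$ with $a\in P_{n-m}$ non-invertible (right-embedded), in the same spirit as \autoref{prop:HomBr}. In case (a), letting $\{j_1,\dots,j_k\}=B\cap\{m+1,\dots,n\}$ with $k\ge 2$, take $a$ to consist of the single non-trivial block $\{-j_1,\dots,-j_k,j_1,\dots,j_k\}$ together with identity pass-throughs on the remaining bottom positions; a direct diagrammatic check gives $p\cdot a=p$ (with no $\delta$-factor, since the merged block has external content on the right), and $a$ is non-invertible since this block has $2k\ge 4$ elements. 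Case (b) with $\lvert B\rvert\ge 2$ reduces to (a). The delicate sub-case is (b) with $B=\{j_0\}$ a singleton: choose any other block $B''$ of $p$ (which exists since the left-side elements live somewhere), let $p'$ be $p$ with $\{j_0\}$ absorbed into $B''$, and let $a$ consist of the singletons $\{-j_0\}$ and $\{j_0\}$ at position $j_0$ with identity pass-through elsewhere. Then $p'\cdot a=p$ with no $\delta$-factor (the block absorbing the middle position $j_0$ also contains the nonempty $B''$, hence has external content on the left or right), and $a$ is non-invertible.

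For the second step, I would show the reverse inclusion $P_n\cdot I\subseteq J$ where $I\subset P_{n-m}$ is the non-invertible ideal, which is the content that turns the quotient $P_n/J$ into a $P_{n-m}/I\cong RS_{n-m}$-module. The combinatorial heart is that any non-invertible $a\in P_{n-m}$ must contain either a block with at least two right-side elements or a pure-right block (one with no left element): if neither held, each right element would occupy its own block paired with at least one left, and matching the $n-m$ left elements to the $n-m$ right elements in distinct blocks would force $a$ to be a permutation. In the first case, the two right elements of that block of $a$ stay together in some block of $q\cdot a$, producing a block with $\ge 2$ elements of $\{m+1,\dots,n\}$ on the right of $q\cdot a$; in the second, the pure-right block of $a$ survives intact as a block of $q\cdot a$ contained entirely in $\{m+1,\dots,n\}$. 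Either way $q\cdot a\in J$.

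Combining both inclusions gives $J=P_n\cdot I$, which yields
\[ P_n\otimes_{P_{n-m}}R \;\cong\; (P_n/J)\otimes_{P_{n-m}/I}R \;\cong\; (P_n/J)\otimes_{RS_{n-m}}R \]
by the same argument as in \autoref{prop:HomBr}. An $R$-basis of $P_n/J$ is given by partitions in which each element of $\{m+1,\dots,n\}$ sits in its own block containing at least one other element, and the $S_{n-m}$-action permuting $\{m+1,\dots,n\}$ on this basis is free because a stabilizer would have to fix each of the $n-m$ labels individually (they lie in $n-m$ distinct blocks). The $S_{n-m}$-orbits therefore form an $R$-basis of the tensor product, and forgetting the labels of $\{m+1,\dots,n\}$ while recording which blocks contain one of them yields precisely a partition of $[-n]\cup[m]$ together with a choice of $n-m$ marked blocks. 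The main obstacle is the singleton sub-case of Step 1: the merging construction from case (a) degenerates to an invertible element when $\lvert B\rvert=1$, and one must use a different decomposition $p=p'\cdot a$ while carefully verifying that no $\delta$-factor appears in the composition.
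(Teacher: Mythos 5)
Your proof is correct, and its skeleton is the same as the paper's proof of this proposition: you use the same submodule $J$ (your case (b) with $\lvert B\rvert\ge 2$ is subsumed by your case (a), so your spanning set coincides with the paper's ``two elements in $\{m+1,\dots,n\}$ or a range singleton'' condition), the same reduction $P_n\otimes_{P_{n-m}}R\cong (P_n/J)\otimes_{RS_{n-m}}R$ via right-exactness, and the same orbit count at the end. Where you genuinely diverge is the factorization $p=p'a$. The paper splits into cases according to whether the dots of $S=B\cap\{m+1,\dots,n\}$ are connected to a dot $k\le m$, and in the disconnected case it must ``pick another block $T$ that doesn't intersect $\{m+1,\dots,n\}$''; you instead apply the single-block construction $a=\{-j_1,\dots,-j_k,j_1,\dots,j_k\}$ with $p'=p$ uniformly whenever $k\ge 2$ (correctly observing that the connectivity hypothesis is never needed, since the relevant component always has external contact on the far right), reserving the merging trick for singletons and merging into an \emph{arbitrary} other block. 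This buys real robustness: the paper's auxiliary block $T$ need not exist --- e.g.\ for $\{\{-1,-2,1\},\{2\}\}\in P_2$ with $m=0$, every block meets $\{1,2\}$ --- so your uniform treatment actually closes a small gap in the paper's case analysis. You also prove explicitly the dichotomy showing $P_n\cdot I\subseteq J$, a step the paper asserts without proof in the sentence ``$I$ annihilates both $R$ and $P_n/J$''. One minor remark: your freeness claim for the $S_{n-m}$-action is true (and your stabilizer argument is fine), but it is not needed --- the coinvariants of any permutation module are free on the orbits regardless of stabilizers --- so that sentence could be dropped without loss.
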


\begin{rem}
One may view the $(n-m)$ marked blocks as connected to an $(n-m)$-blob but the blob does not connect the different blocks connected to it. (And non of the blocks connect to the blob can be empty.)
\end{rem}

\begin{proof}[Proof of  \autoref{prop:HomP}]
Let $J\subset P_n$ be the free $R$-submodule generated by partitions that contain a block that contains two elements $m+1\le i,j \le n$ or is a singleton $\{i\}$ with $m+1 \le i \le n$. 

As in the proof of  \autoref{prop:HomBr}, we will observe that the image of $J\otimes_{P_{n-m}} R \to P_n \otimes_{P_{n-m}} R$ is zero. We will write every partition $b\in J$ as a product $b = b'b''$ where $b'\in P_n$ and $b''\in P_{n-m}$ are both diagrams and $b''$ is not invertible. Let $b\in J$ be a partition and let $S$ be the intersection of a block and $\{m+1, \dots, n\}$ such that either $S$ is a singleton block or $S$ contains at least two elements. 

If the dot(s) in $S$ are not connected to at least one dot $k\le m$, pick another block $T$ in $b$ that doesn't intersect $\{m+1, \dots, n\}$. Let $b'$ be the same partition as $b$ except that $S$ and $T$ are joined to be one block and let $b''$ be the partition with blocks $S$ and $-S = \{ -l \mid l \in S\}$ and blocks $\{l,-l\}$ for all $l \not\in S$.

If the dots in $S$ are connected to at least one dot $k\le m$, then let $b'=b$ and let $b''$ be the partition with one block $S\cup -S$ and blocks $\{-l,l\}$ for all $l \not \in S$.

Using right-exactness of the tensor product, we conclude that $P_n \otimes_{P_{n-m}} R \cong P_n/J \otimes_{P_{n-m}} R$. 

Next, define $I\subset P_{n-m}$ to be the free $R$-submodule generated by all noninvertible diagrams $b\in P_{n-m}$. $I$ annihilates both $R$ and $P_n/J$. That implies that the action of $P_{n-m}$ on $R$ and $P_n/J$ factors through $P_{n-m}/I \cong RS_{n-m}$ and thus $P_n/J \otimes_{P_{n-m}} R \cong P_n/J \otimes_{RS_{n-m}} R$.

Note that the images of diagrams $b\in P_n$ that have no two connected dots $m+1\le i,j\le n$ and no singleton blocks $\{i\}$ with $m+1\le i\le n$ form a basis of $P_n/J$. Because the symmetric group $S_{n-m}$ permutes the basis elements, $P_n/J \otimes_{RS_{n-m}} R$ is a free $R$-module and its basis is given by the orbits of the $S_{n-m}$-action on the basis of diagrams of $P_n/J$. This description coincides with the diagrammatic description asserted.
\end{proof}

\begin{Def}
We call a linear functor $V\colon \CA \to \xmod{R}$  a \emph{$\CA$-module}. We denote $\Fun_{\mathrm{lin}}(\CA,\xmod{R})$ by $ \CAmod$.
\end{Def}

\begin{lem}\label{lem:Cmod criterion}
Let $(V_n, \phi_n)_{n \in \mathbb N}$ be a sequence of $A_n$-modules $V_n$ and $A_n$-homomorphisms $\phi_n \colon V_n \to V_{n+1}$. 

There is a (unique) $\CA$-module $V$ with $V(n) = V_n$ and $V(1_{A_{n+1}}\otimes_{A_1} 1_{R})  = \phi_n$ if and only if for all $m \le n$, diagrams $d\in A_{n-m}\subset A_n$ by right embedding, and $v\in V_m$
\[d\phi_{m,n} = \begin{cases} \phi_{m,n} &\text{if $d$ is an invertible diagram}\\ 0 &\text{if $d$ is a noninvertible diagram.}\end{cases}\]
\end{lem}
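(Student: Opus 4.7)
The plan is to extract the $\CA$-module structure from the data $(V_n,\phi_n)$, using that $\Hom_{\CA}(m,n)=A_n\tens[A_{n-m}]R$ is generated as a left $A_n$-module by the single element $1_{A_n}\otimes 1$, and (by the composition formula in the proof of \autoref{thm:CA}) that this element equals the composite of the canonical arrows $m\to m+1\to\cdots\to n$ associated to the $\phi_i$.

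For the \emph{only if} direction, suppose such a $V$ exists. Every morphism in $\Hom_{\CA}(m,n)$ has the form $a\otimes 1$ with $a\in A_n$ acting from the left, so $V$ is fully determined by the values $V(1_{A_{n+1}}\otimes 1)=\phi_n$ together with the $A_n=\End_{\CA}(n)$-actions on each $V_n$; this yields uniqueness. For the displayed condition: working in $A_n\tens[A_{n-m}]R$, for $d\in A_{n-m}$ right-embedded into $A_n$,
\[d\otimes 1=1\otimes (d\cdot 1),\]
which equals $1\otimes 1$ when $d$ is invertible (since $R$ is the trivial module, $d\cdot 1=1$) and equals $0$ when $d$ is noninvertible ($d\cdot 1=0$). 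Applying $V$ translates this to $d\,\phi_{m,n}=\phi_{m,n}$ or $0$, respectively.

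For the \emph{if} direction, given the hypothesis, set $\phi_{m,n}:=\phi_{n-1}\circ\cdots\circ\phi_m\colon V_m\to V_n$ and define $V$ on morphisms by $V(a\otimes 1)(v):=a\cdot\phi_{m,n}(v)$, extended $R$-linearly. Well-definedness on the quotient $A_n\tens[A_{n-m}]R$ amounts to the identity $a\,d\,\phi_{m,n}(v)=a\,(d\cdot 1)\,\phi_{m,n}(v)$ for all $d\in A_{n-m}$; splitting into the invertible and noninvertible cases, this is precisely the hypothesis of the lemma. Functoriality $V((a\otimes 1)\circ(b\otimes 1))=V(a\otimes 1)\circ V(b\otimes 1)$ unpacks via the composition formula of \autoref{thm:CA} to the identity $a\,\iota(b)\,\phi_{l,n}(v)=a\,\phi_{m,n}(b\,\phi_{l,m}(v))$, with $\iota\colon A_m\hookrightarrow A_n$ the left embedding. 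Using $\phi_{l,n}=\phi_{m,n}\circ\phi_{l,m}$ together with $A_m$-equivariance of $\phi_{m,n}$ for the left embedding (obtained by iterating the hypothesis that each individual $\phi_k$ is an $A_k$-homomorphism), this reduces to a tautology.

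The main technical point is the well-definedness in the \emph{if} direction: the two-case form of the condition matches precisely the $A_{n-m}$-module structure of the trivial module $R$, and any weaker hypothesis would fail to make $V$ a well-defined functor on $A_n\tens[A_{n-m}]R$. Everything else reduces to bookkeeping via the composition formula of \autoref{thm:CA}.
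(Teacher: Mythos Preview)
Your proof is correct and follows essentially the same approach as the paper's: both directions hinge on the identity $d\otimes 1 = 1\otimes (d\cdot 1)$ in $A_n\tens[A_{n-m}]R$ for $d\in A_{n-m}$, and functoriality reduces to the $A_m$-equivariance of $\phi_{m,n}$ together with $\phi_{l,n}=\phi_{m,n}\circ\phi_{l,m}$. Your write-up is slightly more explicit about uniqueness and about how the composition formula from \autoref{thm:CA} enters, but the substance is the same.
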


\begin{proof}
Assume $V$ is a $\CA$-module. In $\CA$ we have the equation
\[ d \cdot (1_{A_n}\tens[A_{n-m}] 1) = \begin{cases}1_{A_n}\tens[A_{n-m}] 1&\text{if $d$ is an invertible diagram}\\ 0 &\text{if $d$ is a noninvertible diagram.}\end{cases}\]
for every diagram $d\in a_{n-m}$.
Applying $V$, we get
\[d\phi_{m,n} = \begin{cases} \phi_{m,n} &\text{if $d$ is an invertible diagram}\\ 0 &\text{if $d$ is a noninvertible diagram.}\end{cases}\]
This proves the necessity of the condition.

Now assume the condition to be true. We define the functor $V$ by setting \[V(d\tens[{A_{n-m}}] 1)=d\phi_{m,n}\] for diagrams $d\in A_n$. The condition exactly tells us that this is well defined. We only need to check compatibility with composition. Let $d\in A_n$ and $d'\in A_m$be diagrams, then
\[ d\tens[A_{n-m}]1 \circ d'\tens[A_{m-l}]1 = dd' \tens[A_{n-l}] 1.\]
This is compatible with
\[ d\phi_{m,n} \circ d'\phi_{l,m} = dd' \phi_{l,n}.\]
For this note that $\phi_{m,n}$ is an $A_m$-homomorphism.
\end{proof}

\section{Representation stability}

Representation stability is a notion first introduced by Church--Farb \cite{CF} for the symmetric groups and the classical groups but always in a semisimple setting. We first recall the semisimple representation theory of the Temperley--Lieb algebras, the Brauer algebras, and the partition algebras. We then define what we mean by representation stability for $\CA$-modules. Sticking close to the ideas of Church--Ellenberg--Farb \cite{CEF}, we define the stability degree of $\CA$-modules and use it to prove that finitely presented $\CA$-modules are representation stable.

In this section, we will restrict to the base ring $R= \mathbb C$ the complex numbers although most statements can be made for any field of characteristic zero or even more generally.

\subsection{Semisimple representation theory of $\TL_n$, $\Br_n$, $P_n$}

Let us first recall the semisimple representation theory of the Temperley--Lieb algebras.

\begin{thm}[{\cite[Thm.\ 2.8.5]{GdHJ}}]\label{thm:TL repthy}
Let $P_0(X)=1$, $P_1(X)=1$, and $P_{k+1}(X) = P_k(X) -X\cdot P_{k-1}(X)$ define a sequence of polynomials in $\mathbb C[X]$. Let $\delta\in \mathbb C^{\times}$ be nonzero and $P_k(\delta^{-2}) \neq 0 $ for all $k\le n-1$.
Then $\TL_n(\delta,\mathbb C)$ is semisimple and decomposes into full matrix algebras over $\mathbb C$. Let $\Pi_{\TL_n}$ be the partitions of $n$ with at most two parts, then the  nonisomorphic simple $\TL_n$-modules are index over $\Pi_{\TL_n}$ and we denote them by $\TL_n(\lambda)$ for $\lambda\in \Pi_{\TL_n}$. 
\end{thm}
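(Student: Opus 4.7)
The plan is to follow the classical cellular-algebra strategy, essentially as executed in Goodman--de la Harpe--Jones. First I would construct a candidate simple $\TL_n(\lambda)$ for each $\lambda = (n-k,k) \in \Pi_{\TL_n}$ with $0 \le k \le n/2$: take the free $\mathbb C$-module on non-crossing ``half-diagrams'' consisting of $k$ cups and $n-2k$ through-strands on $n$ points, and let $\TL_n$ act by stacking on top, with the convention that any input diagram which would join two through-strands sends the half-diagram to zero, while each closed loop produced during stacking contributes a factor of $\delta$. A direct count gives $\dim \TL_n(\lambda) = \binom{n}{k} - \binom{n}{k-1}$, and the standard Catalan identity yields $\sum_\lambda (\dim \TL_n(\lambda))^2 = C_n = \dim_{\mathbb C}\TL_n$, so any semisimple decomposition with these as simples is forced to be the full Wedderburn decomposition.

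Next I would introduce the cellular bilinear form on $\TL_n(\lambda)$: on a basis pair $(u,v)$ of half-diagrams, reflect $u$, stack with $v$, output $0$ if any through-strand is capped off, and otherwise output $\delta^{c}$ where $c$ is the number of closed loops produced. By the Graham--Lehrer framework for cellular algebras, the Jacobson radical of $\TL_n(\lambda)$ equals the radical of this form, so semisimplicity of $\TL_n$ is equivalent to non-degeneracy of every Gram matrix. The main obstacle --- and the principal technical content cited from GdHJ --- is the explicit computation showing that the determinant of the Gram matrix on $\TL_n(\lambda)$ factors, up to a monomial in $\delta$, as a product of values $P_j(\delta^{-2})$ for $j\le n-1$. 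This is exactly where the Chebyshev-style recursion defining the $P_j$ enters, via the inductive construction of Jones--Wenzl idempotents and the partial trace relations they satisfy.

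Given this factorization, the hypothesis $P_k(\delta^{-2}) \neq 0$ for all $k \le n-1$ makes every Gram determinant nonzero, so each $\TL_n(\lambda)$ is simple, the modules for distinct $\lambda$ are pairwise non-isomorphic (their dimensions are already distinct, and the cell-module formalism makes non-isomorphism automatic), and $\TL_n(\delta,\mathbb C)$ is semisimple. Combined with the Catalan dimension count of the first paragraph, Wedderburn then identifies $\TL_n(\delta,\mathbb C) \cong \bigoplus_{\lambda \in \Pi_{\TL_n}} \End_{\mathbb C}(\TL_n(\lambda))$. In short, the argument is: construct cell modules, reduce semisimplicity to non-degeneracy of Gram forms, compute Gram determinants in terms of the $P_j$, and conclude. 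The hard step is the Gram determinant formula, which I would simply cite rather than reprove.
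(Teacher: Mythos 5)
The paper does not prove this statement at all: it is imported verbatim from Goodman--de la Harpe--Jones \cite[Thm.\ 2.8.5]{GdHJ}, so there is no internal proof to match against, and the only real question is whether your sketch is a sound account of why the cited result holds. It is, but note that your route is not the one GdHJ take. GdHJ (writing in 1989, before Graham--Lehrer's cellular formalism existed) argue by induction on $n$ up the tower of algebras, using conditional expectations and the fundamental construction to build the matrix-algebra decomposition and the Bratteli diagram directly; the polynomials $P_k$ enter there through the inductively defined Jones projections, not through a Gram determinant. Your cellular-algebra argument --- cell modules of dimension $\binom{n}{k}-\binom{n}{k-1}$, the Catalan identity $\sum_k \bigl(\binom{n}{k}-\binom{n}{k-1}\bigr)^2 = C_n$, semisimplicity reduced to nondegeneracy of the Gram forms, and the factorization of the Gram determinants into Chebyshev-type values --- is the now-standard later proof (Westbury, Graham--Lehrer), and it buys a cleaner structural statement (radical equals radical of the form, simples automatically exhausted and pairwise non-isomorphic) at the cost of the Gram determinant computation, which you reasonably cite rather than reprove, just as the paper cites the whole theorem.

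One genuine error in a side remark: your parenthetical claim that the simples are non-isomorphic because ``their dimensions are already distinct'' is false. Already for $n=2$ both simples are one-dimensional, and for $n=6$ one has $\binom{6}{1}-\binom{6}{0}=5=\binom{6}{3}-\binom{6}{2}$. This does not damage the argument, since the correct justification is the one you give in the same breath: in the cellular framework the simple heads of cell modules with nonzero bilinear form are automatically pairwise non-isomorphic for distinct cell indices. Strike the dimension remark and the sketch stands.
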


%

\begin{thm}[Branching rules]\label{thm:TL branching} If $\delta^2 = 2+q+q^{-1}$ for $q\in \mathbb C$ not a root of unity,
\[ [\Res_{\TL_m\otimes \TL_n}^{\TL_{m+n}} \TL_{m+n}(\lambda), \TL_m(\mu)\otimes \TL_n(\nu)] = c_{\mu\nu}^{\lambda}\]
Here $c$ denotes the Littlewood-Richardson coefficients.
\end{thm}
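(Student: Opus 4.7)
The plan is to exploit the classical identification of $\TL_n(\delta)$ with a quotient of the Iwahori--Hecke algebra $H_n(q)$ of type $A_{n-1}$, and to pull the branching formula from the Hecke side. Under the hypothesis that $\delta^2 = 2+q+q^{-1}$ with $q$ not a root of unity, both $H_n(q)$ and $\TL_n$ are semisimple; the simple $H_n(q)$-modules $S^\lambda(q)$ are indexed by all partitions $\lambda \vdash n$, and the surjection $H_n(q)\twoheadrightarrow \TL_n$ kills exactly those matrix blocks with $\ell(\lambda)\ge 3$, inducing an identification $\TL_n(\lambda)\cong S^\lambda(q)$ for $\lambda\in \Pi_{\TL_n}$.

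Next I would invoke the standard parabolic branching rule for semisimple Hecke algebras: for the parabolic embedding $H_m(q)\otimes H_n(q)\hookrightarrow H_{m+n}(q)$ (a $q$-deformation of the Young subgroup embedding $\mathbb C S_m\otimes \mathbb C S_n \hookrightarrow \mathbb C S_{m+n}$), the multiplicity of $S^\mu(q)\otimes S^\nu(q)$ in the restriction of $S^\lambda(q)$ is the Littlewood--Richardson coefficient $c_{\mu\nu}^\lambda$. To transfer this to the TL side, I would verify that the stacking embedding $\TL_m\otimes \TL_n \hookrightarrow \TL_{m+n}$ from \eqref{eq:embedding} is compatible with the parabolic Hecke embedding; this reduces to checking that the Kauffman--Jones generators $U_1,\dots,U_{m-1}$ and $U_{m+1},\dots,U_{m+n-1}$ of $\TL_{m+n}$ correspond exactly to the images of the generators of the two TL factors.

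A crucial combinatorial observation is that when $\ell(\lambda)\le 2$, the Littlewood--Richardson rule forces any $\mu,\nu$ with $c_{\mu\nu}^\lambda\neq 0$ to have at most two rows, because the skew diagram $\lambda/\mu$ must fit inside a two-row shape. Consequently the Hecke-algebra branching formula descends term by term through the quotient $H\twoheadrightarrow \TL$ and yields the claimed decomposition. The main obstacle I expect is the compatibility check between the two embeddings, since the surjection $H_n(q)\twoheadrightarrow \TL_n$ is not manifestly monoidal on the nose. If this proves awkward, an alternative is to work directly with the cellular/Bratteli-tower structure of $\TL_n$: in the semisimple case the cell modules are irreducible, single-step restriction $\TL_{k+1}\downarrow \TL_k$ corresponds to adding or removing a box in a two-row Young diagram, and iterating this recovers the two-row Littlewood--Richardson rule as a count of lattice paths in the two-row Young graph.
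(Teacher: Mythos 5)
Your proposal is correct and follows essentially the same route as the paper: both pass through the surjection $H_{n,q}\twoheadrightarrow \TL_n$ (\cite[Cor.\ 2.11.2]{GdHJ}) and the fact that the Iwahori--Hecke algebras share the branching rules of the symmetric groups (\cite[Thm.\ 2.10.9]{GdHJ}), hence the Littlewood--Richardson coefficients. The two points you flag as potential obstacles --- compatibility of the stacking embedding with the parabolic Hecke embedding, and the closure of two-row shapes under the LR rule --- are genuine details the paper's terse proof leaves implicit, and your verifications of both are sound.
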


\begin{proof}
Under the assumption that $\delta^2 = 2+q+q^{-1}$ and $q$ is not a root of unity there is an algebra surjection from the Iwahori-Hecke algebra $H_{n,q}$ (of type A) to $\TL_n$. (cf.\ \cite[Cor.\ 2.11.2]{GdHJ}) It is also known, that the Iwahori-Hecke algebras have the same branching rules as the symmetric groups. (cf.\ \cite[Thm.\ 2.10.9]{GdHJ}) Now the branching rule follows from the branching rule for the symmetric groups.
\end{proof}

We will from now on only consider Temperley--Lieb algebras that satisfy the conditions of \autoref{thm:TL repthy} and \autoref{thm:TL branching}.

%

Let us recall the semisimple representation theory of the Brauer algebras. 

\begin{thm}[{\cite[3.2+3.3]{Wen}+\cite[8D]{Br}}]\label{thm:Br rep thy}
Let  $\delta\in \mathbb C\setminus \{0,1,\dots, n-2\}$. Then $\Br_n(\delta, \mathbb C)$ is semisimple and decomposes into full matrix algebras over $\mathbb C$. Let $\Pi_{\Br_n}$ be the set of all partitions of $n, n-2,n-4, \dots$, then the nonisomorphic simple $\Br_n$-modules are index over $\Pi_{\Br_n}$ and we shall denote them by $\Br_n(\lambda)$ for $\lambda \in \Pi_{\Br_n}$.
\end{thm}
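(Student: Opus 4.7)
The plan is to follow Wenzl's classical strategy, based on the tower of algebras $\Br_0 \subset \Br_1 \subset \Br_2 \subset \cdots$ together with Jones's basic construction. I will first set up the inductive framework and then identify the nonsemisimple locus.

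First I would fix the distinguished element $e_{n-1} \in \Br_n$ given by the diagram with right-to-right connection $\{n-1,n\}$, left-to-left connection $\{-(n-1),-n\}$, and through-strands $\{-i,i\}$ for $i\le n-2$. A direct diagrammatic calculation gives $e_{n-1}^2 = \delta\, e_{n-1}$, so for $\delta\neq 0$ the element $e = \delta^{-1} e_{n-1}$ is an idempotent. Next I would verify two identifications: the corner algebra $e\,\Br_n\, e$ is isomorphic to $\Br_{n-2}$ (by reading only the part of a diagram involving nodes $\pm 1,\dots,\pm(n-2)$), and the two-sided ideal $I_n := \Br_n e \Br_n$ is exactly the span of diagrams with at least one right-to-right (equivalently, left-to-left) connection. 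Consequently the quotient $\Br_n/I_n$ is spanned by perfect matchings having only through-strands, and one recognizes this as $\mathbb{C}[S_n]$.

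With these building blocks, the proof proceeds by induction on $n$. Assume $\Br_{n-2}(\delta,\mathbb{C})$ is semisimple; then so is $e\,\Br_n\,e \cong \Br_{n-2}$. Because $e$ is a full idempotent of $I_n$, Morita theory gives an equivalence between $I_n$-modules and $e\Br_n e$-modules, so $I_n$ is a semisimple algebra (without unit) and its simple modules are naturally indexed by partitions of $n-2, n-4,\dots$, obtained by inducing up the $\Br_{n-2}$-simples. Combining with the simple modules of $\mathbb{C}[S_n] = \Br_n/I_n$, indexed by partitions of $n$, one obtains the claimed index set $\Pi_{\Br_n}$. To upgrade the cell-filtration picture into an actual decomposition into a product of matrix algebras, I would show that the short exact sequence $0 \to I_n \to \Br_n \to \mathbb{C}[S_n] \to 0$ splits; equivalently, that every cell bilinear form on the cell modules $\Br_n(\lambda)$ is nondegenerate.

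The main obstacle is the last step: pinning down \emph{exactly} for which values of $\delta$ semisimplicity fails, and proving that $\delta\in\mathbb{C}\setminus\{0,1,\dots,n-2\}$ suffices. For this I would compute (or cite Wenzl) the Gram determinants of the cell bilinear forms on $\Br_n(\lambda)$ for $\lambda\vdash n-2k$; these determinants factor as products of the linear factors $(\delta - j)$ for $j\in\{0,1,\dots,n-2\}$, and vanish precisely at those integers. Since a cellular algebra is semisimple iff all cell forms are nondegenerate, the exclusion of $\{0,1,\dots,n-2\}$ is both necessary and sufficient, completing the proof. The technical heart of the argument—establishing these Gram determinant formulas—is precisely what is carried out in \cite{Wen} (and originally sketched by Brauer in \cite{B}), so I would invoke those references rather than redo the calculation.
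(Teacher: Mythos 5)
You should know at the outset that the paper contains no proof of this theorem: it is quoted from the literature, with the proof deferred entirely to Wenzl \cite{Wen} and Brauer \cite{Br}, so the comparison can only be with those sources. Your skeleton---the idempotent $e=\delta^{-1}e_{n-1}$, the corner isomorphism $e\Br_n e\cong \Br_{n-2}$, the ideal $I_n=\Br_n e \Br_n$ spanned by diagrams with a horizontal edge, and the quotient $\Br_n/I_n\cong \mathbb C[S_n]$---is indeed the classical setup behind Wenzl's tower argument. But your inductive step has a genuine gap: semisimplicity of the corner algebra $e\Br_n e$ does \emph{not} imply semisimplicity of $I_n$. The functor $M\mapsto eM$ is an equivalence only onto a quotient category, and the basic-construction comparison map $\Br_n e\otimes_{e\Br_n e}e\Br_n \to I_n$ can fail to be an isomorphism---precisely at the bad parameters. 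A minimal counterexample in the same diagrammatic family: $e\TL_3 e\cong \TL_1=\mathbb C$ is semisimple for every $\delta\neq 0$, yet $\TL_3(\delta)$ is not semisimple at $\delta=\pm 1$, and its radical lies inside $\TL_3 e\TL_3$. Wenzl's actual mechanism supplies the missing nondegeneracy through the Markov trace and conditional expectation, and that is exactly where the hypothesis on $\delta$ is consumed; as written, your Morita step assumes what has to be proved.

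Second, your proposed endgame is factually wrong: the Gram determinants of the cell forms do \emph{not} factor into linear factors $(\delta-j)$ with $j\in\{0,1,\dots,n-2\}$ only. Already for $\Br_3$ and $\lambda=(1)$ the cell module has basis indexed by the three possible horizontal arcs, the Gram matrix is $(\delta-1)I+J$ with $J$ the all-ones $3\times 3$ matrix, and the determinant is $(\delta-1)^2(\delta+2)$, which vanishes at $\delta=-2\notin\{0,1\}$. Hence $\Br_3(-2)$ is not semisimple, so your claim that the exclusion of $\{0,1,\dots,n-2\}$ is ``both necessary and sufficient'' cannot stand: non-semisimplicity also occurs at negative integers. (This incidentally shows that the parameter set in the theorem as quoted must be read with care on the negative side; Wenzl's theorems give semisimplicity for $\delta\notin\mathbb Z$, Brauer's result covers integer $\delta$ large compared with $n$, and the sharp criterion for integer $\delta$---whose bad set does contain negative integers---is a much later theorem of Rui, not contained in \cite{Wen} or \cite{Br}.) So the determinant formula your argument hinges on is false, and the references you invoke do not contain the fact your proof needs.
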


We will henceforth assume that  the Brauer algebras are as considered in the theorem. We are interested in the branching rules of the Brauer algebras. We can extract them from the branching rules of the orthogonal groups $\Orth_n$ over $\mathbb C$ via the following Schur-Weyl duality.

\begin{thm}[{\cite[Section V.5]{Weyl}}] \label{thm:Br semisimple}
Let $\pi \colon \Orth_n \to \GL_n$ be the standard representation, consider $\pi^{\otimes f}\colon \Orth_n \to \GL_{n\cdot f} = \GL( (\mathbb C^n)^{\otimes f})$, then if $n \ge 2f$, $\Br_f(\delta,\mathbb C)$ with $\delta = n$ is isomorphic to $\End_{\Orth_n}((\mathbb C^n)^{\otimes f})$.
\end{thm}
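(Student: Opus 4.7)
The plan is to construct the action of $\Br_f(n,\mathbb C)$ on $V^{\otimes f}$ (with $V=\mathbb C^n$) explicitly, verify it commutes with the diagonal $\Orth_n$-action, and then split the problem into surjectivity (via the First Fundamental Theorem of Invariant Theory for $\Orth_n$) and injectivity (by a direct linear independence argument that uses $n \ge 2f$).

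First I would fix an orthonormal basis $e_1,\dots,e_n$ of $V$ and define the action diagrammatically. A basis element of $\Br_f$ is a perfect matching of $2f$ points arranged in a top row and a bottom row. For such a diagram $d$, set
\[ d \cdot (e_{i_1}\otimes\cdots\otimes e_{i_f}) \;=\; \sum_{j_1,\dots,j_f} \Big(\prod_{\{a,b\}\in d} \delta_{\kappa(a)\kappa(b)}\Big)\, e_{j_1}\otimes\cdots\otimes e_{j_f},\]
where $\kappa$ assigns $i_s$ to the $s$-th top vertex and $j_s$ to the $s$-th bottom vertex; the multiplication law on diagrams then produces a factor of $n^c$ for each closed loop in the composite, so the action factors through $\Br_f(n,\mathbb C)$. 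Checking that the product of diagrams matches composition of the induced endomorphisms is direct: a top-to-top pair contracts two inputs via the form, a bottom-to-bottom pair inserts the dual of the form, and through-strands are the identity on that factor. Equivariance for $\Orth_n$ follows because the inner product and its dual are $\Orth_n$-invariant by definition, while permutation diagrams act through the symmetric group and hence commute with the diagonal action. This yields an algebra map $\phi\colon \Br_f(n,\mathbb C) \to \End_{\Orth_n}(V^{\otimes f})$.

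For surjectivity I would appeal to Weyl's First Fundamental Theorem for $\Orth_n$, which asserts that $(V^{\otimes 2f})^{\Orth_n}$ is spanned by the tensors
\[ T_\pi \;=\; \sum_{i_1,\dots,i_{2f}} \prod_{\{a,b\}\in\pi} \delta_{i_a i_b}\; e_{i_1}\otimes\cdots\otimes e_{i_{2f}},\]
indexed by perfect matchings $\pi$ of $2f$ points. Using the invariant form to identify $\End(V^{\otimes f}) \cong V^{\otimes f}\otimes (V^*)^{\otimes f} \cong V^{\otimes 2f}$ as $\Orth_n$-representations, the invariants on the left are the endomorphisms commuting with $\Orth_n$, and under this identification the tensors $T_\pi$ correspond exactly to $\phi$ of the Brauer diagrams. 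Hence $\phi$ surjects.

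For injectivity when $n\ge 2f$ I would show the $\phi(d)$ are linearly independent. Given $d$, pick $2f$ distinct basis indices and evaluate $\phi(d)$ on the input tensor carrying the top labels according to these choices; the diagram $d$ then forces a unique bottom labelling consistent with all Kronecker deltas, and distinct matchings $d$ produce distinct such bottom labellings. Reading off coefficients in the $e_{j_1}\otimes\cdots\otimes e_{j_f}$ basis separates the diagrams, so $\{\phi(d)\}$ is linearly independent. The main obstacle is the surjectivity step: it relies entirely on the First Fundamental Theorem for $\Orth_n$, whose proof (via Weyl's polarization and the Capelli identity, or modern variants) is the deep input; everything else is bookkeeping with diagram composition.
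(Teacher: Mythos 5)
The paper itself offers no proof of this theorem---it is quoted directly from Weyl---so the comparison is with the classical argument, and your overall architecture (diagrammatic action, surjectivity from the First Fundamental Theorem, injectivity by linear independence of the diagram operators) is exactly that classical route. The surjectivity half is sound: $\End_{\Orth_n}(V^{\otimes f})$ is the invariant subspace of $\End(V^{\otimes f})$ under conjugation, the invariant form gives an $\Orth_n$-equivariant identification $\End(V^{\otimes f}) \cong V^{\otimes 2f}$, and the FFT says the invariants in $V^{\otimes 2f}$ are spanned by the contraction tensors $T_\pi$, which correspond to the images of the Brauer diagrams. The loop-value bookkeeping ($\delta = n$) is also correct.

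Your injectivity argument, however, is wrong as stated. If $d$ contains a top-to-top arc $\{a,b\}$, then any input tensor whose labels at positions $a$ and $b$ are \emph{distinct} is annihilated by $\phi(d)$, since the factor $\delta_{\kappa(a)\kappa(b)}$ vanishes; so ``pick $2f$ distinct basis indices and evaluate'' produces $0$, not a distinguished output. Likewise a bottom-to-bottom arc does not force a unique bottom labelling: it inserts $\sum_j e_j \otimes e_j$, a sum over $n$ labellings. Concretely, for $f=2$ and $d$ the cup-cap diagram, $\phi(d)(e_{i_1}\otimes e_{i_2}) = \delta_{i_1 i_2}\sum_j e_j\otimes e_j$, so your evaluation scheme sees nothing. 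Your argument is valid only for permutation diagrams. The repair is cheap and uses the matrix-coefficient formula you already wrote down: the entry of $\phi(d)$ at $\bigl((j_1,\dots,j_f),(i_1,\dots,i_f)\bigr)$ is $\prod_{\{a,b\}\in d}\delta_{\kappa(a)\kappa(b)}$. Given $d$, choose $\kappa$ \emph{constant on each of the $f$ arcs of $d$} and taking $f$ pairwise distinct values (possible since $n\ge 2f\ge f$). That entry of $\phi(d)$ is $1$; for any other matching $d'$ it is nonzero only if $\kappa$ is constant on every arc of $d'$, i.e.\ every arc of $d'$ lies inside an arc of $d$, and since all arcs have exactly two elements this forces $d'=d$. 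This separates the operators $\phi(d)$ and yields linear independence---in fact it shows injectivity already for $n\ge f$, so the hypothesis $n\ge 2f$ is more than this half requires.
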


\begin{cor}[Double commutant theory]
If $n\ge 2f$, the $\mathbb C\Orth_n \otimes \Br_f(n,\mathbb C)$-module $(\mathbb C^n)^{\otimes f}$ is semisimple and splits into
\[ (\mathbb C^n)^{\otimes f} \cong \bigoplus_{\lambda\in \Pi_{\Br_f}} \Orth_n(\lambda) \otimes \Br_f(\lambda),\]
where $\Orth_n(\lambda)$  are the irreducible $\Orth_n$-representations of weight $\lambda$.
\end{cor}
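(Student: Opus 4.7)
The plan is to apply the standard double commutant theorem for mutual centralizers in semisimple algebras. Setting $V = (\mathbb{C}^n)^{\otimes f}$, the group algebra (or enveloping algebra) image $A \subseteq \End_{\mathbb C}(V)$ of $\Orth_n$ acts semisimply on $V$, because $\Orth_n$ is a reductive algebraic group over $\mathbb C$ and any finite-dimensional rational representation of a reductive group in characteristic zero is completely reducible. By the preceding theorem, the centralizer $B := \End_{\Orth_n}(V)$ is isomorphic to $\Br_f(n,\mathbb C)$, and by \autoref{thm:Br rep thy} this algebra is itself semisimple under our standing hypothesis $\delta = n \ge 2f$.

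First I would invoke the bimodule form of the double commutant theorem: if $A \subseteq \End_{\mathbb C}(V)$ is a semisimple subalgebra of a finite-dimensional endomorphism algebra and $B = \End_A(V)$, then $B$ is also semisimple, $A = \End_B(V)$, and as $A \otimes B$-modules
\[
V \;\cong\; \bigoplus_{\mu} W_\mu \otimes U_\mu,
\]
where $\mu$ ranges over the isomorphism classes of $A$-irreducibles appearing in $V$, $W_\mu$ is the corresponding irreducible $A$-module, and $U_\mu := \Hom_A(W_\mu, V)$ is an irreducible $B$-module, with pairwise nonisomorphic $U_\mu$ for distinct $\mu$. Applying this with $A = \mathbb{C}[\Orth_n]$ (acting through $\pi^{\otimes f}$) and $B \cong \Br_f(n,\mathbb C)$ gives the asserted bimodule decomposition; the semisimplicity of the $\mathbb C\Orth_n \otimes \Br_f(n,\mathbb C)$-module structure on $V$ is immediate from the decomposition into tensor products of irreducibles.

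The remaining step is to match the indexing set of $\Orth_n$-irreducibles that actually appear in $V$ with $\Pi_{\Br_f}$, the partitions of $f, f-2, f-4, \ldots$. For this I would use the classical description of $\Orth_n$-subrepresentations of tensor powers of the defining representation via harmonic (traceless) tensors: the irreducible constituents of $(\mathbb C^n)^{\otimes f}$ are precisely the Weyl modules $\Orth_n(\mu)$ for partitions $\mu$ with $|\mu| \in \{f,f-2,f-4,\dots\}$, because each contraction with the invariant symmetric form drops the tensor degree by two. The hypothesis $n \ge 2f$ guarantees that every such $\mu$ has at most $n/2$ parts and so yields a distinct, nonzero $\Orth_n$-irreducible (no ``folding'' identifications via the outer automorphism occur in this stable range). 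Combining this with the bimodule decomposition from the previous paragraph gives exactly the sum indexed by $\lambda \in \Pi_{\Br_f}$.

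The main obstacle is this last indexing step: the formal bimodule argument is a one-line invocation once one has semisimplicity on both sides and mutual centralizers, but the identification of the precise set of $\Orth_n$-irreducibles requires invoking the harmonic decomposition of tensor space together with the stable range $n \ge 2f$ to rule out non-generic coincidences between Weyl modules. One should either cite Weyl's treatment in \cite{Weyl} directly for this combinatorial identification, or briefly include the harmonic-tensor reduction that identifies the set of $\mu$ appearing with $\Pi_{\Br_f}$.
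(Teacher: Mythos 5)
Your proposal is correct and matches the paper's (implicit) argument: the paper states this corollary without proof, treating it as the standard double-centralizer consequence of the preceding Schur--Weyl theorem cited from \cite{Weyl}, which is exactly the argument you spell out. Your extra care with the indexing step---harmonic tensors giving constituents $\Orth_n(\mu)$ with $|\mu|\in\{f,f-2,\dots\}$, and $n\ge 2f$ ensuring these are distinct and admissible---is precisely the content the paper delegates to Weyl's treatment, so there is no gap and no genuinely different route.
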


The following theorem gives the stable branching rules for the orthogonal groups. Here $\ell(\lambda)$ is the length of the partition of $\lambda$. From this we will immediately get the branching rules for the Brauer algebras.

\begin{thm}[{\cite[2.1.2]{HTW}}]
Given nonnegative integer partitions $\lambda, \mu, \nu$ such that $\ell(\lambda) \le \lfloor n/2\rfloor$ and $\ell(\mu) + \ell(\nu) \le \lfloor n/2\rfloor$, then
\[ [\Orth_n(\mu) \otimes \Orth_n(\nu), \Orth_n(\lambda)] =  \sum_{\alpha, \beta, \gamma} c_{\alpha \beta}^{\lambda} c_{\alpha \gamma}^{\mu} c_{\beta\gamma}^\nu =: d_{\mu\nu}^\lambda.\]
\end{thm}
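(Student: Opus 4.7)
The plan is to work in a universal character ring where Littlewood's classical identity for orthogonal characters holds, and then reduce the claim to a purely symmetric-function manipulation. Recall that in the stable range (when all partitions fit in $\lfloor n/2 \rfloor$ rows), the character of $\Orth_n(\lambda)$ admits the Schur-function expansion
\[ o_\lambda = \sum_{\delta} (-1)^{|\delta|/2} s_{\lambda/\delta}, \]
where $\delta$ ranges over partitions all of whose columns have even length. I would take this identity (together with its formal inverse expressing each $s_\lambda$ as an alternating sum of $o_\mu$'s) as the algebraic input.

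First I would multiply out
\[ o_\mu \cdot o_\nu = \sum_{\delta,\epsilon} (-1)^{(|\delta|+|\epsilon|)/2}\, s_{\mu/\delta}\, s_{\nu/\epsilon}, \]
and apply the Littlewood--Richardson rule twice: expand $s_{\mu/\delta} = \sum_\pi c_{\pi\delta}^{\mu} s_\pi$ and similarly $s_{\nu/\epsilon} = \sum_\sigma c_{\sigma\epsilon}^{\nu} s_\sigma$, then expand $s_\pi \, s_\sigma = \sum_\lambda c_{\pi\sigma}^\lambda s_\lambda$. This yields a Schur expansion of $o_\mu \cdot o_\nu$ whose coefficients are signed quadruple sums of products of LR numbers.

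Next I would invert Littlewood's identity to rewrite each $s_\lambda$ back in terms of the $o_\lambda$. A careful regrouping of the indices $(\delta,\epsilon,\pi,\sigma)$ should collapse them into triples $(\alpha,\beta,\gamma)$ fitting the Newell--Littlewood pattern $c_{\alpha\beta}^{\lambda} c_{\alpha\gamma}^{\mu} c_{\beta\gamma}^{\nu}$; the alternating signs are designed to cancel in pairs because the inverse identity contributes compensating factors of $(-1)^{|\delta|/2}$ for each pairing of a ``even-column'' partition. What remains is precisely the asserted $d_{\mu\nu}^\lambda$.

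The main obstacle is the stability hypothesis itself. Littlewood's formula for $o_\lambda$ coincides with the genuine $\Orth_n$-character only when every partition appearing has at most $n$ rows; outside that range, extra cancellations and reflection identities for orthogonal Weyl characters kick in and spoil the clean Schur expansion. The bounds $\ell(\lambda) \le \lfloor n/2 \rfloor$ and $\ell(\mu)+\ell(\nu) \le \lfloor n/2 \rfloor$ are exactly what keeps every intermediate partition inside the stable window so that the universal identity descends to an identity of actual $\Orth_n$-representations. A more conceptual alternative, essentially the Howe--Tan--Willenbring route, is to realize $\Orth_n(\mu) \otimes \Orth_n(\nu)$ via seesaw reciprocity for a dual pair such as $(\Orth_n, \mathrm{Sp}_{2k}\times \mathrm{Sp}_{2k})$ acting on an oscillator module, and read off the Newell--Littlewood coefficients directly from the joint decomposition, whose accuracy in the stable range is built into the dual-pair framework.
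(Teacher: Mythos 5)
The paper does not actually prove this statement: it is quoted directly from Howe--Tan--Willenbring \cite[2.1.2]{HTW}, whose argument is exactly the seesaw/dual-pair computation you relegate to a closing remark (a seesaw of the form $(\Orth_n \times \Orth_n,\, \mathfrak{sp}_{2k_1}\times \mathfrak{sp}_{2k_2})$ against $(\Orth_n,\, \mathfrak{sp}_{2(k_1+k_2)})$ acting on a polynomial model of the oscillator representation). So your ``alternative'' is in fact the cited source's method, while your main route---universal characters and Littlewood's identities, in the tradition of Newell, Littlewood, and Koike--Terada---is a genuinely different and in principle viable path to the Newell--Littlewood formula. Your explanation of the role of the hypotheses is also the right one: every $\lambda$ occurring in $o_\mu\, o_\nu$ satisfies $\ell(\lambda) \le \ell(\mu)+\ell(\nu) \le \lfloor n/2\rfloor$, so all universal characters in sight specialize to genuine irreducible $\Orth_n$-characters.

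As written, however, the sketch has two genuine gaps. First, your algebraic input is wrong: letting $\delta$ range over partitions with all columns of even length is the \emph{symplectic} rule, not the orthogonal one. With $\delta=(1,1)$ your formula would give $o_{(1,1)} = s_{(1,1)} - s_{\emptyset}$, but $\Lambda^2$ of the standard representation of $\Orth_n$ is irreducible---it is the symplectic group whose invariant form lies in $\Lambda^2$. In the orthogonal case the signed expansion $o_\lambda = \sum_\delta (-1)^{|\delta|/2} s_{\lambda/\delta}$ runs over partitions whose Frobenius coordinates have the form $(a_1{+}1,\dots,a_r{+}1 \mid a_1,\dots,a_r)$ (sanity check: $\delta=(2)$ yields $o_{(2)} = s_{(2)} - s_{\emptyset}$, matching $S^2 \cong \mathcal{H}^2 \oplus \mathbb{C}$), while the unsigned inverse branching $s_\pi = \sum_{\mu}\big(\sum_{\delta \text{ with even rows}} c^{\pi}_{\mu\delta}\big) o_\mu$ uses even \emph{rows}. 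Second, and more seriously, the step where ``a careful regrouping of the indices should collapse them into triples'' \emph{is} the theorem: the cancellation rests on Littlewood's product identities, which assert that the signed sum of $s_\beta$ over the Frobenius family above is the multiplicative inverse of $\sum_{\delta \text{ even rows}} s_\delta$, and this must be fed through the skewing/coproduct calculus to extract the coefficient. Declaring that the signs ``cancel in pairs'' does not produce $\sum_{\alpha,\beta,\gamma} c_{\alpha\beta}^{\lambda} c_{\alpha\gamma}^{\mu} c_{\beta\gamma}^{\nu}$; that computation needs to be carried out or cited (e.g.\ from Koike--Terada), at which point the cleanest repair is to quote the Newell--Littlewood formula at the level of universal characters and append your (correct) stability observation to descend to $\Orth_n$.
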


\begin{cor}\label{cor:Brauerbranching} Let $\delta\in \mathbb C\setminus \{0,1,\dots, e+f-2\}$. Then
\[ [ \Res_{\Br_e\otimes \Br_f}^{\Br_{e+f}} \Br_{e+f}(\lambda),  \Br_{e}(\mu) \otimes \Br_{f}(\nu)] 
= d_{\mu\nu}^\lambda=  \sum_{\alpha, \beta, \gamma} c_{\alpha \beta}^{\lambda} c_{\alpha \gamma}^{\mu} c_{\beta\gamma}^\nu .\]
\end{cor}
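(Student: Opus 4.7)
The plan is a two-step reduction: first establish the identity when $\delta=n$ is a sufficiently large positive integer by Schur--Weyl duality, then extend to all admissible complex $\delta$ by a deformation argument.

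For the first step, I would fix an integer $n\ge 2(e+f)$ and specialize to $\delta=n$. By the double commutant corollary, there is a decomposition
\[(\mathbb C^n)^{\otimes(e+f)} \cong \bigoplus_{\lambda\in \Pi_{\Br_{e+f}}} \Orth_n(\lambda) \otimes \Br_{e+f}(\lambda)\]
of $(\Orth_n,\Br_{e+f})$-bimodules. Under the factorization $(\mathbb C^n)^{\otimes(e+f)}=(\mathbb C^n)^{\otimes e}\otimes (\mathbb C^n)^{\otimes f}$, the subalgebra $\Br_e\otimes \Br_f\subset \Br_{e+f}$ acts factorwise, so the same module also decomposes as
\[\bigoplus_{\mu,\nu} \bigl(\Orth_n(\mu)\otimes \Orth_n(\nu)\bigr)\otimes \bigl(\Br_e(\mu)\otimes \Br_f(\nu)\bigr).\]
Since $\ell(\mu)+\ell(\nu)\le e+f\le \lfloor n/2\rfloor$, the stable orthogonal branching rule applies and gives $\Orth_n(\mu)\otimes \Orth_n(\nu)\cong\bigoplus_{\lambda} d_{\mu\nu}^\lambda\,\Orth_n(\lambda)$. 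As the $\Orth_n(\lambda)$ for $\lambda\in\Pi_{\Br_{e+f}}$ are pairwise non-isomorphic irreducibles in this range of $n$, comparing the isotypic components of the two decompositions yields
\[\Res_{\Br_e\otimes \Br_f}^{\Br_{e+f}} \Br_{e+f}(\lambda)\cong \bigoplus_{\mu,\nu} d_{\mu\nu}^\lambda\,\bigl(\Br_e(\mu)\otimes \Br_f(\nu)\bigr),\]
which is the desired identity at $\delta=n$.

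For the second step, I would argue that the branching multiplicities are constant as $\delta$ varies over the connected open subset $\mathbb C\setminus\{0,1,\dots,e+f-2\}$, where all three Brauer algebras are simultaneously semisimple. The cleanest route uses Graham--Lehrer cellularity: $\Br_n(\delta)$ has cell modules that are free over $\mathbb C[\delta]$ of rank independent of $\delta$ and that specialize to the simple modules wherever the algebra is semisimple, and the restriction of a cell module from $\Br_{e+f}$ to $\Br_e\otimes \Br_f$ carries a cell filtration whose multiplicities are themselves independent of $\delta$. In the semisimple range the filtration multiplicities equal the branching multiplicities, so the latter are locally constant on the semisimple locus; combined with Step~1 this forces them to equal $d_{\mu\nu}^\lambda$ throughout.

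The main obstacle is this second step. The Schur--Weyl computation only pins down the multiplicities at integer $\delta$ in the stable range, and transporting the answer to arbitrary complex $\delta$ requires genuine deformation-theoretic input. Graham--Lehrer cellularity provides one natural source of the needed uniformity; alternatively one may invoke Tits' deformation theorem applied to the flat family $\Br_n(\delta)$ of semisimple algebras parametrized by $\mathbb C\setminus\{0,1,\dots,n-2\}$.
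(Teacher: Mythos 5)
Your proposal takes essentially the same route as the paper: the paper's proof is exactly your Step 1, comparing the two bimodule decompositions of $(\mathbb C^n)^{\otimes(e+f)}$ under Schur--Weyl duality with $\Orth_n$ and invoking the stable orthogonal branching rule. Your Step 2 is compressed in the paper into the single unjustified phrase ``We may assume that $\delta = n\ge 2(e+f)$,'' so your cellular/Tits deformation argument correctly makes explicit a reduction the paper leaves implicit.
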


\begin{proof}
We may assume that  $\delta = n\ge 2(e+f)$. Then 
\[(\mathbb C^n)^{\otimes(e+f)} = \bigoplus_{\lambda\in \Pi_{\Br_{e+f}}} \Orth_n(\lambda) \otimes \Br_{e+f}(\lambda).\]
Restricting this $\mathbb C\Orth_n \otimes \Br_{e+f}(n)$-module to $\mathbb C\Orth_n \otimes \Br_e(n) \otimes \Br_f(n)$ we get
\begin{align*}
(\mathbb C^n)^{\otimes e}\otimes (\mathbb C^n)^{\otimes f} = &\left(\bigoplus_{\mu\in \Pi_{\Br_e}} \Orth_n(\mu) \otimes \Br_{e}(\mu)\right) \otimes \left(\bigoplus_{\nu\in \Pi_{\Br_f}} \Orth_n(\nu) \otimes \Br_{f}(\nu)\right)\\
= & \bigoplus_{\mu,\nu} \big(\Orth_n(\mu)\otimes \Orth_n(\nu)\big) \otimes \big(\Br_{e}(\mu) \otimes \Br_{f}(\nu)\big).
\end{align*}
Since $\ell(\mu),\ell(\nu)\le e+f \le \lfloor n/2\rfloor$, we have
\[ \Orth_n(\mu)\otimes \Orth_n(\nu)\cong \bigoplus_\lambda \Orth_n(\lambda)^{\oplus d_{\mu\nu}^{\lambda}}.\]
Thus
\[ \Res_{\mathbb C\Orth_n\otimes \Br_e(n) \otimes \Br_f(n)}^{\mathbb C\Orth_n \otimes \Br_{e+f}(n)}\Orth_n(\lambda) \otimes \Br_{e+f}( \lambda)  \cong \Orth_n(\lambda) \otimes \bigoplus_{\mu,\nu} (\Br_{e}(\mu)\otimes \Br_{f}(\nu))^{\oplus d_{\mu\nu}^\lambda},\]
and hence
\[ [ \Res_{\Br_e\otimes \Br_f}^{\Br_{e+f}} \Br_{e+f}(\lambda),  \Br_{e}(\mu) \otimes \Br_{f}(\nu)] 
= d_{\mu\nu}^\lambda.\qedhere\]
\end{proof}

Finally, we introduce the semisimple representation theory of the partition algebras. The results can be found in \cite{M96} although some they might be summarized using earlier results from Martin.

\begin{thm}
Let $\delta \in \mathbb C\setminus \{ 0, \dots, 2n-2\}$. Then $P_n(\delta, \mathbb C)$ is semisimple and decomposes into full matrix algebras over $\mathbb C$. Let $\Pi_{P_n}$ be the set of all partitions of $0, \dots, n$, then the nonisomorphic simple $P_n$-modules are indexed over $\Pi_{P_n}$ and we shall denote them by $P_n(\lambda)$ for $\lambda \in \Pi_{P_n}$.
\end{thm}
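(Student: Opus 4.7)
The plan is to combine Schur--Weyl duality between the partition algebra and the symmetric group with a generic semisimplicity argument to reduce from a discrete to an algebraic condition on $\delta$, mirroring the strategy we used for the Brauer algebras in \autoref{cor:Brauerbranching}.

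First, I would recall Jones' classical duality: for a positive integer $N$, the partition algebra $P_n(N,\mathbb C)$ maps to $\End_{S_N}\bigl((\mathbb C^N)^{\otimes n}\bigr)$ via the action in which a partition diagram acts on a simple tensor $e_{i_1}\otimes\cdots\otimes e_{i_{2n}}$ by declaring indices in the same block to be equal and summing over all consistent labellings. One then verifies that for $N\ge 2n$, this map is an algebra isomorphism. Since $(\mathbb C^N)^{\otimes n}$ is semisimple as an $S_N$-module, its endomorphism algebra is semisimple and splits as a direct sum of full matrix algebras. The double commutant theorem yields a decomposition
\[ (\mathbb C^N)^{\otimes n} \cong \bigoplus_{\lambda} S^\lambda \otimes P_n(N)(\lambda), \]
and elementary character computations (along the lines of Martin's analysis of the $\{0,1\}$-valued ``set-partition'' action) identify the set of $\lambda$ appearing as precisely partitions of $k$ for $0\le k \le n$, after the appropriate index shift. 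This proves the statement for all sufficiently large integer $\delta$ and identifies $\Pi_{P_n}$.

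Second, to extend from integer $\delta\gg 0$ to generic $\delta\in\mathbb C$, I would invoke the cellular algebra structure on $P_n(\delta)$ due to Xi, with cell poset the set of partitions of $0,1,\dots,n$. For each such $\lambda$ there is a cell module $W_n(\lambda)$ whose Gram form, expressed in a basis of partition half-diagrams, has a determinant that is a polynomial in $\delta$ with integer coefficients. Graham--Lehrer's semisimplicity criterion tells us that $P_n(\delta)$ is semisimple and has exactly the $W_n(\lambda)$ (for $\lambda\in\Pi_{P_n}$) as its simple modules precisely when each of these Gram determinants is nonzero at $\delta$. Because the algebra is semisimple for every sufficiently large integer $\delta$ by the previous step, each determinant is a nonzero polynomial, so its vanishing locus is a finite subset of $\mathbb C$.

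The main obstacle is pinning down the vanishing locus exactly as $\{0,1,\dots,2n-2\}$. Here one has to carry out Martin's inductive analysis: using the tower $P_{n-1}\subset P_n$ together with the ``cap-and-cup'' idempotents that restrict a cell module on $n$ points to a direct sum of cell modules on $n-1$ points (shifted by the cellular poset), one computes the Gram determinants recursively, and the only integer roots that appear in $\delta$ are those bounded by $2n-2$. Packaging this recursion carefully---which is the substantial technical part borrowed from \cite{M96}---completes the proof.
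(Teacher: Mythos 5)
The paper does not actually prove this theorem: it is stated as a quotation from Martin, with the sentence preceding it saying the results ``can be found in \cite{M96}''. So there is no in-paper argument to compare against, and the right benchmark is whether your sketch is a faithful and correct reconstruction of the literature proof. It largely is. Your reduction chain is sound: Schur--Weyl duality $P_n(N,\mathbb C)\cong \End_{\mathfrak S_N}\bigl((\mathbb C^N)^{\otimes n}\bigr)$ for integers $N\ge 2n$ (the same duality the paper states right after this theorem) gives semisimplicity at infinitely many integer values of $\delta$ and identifies the index set $\Pi_{P_n}$ as partitions of $0,\dots,n$ via the shift $\lambda\mapsto\lambda[N]$; cellularity of $P_n(\delta)$ (Xi) plus the Graham--Lehrer criterion converts semisimplicity into nonvanishing of finitely many Gram determinants, each a polynomial in $\delta$; and nonvanishing at infinitely many integers forces each such polynomial to be nonzero, so the non-semisimple locus is finite. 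One small attribution quibble: the duality is due to Martin as well as Jones, and for the partition algebra the symmetric group side is the one the paper uses.

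The genuine gap---which you identify honestly yourself---is the last step: your argument only shows the bad set is \emph{finite}, not that it is contained in $\{0,1,\dots,2n-2\}$, and pinning down that sharp bound is not a routine packaging exercise but the main technical content of Martin's paper (his analysis of the tower $P_{n-1}\subset P_n$ via the idempotent $e$ with $P_{n-1}\cong e P_n e$, and the resulting recursion on Gram determinants). So as a self-contained proof your text is incomplete precisely at the point where the theorem's hypothesis gets its exact shape; as a roadmap terminating in the citation \cite{M96}, it is correct and in fact more informative than the paper, which defers the entire statement to Martin without indicating the mechanism. If you wanted to close the gap without reproducing Martin's recursion, you would still need some independent control on the roots, e.g.\ the known fact that the Gram determinants factor into linear polynomials in $\delta$ with integer roots bounded by $2n-2$; no soft argument of the generic-semisimplicity type will produce that bound.
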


From now on we will assume that the partition algebras are as considered in the theorem.

\begin{thm}
Let $\pi \colon \mathfrak S_n \to \GL_n$ be the permutation representation, consider $\pi^{\otimes f}\colon \mathfrak S_n \to \GL_{n\cdot f} = \GL( (\mathbb C^n)^{\otimes f})$, then if $n \ge 2f$, $P_f(\delta,\mathbb C)$ with $\delta = n$ is isomorphic to $\End_{\mathfrak S_n}((\mathbb C^n)^{\otimes f})$.
\end{thm}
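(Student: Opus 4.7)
The plan is to mimic the Schur--Weyl duality argument for the Brauer algebra (\autoref{thm:Br semisimple}) in the partition algebra setting, following the diagrammatic action of Jones and Martin. First I would define a linear map $\Phi \colon P_f(n,\mathbb C) \to \End((\mathbb C^n)^{\otimes f})$ by the following rule on the diagram basis: given a partition $d$ of $[-f]\cup[f]$ and two index tuples $\mathbf i, \mathbf j \in [n]^f$, set the matrix coefficient
\[
\Phi(d)^{\mathbf j}_{\mathbf i} = \begin{cases} 1 & \text{if the labelling of $[-f]\cup[f]$ by $(i_1,\dots,i_f,j_1,\dots,j_f)$ is constant on every block of $d$,}\\ 0 & \text{otherwise.}\end{cases}
\]
A direct diagram chase will show that $\Phi$ is an algebra homomorphism: when two diagrams are stacked, the matrix entries of the composition equal a sum over intermediate index tuples, and each block that lies entirely in the middle row contributes one free summation variable ranging over $[n]$, producing exactly the factor $\delta = n$ demanded by the partition algebra multiplication rule.

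Next I would verify that the image of $\Phi$ lies in $\End_{\mathfrak S_n}((\mathbb C^n)^{\otimes f})$. This is immediate from the definition, since the entries $\Phi(d)^{\mathbf j}_{\mathbf i}$ depend only on the pattern of equalities among $(i_1,\dots,i_f,j_1,\dots,j_f)$, and this pattern is preserved by applying a permutation $\sigma \in \mathfrak S_n$ simultaneously to every index.

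The core step is to show that $\Phi$ is in fact an isomorphism onto the full centraliser. For injectivity, given two distinct basis diagrams $d \neq d'$, their block partitions of $[-f]\cup[f]$ differ, so I can choose an assignment $(\mathbf i, \mathbf j) \in [n]^{2f}$ that is constant on the blocks of $d$ and takes distinct values on distinct blocks; this is possible precisely because $n \ge 2f \ge (\text{number of blocks of }d)$. Then $\Phi(d)^{\mathbf j}_{\mathbf i} = 1$ while $\Phi(d')^{\mathbf j}_{\mathbf i} = 0$. For surjectivity I would argue by a dimension count: $\dim_{\mathbb C} \End_{\mathfrak S_n}((\mathbb C^n)^{\otimes f})$ equals the number of $\mathfrak S_n$-orbits on $[n]^{2f}$, and such orbits are in bijection with set partitions of a $2f$-element set having at most $n$ blocks; since $n \ge 2f$ every such partition arises, giving $B(2f)$ orbits, which matches $\dim P_f(n,\mathbb C)$ (the number of partitions of $[-f]\cup[f]$).

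The main obstacle I expect is the multiplicativity verification for $\Phi$: the bookkeeping for how middle blocks of the concatenated diagram arise from merging blocks of $d$ and $d'$ along the middle row, and how the resulting sums over intermediate indices produce precisely the powers of $\delta = n$ prescribed by the partition algebra product, is conceptually clear but notationally delicate. Once this is in place, the combinatorial identification of $\mathfrak S_n$-orbits on $[n]^{2f}$ with set partitions of $[2f]$, valid in the stable range $n \ge 2f$, completes the proof by a clean dimension count.
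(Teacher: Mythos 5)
Your overall route is the standard direct argument (essentially Jones's proof of partition-algebra Schur--Weyl duality); note that the paper itself gives no proof of this theorem, citing Martin \cite{M96}, so you are reconstructing the classical argument from scratch. Most of it is sound: the definition of $\Phi$ on matrix coefficients, the multiplicativity check (each middle component contributing a free index summing to a factor of $n=\delta$, while components meeting the boundary force their middle labels uniquely), the $\mathfrak S_n$-equivariance, and the identification of $\mathfrak S_n$-orbits on $[n]^{2f}$ with set partitions of $[2f]$ having at most $n$ blocks, which for $n\ge 2f$ gives $\dim_{\mathbb C}\End_{\mathfrak S_n}((\mathbb C^n)^{\otimes f}) = B(2f) = \dim_{\mathbb C} P_f(n,\mathbb C)$.

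However, your injectivity step is genuinely wrong as stated, and it is load-bearing: your dimension count only shows that the centraliser and $P_f(n,\mathbb C)$ have equal dimension, so surjectivity follows from injectivity, not the other way around. The flaw: if $d'$ strictly refines $d$ (every block of $d'$ is contained in a block of $d$), then any labelling constant on the blocks of $d$ is automatically constant on the blocks of $d'$, so your chosen $(\mathbf i,\mathbf j)$ gives $\Phi(d')^{\mathbf j}_{\mathbf i}=1$, not $0$. Concretely, for $f=1$, $d=\{\{-1,1\}\}$ and $d'=\{\{-1\},\{1\}\}$ give $\Phi(d)=\mathrm{id}$ and $\Phi(d')$ the all-ones matrix, which agree on every entry your recipe can produce. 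Moreover, even a correct pairwise separation $\Phi(d)\neq\Phi(d')$ would not yield injectivity of the linear map $\Phi$; you need linear independence of all $B(2f)$ matrices $\{\Phi(d)\}$. The standard repair fixes both defects at once: your evaluation shows $\Phi(d')^{\mathbf j^d}_{\mathbf i^d}=1$ if and only if $d'$ refines $d$, so the evaluation matrix $\big(\Phi(d')^{\mathbf j^d}_{\mathbf i^d}\big)_{d,d'}$ is unitriangular with respect to any linear extension of the refinement order (here $n\ge 2f$ guarantees enough distinct labels), hence invertible, giving linear independence. Equivalently, writing $E_P$ for the indicator matrix of tuples whose equality pattern is exactly the set partition $P$, one has $\Phi(d)=\sum_{P\succeq d}E_P$ (sum over coarsenings), and M\"obius inversion on the partition lattice---using $n\ge 2f$ so that every $E_P$ is nonzero---shows the $\Phi(d)$ span the centraliser. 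With this correction your proof is complete.
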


\begin{cor}[Double commutant theory]
If $n\ge 2f$, the $\mathbb C\mathfrak S_n \otimes P_f(n,\mathbb C)$-module $(\mathbb C^n)^{\otimes f}$ is semisimple and splits into
\[ (\mathbb C^n)^{\otimes f} \cong \bigoplus_{\lambda\in \Pi_{P_f}} \mathfrak S_n(\lambda[n]) \otimes P_f(\lambda),\]
where $\mathfrak S_n(\lambda)$  irreducible $\mathfrak S_n$-representations and $\lambda[n] = (n-|\lambda|,\lambda_1, \lambda_2, \dots)$.
\end{cor}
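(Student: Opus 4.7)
The plan is to follow the same double-commutant pattern used for the Brauer corollary \autoref{cor:Brauerbranching}, with $\mathfrak S_n$ and its permutation representation replacing $\Orth_n$ and the standard representation. The ingredients are: semisimplicity of $\mathbb C\mathfrak S_n$ (Maschke's theorem), the Schur--Weyl-type identification $P_f(n,\mathbb C)\cong \End_{\mathfrak S_n}((\mathbb C^n)^{\otimes f})$ supplied by the preceding theorem (valid for $n\ge 2f$), and the classical double commutant theorem gluing the two.

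First, by Maschke $\mathbb C\mathfrak S_n$ is semisimple, so $(\mathbb C^n)^{\otimes f}$ decomposes canonically as an $\mathfrak S_n$-module:
\[ (\mathbb C^n)^{\otimes f} \cong \bigoplus_\mu \mathfrak S_n(\mu)\otimes M_\mu, \qquad M_\mu := \Hom_{\mathfrak S_n}\bigl(\mathfrak S_n(\mu),(\mathbb C^n)^{\otimes f}\bigr), \]
where $\mu$ ranges over partitions of $n$. Each $M_\mu$ is naturally a module over the commutant $\End_{\mathfrak S_n}((\mathbb C^n)^{\otimes f})\cong P_f(n,\mathbb C)$, and the double commutant theorem applied to the semisimple action of $\mathbb C\mathfrak S_n$ yields that the nonzero $M_\mu$ are pairwise nonisomorphic simple $P_f(n,\mathbb C)$-modules, and that $(\mathbb C^n)^{\otimes f}$ is semisimple as a $\mathbb C\mathfrak S_n\otimes P_f(n,\mathbb C)$-bimodule.

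Next, I would identify which $\mu$ occur. Since $\mathbb C^n\cong \mathfrak S_n((n))\oplus\mathfrak S_n((n-1,1))$, iterating Young's rule (equivalently, using $\mathbb C^n\cong \Ind_{\mathfrak S_{n-1}}^{\mathfrak S_n}\mathbb C$ and tracking the induction/restriction walk on Young's lattice) shows that the $\mu$ appearing in $(\mathbb C^n)^{\otimes f}$ are precisely those with $n-\mu_1\le f$, i.e.\ $\mu = \lambda[n]$ for some partition $\lambda$ with $|\lambda|\le f$. The hypothesis $n\ge 2f$ guarantees $n-|\lambda|\ge f\ge \lambda_1$, so $\lambda[n]$ is always a valid partition, and the $\lambda$ that arise are exactly the elements of $\Pi_{P_f}$. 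Relabeling $M_{\lambda[n]}$ as $P_f(\lambda)$ yields the asserted decomposition.

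I expect the main obstacle to be the combinatorial identification in the last step: matching the bijection $\lambda \leftrightarrow \lambda[n]$ between $\Pi_{P_f}$ and the $\mathfrak S_n$-irreducibles appearing in $(\mathbb C^n)^{\otimes f}$, and verifying that this labeling agrees with the one used in \autoref{thm:Br rep thy}-style classifications of simples of $P_f(n,\mathbb C)$. As a coherence check, $\dim \End_{\mathfrak S_n}((\mathbb C^n)^{\otimes f})$ equals $|\mathfrak S_n\backslash [n]^{2f}|$, which for $n\ge 2f$ is the Bell number $B_{2f} = \dim P_f(n,\mathbb C)$; together with Wedderburn this forces $|\{\mu:M_\mu\ne 0\}| = |\Pi_{P_f}|$, consistent with the claimed bijection.
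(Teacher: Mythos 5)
Your proposal is correct and takes essentially the same route as the paper, which states this corollary without proof as an immediate application of the classical double centralizer theorem to the semisimple $\mathfrak S_n$-action combined with the preceding identification $P_f(n,\mathbb C)\cong \End_{\mathfrak S_n}\bigl((\mathbb C^n)^{\otimes f}\bigr)$ for $n\ge 2f$ --- exactly the argument you spell out. Your supporting details are also sound: the induction/restriction walk correctly identifies the occurring irreducibles as the $\mu$ with $n-\mu_1\le f$, i.e.\ $\mu=\lambda[n]$ with $\lambda\in \Pi_{P_f}$ (all valid partitions and all occurring when $n\ge 2f$), and the Bell-number count $\dim \End_{\mathfrak S_n}\bigl((\mathbb C^n)^{\otimes f}\bigr)=B_{2f}=\dim P_f(n,\mathbb C)$ is a legitimate consistency check on the labeling.
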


With the same proof as for  \autoref{cor:Brauerbranching}, we can now find the branching rules for the partition algebras. The coefficients are called \emph{reduced Kronecker coefficients}.

\begin{cor}
Let $\delta\in \mathbb C\setminus \{0,1,\dots, 2(e+f)-2\}$. Then
\[
 [\Ind_{P_e\otimes P_f}^{P_{e+f}} P_{e}(\mu)\otimes P_{f}(\nu) , P_{e+f}(\lambda)] = \bar g_{\lambda,\mu,\nu} := [ \mathfrak S_n(\mu[n]) \otimes \mathfrak S_n(\nu[n]),  \mathfrak S_n(\lambda[n])].
\]
\end{cor}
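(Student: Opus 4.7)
The plan is to follow the template of \autoref{cor:Brauerbranching}, with the Schur--Weyl pair $(\Orth_n,\Br_f)$ replaced by $(\mathfrak S_n, P_f)$. First I would reduce to the case $\delta = n$ where $n$ is a positive integer satisfying $n \geq 2(e+f)$. In this range all three algebras $P_e(n)$, $P_f(n)$, and $P_{e+f}(n)$ are semisimple, and in the generic semisimple locus the branching multiplicities depend only on the combinatorial labels and not on the specific value of $\delta$, so it is enough to compute them at one convenient integer value.

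Next I would apply the double commutant theorem stated just above to decompose $(\mathbb C^n)^{\otimes(e+f)}$ as a $\mathbb C\mathfrak S_n \otimes P_{e+f}(n)$-module,
\[(\mathbb C^n)^{\otimes(e+f)} \cong \bigoplus_{\lambda \in \Pi_{P_{e+f}}} \mathfrak S_n(\lambda[n]) \otimes P_{e+f}(\lambda),\]
and then restrict the $P_{e+f}(n)$-action along $P_e(n)\otimes P_f(n)\hookrightarrow P_{e+f}(n)$. Applying Schur--Weyl separately to each tensor factor gives a second decomposition of the same module,
\[(\mathbb C^n)^{\otimes e}\otimes (\mathbb C^n)^{\otimes f} \cong \bigoplus_{\mu,\nu} \bigl(\mathfrak S_n(\mu[n])\otimes \mathfrak S_n(\nu[n])\bigr)\otimes\bigl(P_e(\mu)\otimes P_f(\nu)\bigr).\]

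Combining these with the defining identity of the reduced Kronecker coefficients, which holds in the stable range $n\geq 2(e+f)$,
\[\mathfrak S_n(\mu[n])\otimes \mathfrak S_n(\nu[n]) \cong \bigoplus_\lambda \mathfrak S_n(\lambda[n])^{\oplus \bar g_{\lambda,\mu,\nu}},\]
and matching the $\mathfrak S_n(\lambda[n])$-isotypic components on both sides, I obtain
\[\Res^{P_{e+f}}_{P_e\otimes P_f} P_{e+f}(\lambda) \cong \bigoplus_{\mu,\nu} (P_e(\mu)\otimes P_f(\nu))^{\oplus \bar g_{\lambda,\mu,\nu}}.\]
Since all three algebras are semisimple, Frobenius reciprocity identifies this restriction multiplicity with the induction multiplicity in the statement, completing the proof.

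The main obstacle is the reduction step: one must verify that the integer multiplicities on the left-hand side are genuinely generic invariants of the family $P_n(\delta)$, i.e.\ constant on the semisimple locus $\delta\in \mathbb C\setminus\{0,1,\dots,2(e+f)-2\}$, so that the value computed at $\delta=n$ for large $n$ gives the answer at every admissible $\delta$. This is standard via the cellular/quasi-hereditary structure of the partition algebras and the fact that the simple modules are canonically labelled by partitions independent of $\delta$; the remaining verifications are a direct translation of the Brauer case treated above.
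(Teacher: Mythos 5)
Your proposal is correct and is essentially the paper's own proof: the paper establishes this corollary by declaring it to follow ``with the same proof as'' \autoref{cor:Brauerbranching}, which is exactly the double-commutant argument you carry out with the pair $(\mathfrak S_n, P_f)$ in place of $(\Orth_n,\Br_f)$ --- decompose $(\mathbb C^n)^{\otimes(e+f)}$, restrict along $P_e(n)\otimes P_f(n)\hookrightarrow P_{e+f}(n)$, and match $\mathfrak S_n(\lambda[n])$-isotypic components against the stable Kronecker decomposition. Your additional care about the reduction to $\delta = n \ge 2(e+f)$ (which the paper, in the Brauer case, passes over with ``we may assume'') and about converting restriction multiplicities to induction multiplicities via semisimple Frobenius reciprocity only makes the argument more complete than the paper's.
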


\subsection{Definitions of representation stability}

In \cite{CF}, Church and Farb originally defined for every partition $\lambda$ and every $n \ge |\lambda|+\lambda_1$ a partition $\lambda[n]$ of $n$ by simply adding a first row of boxes which is long enough. I.e.\ if $\lambda = (\lambda_1,\dots, \lambda_{\ell(\lambda)})$ then $\lambda[n] = (n-|\lambda|, \lambda_1, \dots, \lambda_{\ell(\lambda)})$. An $\FI$-module $V$ is then called representation stable if it satifies the following three conditions.
\begin{description}
\item[Injectivity] The canonical map $\phi\colon V_n \to V_{n+1}$ is injective for all large enough $n\in\mathbb N$.
\item[Surjectivity] The induced map $\Ind_{\mathfrak S_n}^{\mathfrak S_{n+1}} \phi\colon \Ind_{\mathfrak S_n}^{\mathfrak S_{n+1}}V_n \to V_{n+1}$ is surjective for all large enough $n\in \mathbb N$.
\item[Multiplicity stability] If we write
\[ V_n \cong \bigoplus_{\lambda} \mathfrak S_n(\lambda[n])^{\oplus c_{\lambda,n}}\]
then $c_{\lambda,n}$ is independent of $n$ for all large enough $n\in \mathbb N$. 
\end{description}

For $\CA$-modules we easily find analogues of the conditions injectivity and surjectivity. For multiplicity stability we make the following definitions. Let $V$ be a $\CTL$-module, then we can write
\[ V_n \cong \bigoplus_{\lambda} \TL_n({\lambda[n]})^{\oplus c_{\lambda,n}}\]
with partitions $\lambda$ with $\ell(\lambda) \le 1$. We say that $V$ is multiplicity stable if $c_{\lambda,n}$ is independent of $n$ for all large enough $n\in \mathbb N$. Let $V$ be a $\CBr$-module, then we can write
\[ V_n \cong \bigoplus_{\lambda} \bigoplus_{i\le \lfloor\frac n2\rfloor} \Br_n{(\lambda[n-2i])}^{\oplus c_{\lambda,i,n}}.\]
We say that $V$ is multiplicity stable if $c_{\lambda,i,n}$ is independent of $n$ for all large enough $n\in \mathbb N$. Let $V$ be a $\CP$-module, then we can write
\[ V_n \cong  \bigoplus_{\lambda\in \Pi_{P_n}} P_n{(\lambda)}^{\oplus c_{\lambda,n}}.\]
We say that $V$ is multiplicity stable if $c_{\lambda,n}$ is independent of $n$ for all large enough $n\in \mathbb N$. 

\begin{Def}\label{def:repstab}
We call a $\CA$-module \emph{representation stable} if it satifies injectivity, surjectivity, and multiplicity stability.
\end{Def}

Next we will introduce the stability degree of $\CA$-modules.  We first make the observation that because of the embedding \eqref{eq:embedding} the tensor product $\mathbb C\otimes_{A_{n-m}} V_n$ is an $A_{m}$-module for any $A_n$-module $V_n$. Furthermore $\phi\colon V_n \to V_{n+1}$ induces an $A_m$-map
\[\xymatrix{ \displaystyle \mathbb C \tens[A_{n-m}] V_n \ar[r] & \displaystyle \mathbb C\tens[A_{n-m}] V_{n+1} \ar@{->>}[r] & \displaystyle \mathbb C\tens[A_{n+1-m}] V_{n+1}.}\]
Here we use the inclusion $A_{n-m}\otimes A_{1} \inject A_{n+1-m}$. 

\begin{Def}
Let $\tau_{n,a}$ be the functor $\tau_{n,a} V_n = \mathbb C \otimes_{A_{n-a}} V_n$ from $A_n$-modules to $A_a$-modules. We say a $\CA$-module has injectivity degree, surjectivity degree, or stability degree $\le s$ if the maps
\[ \xymatrix{ \tau_{n+a,a} V_{n+a} \ar[r]^<<<<<{\phi_*} & \tau_{n+a+1,a} V_{n+a+1}} \]
is injective, surjective, or bijective (resp.) for all nonnegative integers $a$ and all $n\ge s$.
\end{Def}

\begin{rem}\label{rem:tau}
Note that if
\[ \Res_{A_{a}\otimes A_{n-a}}^{A_n} V_n \cong \bigoplus W_i\otimes W'_i,\]
with simple $A_{n-a} \otimes A_a$-modules $W_i\otimes W'_i$, then
\[ \tau_{n,a}V_n \cong \bigoplus_{W'_i \text{ trivial}} W_i.\]
\end{rem}

\begin{thm}\label{thm:stabdegM(m)}
Let $M(m)$ be the $\CA$-module defined by $M(m)_n = \Hom_{\CA}(m,n)$. Then $M(m)$ has injectivity degree $\le0$, for $\CA\in \{\CTL,\CBr\}$ surjectivity degree $\le m$, and for $\CA=\CP$ surjectivity degree $\le 2m$.
\end{thm}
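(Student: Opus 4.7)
First I would set up a diagrammatic basis for the source and target of $\phi_*$. Because the left action of $A_n = A_{(n+a)-a}$ used by $\tau_{n+a,a}$ (via right embedding, on the bottom $n$ rows of the LHS) commutes with the right action of $A_{n+a-m}$ from the $\Hom$-structure (on the bottom $n+a-m$ rows of the RHS), we have
\[ \tau_{n+a,a} M(m)_{n+a} \;\cong\; A_{n+a} \tens[A_n\otimes A_{n+a-m}] (R\otimes R),\]
and running the argument of \autoref{prop:HomBr} (or \autoref{prop:HomP}) on both sides simultaneously yields a free $R$-module basis. For $\CTL$ and $\CBr$ it is indexed by (non-crossing) perfect matchings on $a$ distinguished left dots, $m$ distinguished right dots, a left blob of capacity $n$, and a right blob of capacity $n+a-m$, with no edges internal to either blob; for $\CP$ it is indexed by partitions of the $a+m$ distinguished elements together with a choice $b^L\in\{0,1\}$ and $b^R\in\{0,1\}$ on each block (counting blob slots in that block) plus a number $p\ge 0$ of ``pure'' blocks containing one slot of each blob, subject to $\sum b^L+p=n$ and $\sum b^R+p=n+a-m$.

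Next I would identify $\phi_*$ on this basis. The element $1_{A_{n+a+1}}\otimes_{A_1} 1_R$ inducing $\phi_{n+a}$ adjoins a straight-through strand (respectively, for $\CP$, a new pair block $\{-(n+a+1),n+a+1\}$) at the very bottom of the diagram; after the further collapse to $\tau_{n+a+1,a}$ in the target, the new bottom-LHS endpoint is absorbed into the enlarged left blob and its partner into the enlarged right blob. Hence on basis elements $\phi_*$ increases both blob capacities by one and adjoins a single new left-blob to right-blob edge (respectively a single new pure block for $\CP$), leaving all other combinatorial data unchanged. Injectivity is then immediate for every $n\ge 0$, since the operation is inverted by deleting one such edge or pure block and decrementing the capacities.

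For surjectivity it suffices to show that under the stated bound on $n$ no basis element of $\tau_{n+a+1,a} M(m)_{n+a+1}$ can avoid the added feature. For $\CTL$ and $\CBr$, absence of a blob-to-blob edge would force all $(n+1)+(n+a+1-m)$ blob endpoints to terminate on the $a+m$ visible endpoints, giving $(n+1)+(n+a+1-m)\le a+m$, i.e.\ $n\le m-1$, which contradicts $n\ge m$. For $\CP$, absence of a pure block forces the partition of the $a+m$ visible elements to have at least $\max(n+1,\,n+a+1-m)$ blocks; since such a partition has at most $a+m$ blocks, the binding constraint becomes $n+a+1-m\le a+m$, i.e.\ $n\le 2m-1$, contradicting $n\ge 2m$. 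The only genuinely subtle step is the two-sided basis description; once that is in place, the identification of $\phi_*$ and the pigeonhole counts are routine, and the looser bound $2m$ for $\CP$ versus $m$ for $\CBr$ and $\CTL$ reflects that a single block of the underlying partition can simultaneously absorb one slot from each blob, doubling the available landing space.
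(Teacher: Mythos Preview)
Your proposal is correct and follows essentially the same approach as the paper: both arguments set up the two-blob diagrammatic basis for $\mathbb C\otimes_{A_n}A_{n+a}\otimes_{A_{n+a-m}}\mathbb C$, identify $\phi_*$ as adjoining a single blob-to-blob connection (or pure block in the partition case), and derive the surjectivity bounds via the same pigeonhole count on how many blob endpoints/slots can land on the $a+m$ visible dots. Your basis description for $\CP$ is a bit more explicitly parametrized than the paper's, and you phrase the $\CP$ count in terms of the number of blocks rather than the number of dots, but the content is identical.
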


\begin{proof}
We need to investigate
\[ \xymatrix{ \tau_{n+a,a} M(m)_{n+a} \ar[r]^<<<<<{\phi_*} & \tau_{n+a+1,a} M(m)_{n+a+1}}. \]
We first will find a basis of
\[ \tau_{n+a,a} M(m)_{n+a} = \mathbb C \tens[A_n] A_{n+a} \tens[A_{n+a-m}] \mathbb C.\]
Similar to the basis of $A_n\otimes_{A_{n-m}} \mathbb C$, we can describe the basis elements by diagrams. In this case we have two blobs, an $n$-blob on the LHS and an $(n+a-m)$-blob on the RHS. The restrictions for the blobs are exactly as before. Concretely this means in each block of the partition each blob can appear at most once, an $x$-blob must appear in exactly $x$ partitions, and a blob cannot appear in a singleton. The map $\phi_*$ can be described by adding a partition that exactly consists both blobs. 

\begin{figure}[ht]
\centering
\[
\begin{tikzpicture}[x=1.5cm,y=-.5cm,baseline=-1.05cm]

\node[b] (a1) at (0,0) {$2$};

\node[v] (a3) at (0,1) {};
\node[v] (a4) at (0,2) {};
\node[v] (a5) at (0,3) {};

\node[b] (b1) at (1,0) {$3$};
\node[v] (b4) at (1,2) {};
\node[v] (b5) at (1,3) {};

\draw[e] (a1) to[out=0, in=180] (b1);
\draw[e] (a1) to[out=-20, in=0]   (a4);
\draw[e] (b1) to[out=220, in=180] (b4);
\draw[e] (a5) to[out=0, in=0] (a3);
\draw[e] (b1) to[out=200,in=0] (a3);
\draw[e] (b5) to[out=180,in=270] (.5,0);

\end{tikzpicture}
\qquad
\longmapsto\qquad
\begin{tikzpicture}[x=1.5cm,y=-.5cm,baseline=-1.05cm]
\node[b] (a1) at (0,0) {$3$};

\node[v] (a3) at (0,1) {};
\node[v] (a4) at (0,2) {};
\node[v] (a5) at (0,3) {};

\node[b] (b1) at (1,0) {$4$};
\node[v] (b4) at (1,2) {};
\node[v] (b5) at (1,3) {};

\draw[e] (a1) to[out=0, in=180] (b1);
\draw[e] (a1) to[out=20, in=160] (b1);
\draw[e] (a1) to[out=-20, in=0]   (a4);
\draw[e] (b1) to[out=220, in=180] (b4);
\draw[e] (a5) to[out=0, in=0] (a3);
\draw[e] (b1) to[out=200,in=0] (a3);
\draw[e] (b5) to[out=180,in=270] (.5,0);

\end{tikzpicture}
\]
\caption[$\phi_*$]{$\begin{array}{l}\phantom{=}\phi_*( 1 \tens[P_{2}] \{ \{-5,1,5\},\{-4,2\}, \{-3,-1,4\},\{2,3\}\} \tens[P_3] 1 ) \\ = 1 \tens[P_{3}] \{\{-6,6\}, \{-5,1,5\},\{-4,2\}, \{-3,-1,4\},\{2,3\}\} \tens[P_4] 1\end{array}$}
\label{fig:Ex taublob}
\end{figure}

$\phi_*$ is then always injective. For the Temperley--Lieb and the Brauer case, assume that $d\in \tau_{n+a+1,a} M(m)_{n+a+1}$ is a diagram that is not in the image of $\phi_*$. That means there is no edge between the two blobs. Thus the $(n+1)+ (n+a+1-m)$ edges coming out of the blobs must be connected to one of the $a+m$ dots each. This gives the inequality
\[ (n+1)+ (n+a+1-m) = 2n +2 +a - m \le a+m \iff 2n +2 \le 2m \iff n<  m.\]
In particular $\phi_*$ is surjective for $n \ge m$.

For the partition algebra case, if $d$ is not in the image of $\phi_*$, there is no $2$-block that connects the two blobs. Thus each block that contains a blob must also connected to a dot. Therefore there must be at least $n+a+1-m$ dots, one for every block that contains the $(n+a+1-m)$-blob on the RHS. This implies the inequality
\[ n+a+1-m \le a+m \iff n +1 \le 2m \iff n < 2m .\]
In particular $\phi_*$ is surjective for $n \ge 2m$.
\end{proof}

\subsection{From stability degree to representation stability}

\begin{prop}\label{prop:TLtau}
Let $m\in \mathbb N$ and $\lambda = (m)$. The following statements are true for all $n\ge 2m$.
\begin{enumerate}
\item $\tau_{n,a}\TL_n({\lambda[n]}) = 0$ if  $a < m$
\item $\tau_{n,m}\TL_n({\lambda[n]}) \cong \TL_n({\lambda})$
\item For fixed $a\in \mathbb N$ the $\TL_a$-modules $\tau_{n,a}\TL_n({\lambda[n]})$ are independent of $n\ge a+m$.
\end{enumerate}
\end{prop}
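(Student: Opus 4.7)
The plan is to reduce all three parts to a single Littlewood--Richardson calculation by combining \autoref{thm:TL branching} with \autoref{rem:tau}. Since the trivial $\TL_{n-a}$-module is the one-dimensional module $\TL_{n-a}((n-a))$, these two results together give
\[
\tau_{n,a}\TL_n(\lambda[n]) \;\cong\; \bigoplus_\alpha \TL_a(\alpha)^{\oplus c_{\alpha,(n-a)}^{\lambda[n]}},
\]
where $\alpha$ ranges over two-row partitions of $a$ and $c$ denotes the Littlewood--Richardson coefficients.

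Because $\lambda = (m)$ and $n \ge 2m$, we have $\lambda[n] = (n-m, m)$, and the second partition $(n-a)$ is a single row. Hence Pieri's rule applies: $c_{\alpha,(n-a)}^{(n-m,m)}$ equals $1$ when $(n-m,m)/\alpha$ is a horizontal strip, and $0$ otherwise. For a shape with at most two rows, the horizontal-strip condition $\alpha_i \ge \mu_{i+1}$ reduces to the single requirement $\alpha_1 \ge m$. Writing $\alpha = (a-k, k)$ and imposing also $\alpha \subseteq (n-m, m)$ and $|\alpha| = a$, the set of admissible $k$ is
\[
\max(0,\, a - n + m) \;\le\; k \;\le\; \min(a - m,\, m).
\]
(The partition constraint $a-k \ge k$ is automatic, since $\min(a-m,m) \le \lfloor a/2\rfloor$ in both cases $a \le 2m$ and $a > 2m$.)

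Each assertion now follows by inspecting this range. For part (a), $a < m$ makes the upper bound negative, so no $\alpha$ contributes and $\tau_{n,a}\TL_n(\lambda[n]) = 0$. For part (b), $a = m$ forces $k = 0$ and hence $\alpha = (m) = \lambda$, yielding the single summand $\TL_m(\lambda)$. For part (c), the hypothesis $n \ge a + m$ ensures $a - n + m \le 0$, so the range becomes $0 \le k \le \min(a - m, m)$, which is independent of $n$; thus so is the resulting multiset of $\TL_a(\alpha)$ summands.

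I do not foresee any serious obstacle. The only careful step is translating the horizontal-strip condition on a two-row shape into the single inequality $\alpha_1 \ge m$ and then verifying that the lower bound $a - n + m$ becomes inactive once $n \ge a + m$; everything else is a direct application of the branching rule and \autoref{rem:tau}.
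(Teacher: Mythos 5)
Your proof is correct and follows essentially the same route as the paper: \autoref{rem:tau} plus the branching rule of \autoref{thm:TL branching} reduce everything to the Pieri coefficient $c_{\alpha,(n-a)}^{(n-m,m)}$, and parts (a)--(c) are read off from the admissible range of $\alpha=(a-k,k)$. One point in your favor: your range $\max(0,\,a-n+m)\le k\le\min(a-m,\,m)$ is the correct one, whereas the paper's printed list in part (c), with upper limit $\min(m,\lfloor a/2\rfloor)$, overcounts when $m\le a<2m$ (e.g.\ for $a=4$, $m=3$ it includes $(2,2)$, which violates the horizontal-strip condition $\alpha_1\ge m$); since both ranges are independent of $n\ge a+m$, the slip does not affect the truth of the statement.
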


\begin{proof}
Because of  \autoref{rem:tau} and the branching rule in  \autoref{thm:TL branching}
\[ [\TL_a(\mu),\tau_{n,a} \TL_n(\lambda[n])] = c_{\mu,(n-a)}^{\lambda[n]}.\]
Pieri's formula (see \cite[(A.7)]{FH}) says that
\[ c^{\lambda}_{\mu,(a)} = \begin{cases} 1 & \text{ if $\mu$ can be obtained from $\lambda$ be removing $a$ boxes but at most one per column}\\ 0 &\text{ otherwise.}\end{cases}\]
So to be nonzero, it must be
\[ n-m \ge n-a \Longleftrightarrow a \ge m.\]
This proves (a). Assuming $a = m$, we also see that $\mu = (m) = \lambda$. This proves (b). If $n-a\ge m$ there are exactly $\min(m, \lfloor a/2\rfloor) + 1$ many different partitions that can be obtained from $\lambda[n]$ by removing $n-a$ boxes but at most one per column. These are
\[ (a), (a-1,1), \dots, (a- \min(m, \lfloor a/2\rfloor) , \min(m, \lfloor a/2\rfloor) ).\] 
This proves (c).
\end{proof}

\begin{prop}\label{prop:Brauertau} Let $\mu\in \Pi_{\Br_m}$ and $|\mu|+\mu_1 \le n -2i$, then the following statements hold.
\begin{enumerate}
\item $[ \Br_m(\lambda), \tau_{n,m}\Br_n(\mu[n-2i])]= 0$ if $|\lambda| < m-2i$.
\item $[ \Br_m(\lambda), \tau_{n,m}\Br_n(\mu[n-2i])]= 0$ if $|\lambda| = m-2i$ and $|\mu|> m-2i$.
\item $[ \Br_m(\lambda), \tau_{n,m}\Br_n(\mu[n-2i])]= \delta_{\lambda \mu}$ if $|\lambda| =|\mu|= m-2i$.
\item $\tau_{n,m}\Br_n(\mu[n-2i])$ is independent of $n\ge  m+\mu_1+i$.
\end{enumerate}
\end{prop}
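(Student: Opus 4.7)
The plan is to reduce the multiplicity $[\Br_m(\lambda), \tau_{n,m}\Br_n(\mu[n-2i])]$ to a count of intermediate partitions via \autoref{cor:Brauerbranching} and Pieri's rule. By \autoref{rem:tau}, this multiplicity equals $d^{\mu[n-2i]}_{\lambda,(n-m)}$, since the trivial $\Br_{n-m}$-module corresponds to the single-row partition $(n-m)$. In the expansion
\[ d^{\mu[n-2i]}_{\lambda,(n-m)} \;=\; \sum_{\alpha,\beta,\gamma} c^{\mu[n-2i]}_{\alpha\beta}\, c^{\lambda}_{\alpha\gamma}\, c^{(n-m)}_{\beta\gamma}, \]
Pieri's rule forces $\beta$ and $\gamma$ to be single rows whose sizes sum to $n-m$; two further applications of Pieri collapse the identity to
\[ [\Br_m(\lambda), \tau_{n,m}\Br_n(\mu[n-2i])] \;=\; \#\bigl\{\alpha : |\alpha| = m-i-k,\ \lambda/\alpha \text{ and } \mu[n-2i]/\alpha \text{ are horizontal strips}\bigr\}, \]
where $k := (m - |\lambda|)/2$; the two strips have sizes $i-k$ and $n-i-m+k$ respectively.

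Parts (a)--(c) follow quickly from this reformulation. For (a), $|\lambda| < m-2i$ gives $k > i$, so $|\gamma| = i-k < 0$ is impossible. For (b) and (c), $k = i$ forces $\gamma = \emptyset$ and hence $\alpha = \lambda$; the horizontal-strip interlacing $\lambda_j \ge \mu[n-2i]_{j+1} = \mu_j$ then yields $\mu \subset \lambda$, which is incompatible with $|\mu| > |\lambda|$ in (b), and forces $\lambda = \mu$ in (c). The hypothesis $|\mu|+\mu_1 \le n-2i$ ensures in (c) that $\mu[n-2i]/\mu$ is in fact a horizontal strip, giving the claimed multiplicity $1$.

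The main work is in part (d). The only $n$-dependent condition on $\alpha$ is $\alpha_1 \le \mu[n-2i]_1 = n - 2i - |\mu|$; the remaining interlacing conditions involve only $\mu$ and $\lambda$. It therefore suffices to bound $\alpha_1$ uniformly in $n$. From $\mu[n-2i]/\alpha$ being a horizontal strip one has $\alpha_j \ge \mu[n-2i]_{j+1} = \mu_j$ for every $j \ge 1$, so
\[ \alpha_1 \;=\; |\alpha| - \sum_{j\ge 2}\alpha_j \;\le\; (m-i-k) - (|\mu|-\mu_1) \;=\; m - i - k - |\mu| + \mu_1, \]
which is maximized at $k = 0$ giving the uniform bound $\alpha_1 \le m - i - |\mu| + \mu_1$. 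This upper bound is at most $n-2i-|\mu|$ precisely when $n \ge m + \mu_1 + i$, so above this threshold the set of admissible $\alpha$---and hence every multiplicity $[\Br_m(\lambda),\tau_{n,m}\Br_n(\mu[n-2i])]$---is independent of $n$, as required.
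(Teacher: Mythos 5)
Your proof is correct and follows essentially the same route as the paper: both reduce the multiplicity to $d^{\mu[n-2i]}_{(n-m),\lambda}$ via \autoref{rem:tau} and \autoref{cor:Brauerbranching}, collapse the sum using $c^{(n-m)}_{\beta\gamma}\neq 0$ only for two single rows, and then apply Pieri's rule, with part (d) resting on the identical bound $\alpha_1 \le |\alpha| - |\mu| + \mu_1$ coming from the interlacing $\alpha_j \ge \mu_j$. Your reparametrization by $k=(m-|\lambda|)/2$ instead of the paper's $l$ (related by $l=i-k$) and your explicit verification in (c) that $\mu[n-2i]/\mu$ is a horizontal strip under the hypothesis $|\mu|+\mu_1\le n-2i$ are only cosmetic refinements of the same argument.
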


\begin{proof}
Because of  \autoref{rem:tau} and the branching rule in  \autoref{cor:Brauerbranching}
\[ [ \Br_m(\lambda), \tau_{n,m}\Br_n(\mu[n-2i])] = d_{(n-m),\lambda}^{\mu[n-2i]} = \sum_{\alpha, \beta, \gamma} c_{\alpha \beta}^{\mu[n-2i]} c_{\alpha \gamma}^{\lambda} c_{\beta\gamma}^{(n-m)}.\]
The Littlewood-Richardson coefficient $c_{\beta\gamma}^{(n-m)}=1$ if $\beta=(n-m-l)$ and $\gamma =(l)$ for some $0\le l \le n-m$ and it is zero otherwise. In other words
\[ d_{(n-m),\lambda}^{\mu[n-2i]} = \sum_{\alpha, 0\le l \le n-m} c_{\alpha,(n-m-l)}^{\mu[n-2i]} c_{\alpha, (l)}^{\lambda}.\]
We also calculate
\[ |\lambda| = |\alpha| + l = (n-2i) -(n-m-l) +l = m+2l-2i \ge m-2i.\]
This proves (a). 

Now assuming that $|\lambda|= m-2i$, we know $l =0$ and $\alpha = \lambda$. That means  
\[d_{(n-m),\lambda}^{\mu[n-2i]} =  c_{\lambda,(n-m)}^{\mu[n-2i]}.\]
From Pieri's formula we know that $c_{\lambda,(n-m)}^{\mu[n-2i]}=0$ unless the first entry of $\mu[n-2i]$ which is $n-2i-|\mu|$ is at least $n-m$. In other words $|\mu| \le m-2i$. This proves (b).

Using Pieri's formula again we see that $c_{\lambda,(n-m)}^{\mu[n-2i]}= \delta_{\mu\lambda}$ if $n-2i-|\mu| = n-m$. This proves (c).

For (d), say $|\lambda| = m +2l -2i$ (this implies $l \le i$), then
\[ d_{(n-m),\lambda}^{\mu[n-2i]} = \sum_{\alpha} c_{\alpha,(n-m-l)}^{\mu[n-2i]} c_{\alpha, (l)}^{\lambda}.\]
The set of partitions $\alpha$ such that $c_{\alpha, (l)}^{\lambda}\neq 0$ is completely independent of $n$. From Pieri's formula $c_{\alpha,(n-m-l)}^{\mu[n-2i]}$ is fixed for $n-m-l \ge \mu_1$. Thus $d_{(n-m),\lambda}^{\mu[n-2i]} $ is independent of
\[ n \ge m+ \mu_1 + i \ge m + \mu_1 + l.\qedhere\]
\end{proof}

\begin{prop}\label{prop:partition tau}
Let $\lambda \in \Pi_{P_n}$ then for all $n\ge a$
\[ \tau_{n,a} P_n(\lambda) \cong \begin{cases} P_a(\lambda) & \text{ if $\lambda \in \Pi_{P_a}$}\\ 0 &\text{ otherwise.}\end{cases}\]
\end{prop}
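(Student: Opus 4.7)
The plan is to follow the proof of \autoref{prop:Brauertau} step by step, using \autoref{rem:tau} and the reduced Kronecker branching rule from the preceding subsection.

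The trivial $P_{n-a}$-module is $P_{n-a}(\emptyset)$, indexed by the empty partition. With this identification, \autoref{rem:tau}, Frobenius reciprocity (in the semisimple setting), and the branching rule together give, for each $\mu \in \Pi_{P_a}$,
\[ [P_a(\mu), \tau_{n,a} P_n(\lambda)] = [P_a(\mu) \otimes P_{n-a}(\emptyset), \Res_{P_a \otimes P_{n-a}}^{P_n} P_n(\lambda)] = \bar g_{\lambda, \mu, \emptyset}. \]
Unpacking the reduced Kronecker coefficient and using that $\mathfrak S_N((N)) = \mathfrak S_N(\emptyset[N])$ is the trivial $\mathfrak S_N$-representation (so tensoring with it acts as the identity),
\[ \bar g_{\lambda, \mu, \emptyset} = [\mathfrak S_N(\mu[N]) \otimes \mathfrak S_N((N)), \mathfrak S_N(\lambda[N])] = [\mathfrak S_N(\mu[N]), \mathfrak S_N(\lambda[N])] = \delta_{\mu, \lambda}. \]

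Combining, for each $\mu \in \Pi_{P_a}$ the multiplicity of $P_a(\mu)$ in $\tau_{n,a} P_n(\lambda)$ equals $\delta_{\mu, \lambda}$. Hence $\tau_{n,a} P_n(\lambda) \cong P_a(\lambda)$ when $\lambda \in \Pi_{P_a}$, and is zero otherwise (since no $\mu \in \Pi_{P_a}$ then satisfies $\mu = \lambda$). The only substantive step is the collapse of $\bar g_{\lambda, \mu, \emptyset}$ to $\delta_{\mu, \lambda}$, which uses that the trivial representation is the unit for tensor products of $\mathfrak S_N$-representations; everything else is bookkeeping parallel to the Brauer case.
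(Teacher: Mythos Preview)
Your proof is correct and follows essentially the same approach as the paper: compute the multiplicity of $P_a(\mu)$ in $\tau_{n,a}P_n(\lambda)$ via \autoref{rem:tau} and the branching rule, then observe that the relevant reduced Kronecker coefficient $\bar g_{\lambda,\mu,\emptyset}$ collapses to $\delta_{\mu,\lambda}$ because $\emptyset[N]=(N)$ indexes the trivial $\mathfrak S_N$-representation. Your version is slightly more explicit in invoking Frobenius reciprocity to pass from the induction-form branching rule to the restriction multiplicity, but the substance is identical.
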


\begin{proof}
The multiplicity of $P_a(\mu)$ in $\tau_{n,a} P_n(\lambda)$ is given by the reduced Kronecker coefficient 
\[ \bar g_{\emptyset, \mu}^{\lambda} = [ \mathfrak S_{n}(\emptyset[2n]) \otimes \mathfrak S_{2n}(\mu[2n]), \mathfrak S_{2n}(\lambda[2n])].\]
But this is $\delta_{\mu[2n],\lambda[2n]}= \delta_{\mu,\lambda}$ because $\emptyset[2n] = (2n)$ which indexes the trivial $\mathfrak S_{2n}$-representation.
\end{proof}

\begin{prop}\label{prop:finpres implies stabdeg}
Let $V$ be a finitely presented $\CA$-module generated in degree $g$ and with relations generated in degree $r$, then $V$ has stability degree $\le \max(\bar g,\bar r)$, where $\bar g = g$ and $\bar r = r$ for $\CA \in \{\CTL, \CBr\}$ and $\bar g = 2g$ and $\bar r = 2r$ for $\CA= \CP$.
\end{prop}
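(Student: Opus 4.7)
The plan is to reduce the assertion to \autoref{thm:stabdegM(m)} via a standard diagram chase applied to a finite presentation of $V$. Since $V$ is finitely presented, I would start by choosing an exact sequence
\[ F_1 \xrightarrow{c_1} F_0 \xrightarrow{c_2} V \to 0 \]
with $F_0 = \bigoplus_j M(g_j)$ where every $g_j \le g$, and $F_1 = \bigoplus_i M(r_i)$ where every $r_i \le r$. Applying the functor $\tau_{k,a} = \mathbb{C} \otimes_{A_{k-a}} (-)$, which is right exact (as a tensor product) and commutes with direct sums, in the two consecutive degrees $k = n+a$ and $k = n+a+1$ and then the natural transformation $\phi_*$ between them produces a commutative ladder with exact rows whose right-hand vertical map is exactly the one I need to show is bijective.

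Next I would transfer the stability bounds for the representables to the rows of the ladder. By \autoref{thm:stabdegM(m)}, each $M(m)$ has injectivity degree $\le 0$ and surjectivity degree $\le \bar m$; additivity of $\tau_{k,a}$ and $\phi_*$ then gives that the middle vertical map $\tau_{n+a,a}F_0 \to \tau_{n+a+1,a}F_0$ is injective for all $n \ge 0$ and surjective for $n \ge \bar g$, and likewise the left vertical map on $F_1$ is injective always and surjective for $n \ge \bar r$. Fix $n \ge \max(\bar g, \bar r)$. Surjectivity of $\phi_* \colon \tau_{n+a,a}V_{n+a} \to \tau_{n+a+1,a}V_{n+a+1}$ is then immediate from surjectivity of the middle column together with right exactness of the rows.

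For injectivity, I would carry out the customary four-term chase. Given $v \in \tau_{n+a,a}V_{n+a}$ with $\phi_*(v) = 0$, lift $v$ to some $f_0 \in \tau_{n+a,a}F_0$ via surjectivity of $c_2$. Its image $f_0' \in \tau_{n+a+1,a}F_0$ maps to $0$ in $\tau_{n+a+1,a}V_{n+a+1}$, so by exactness of the lower row $f_0' = c_1(f_1')$ for some $f_1' \in \tau_{n+a+1,a}F_1$. Surjectivity of the left vertical (which is where the hypothesis $n \ge \bar r$ is used) lets me write $f_1'$ as the image of some $f_1 \in \tau_{n+a,a}F_1$; then $f_0 - c_1(f_1)$ is killed by the middle vertical map and is therefore itself zero by injectivity of that map. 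Hence $f_0 = c_1(f_1)$, and so $v = c_2(f_0) = c_2 c_1(f_1) = 0$. The main thing to get right, rather than any real obstacle, is the bookkeeping that the middle vertical injectivity holds unconditionally (because the injectivity degree of each $M(m)$ is $\le 0$, so no lower bound on $n$ is needed there), so that the full stability threshold is simply $\max(\bar g, \bar r)$ as claimed.
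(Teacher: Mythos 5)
Your proof is correct, and it effects the same reduction to \autoref{thm:stabdegM(m)} as the paper, but the homological engine is genuinely different. The paper works with the short exact sequence $0 \to K \to P \to V \to 0$, where $P$ is a sum of $M(m)$'s with $m \le g$ and $K$ is finitely generated in degree $r$, and crucially invokes \emph{exactness} of $\tau_{n,a}$ (``tensoring is exact in the semisimple setting'') to obtain a ladder of short exact sequences; it then applies the Four Lemma twice, getting surjectivity of $\phi_V$ for $n \ge \bar g$ from surjectivity of $\phi_P$, and injectivity of $\phi_V$ for $n \ge \bar r$ from injectivity of $\phi_P$ together with surjectivity of $\phi_K$ (the latter obtained by covering $K$ by a sum of $M(r_i)$'s, $r_i \le r$). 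You instead splice both steps into a single two-step presentation $F_1 \to F_0 \to V \to 0$ by representables and chase that ladder by hand, and your chase uses only \emph{right} exactness of $\tau_{k,a}$: you never need exactness at the $\tau F_1$ spot, only exactness of the rows at $\tau F_0$ and at $\tau V$, both of which any right exact functor preserves. Your bookkeeping is also sound: the middle vertical is injective unconditionally (injectivity degree $\le 0$ of each $M(g_j)$), surjective for $n \ge \bar g$, and the left vertical is surjective for $n \ge \bar r$, which reproduces exactly the threshold $\max(\bar g, \bar r)$. The trade-off: the paper's route is shorter given the standing semisimplicity hypotheses over $\mathbb C$ (where $\tau$ is exact anyway), while yours is marginally more robust --- it would yield the same bound over any base ring for which the conclusion of \autoref{thm:stabdegM(m)} holds, with no semisimplicity assumption, since the diagram-basis computation there is characteristic-free.
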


\begin{proof}
Let 
\[ \xymatrix{
0 \ar[r] & K \ar[r] & P \ar[r] & V \ar[r] & 0
}\]
be a short exact sequence where $P$ is direct sum of $M(m)$'s with $m\le g$ and $K$ is a  $\CA$-module finitely generated in degree $r$. Because tensoring is exact in the semisimple setting, we get the following exact commutative diagram.
\[ \xymatrix{
0 \ar[r] & \tau_{n+a,a} K_{n+a} \ar[r]\ar[d]^{\phi_K} &\tau_{n+a,a} P_{n+a} \ar[r]\ar[d]^{\phi_P} & \tau_{n+a,a}V_{n+a} \ar[r]\ar[d]^{\phi_V} & 0\\
0 \ar[r] & \tau_{1+n+a,a} K_{1+n+a} \ar[r] &\tau_{1+n+a,a} P_{1+n+a} \ar[r] & \tau_{1+n+a,a}V_{1+n+a} \ar[r] & 0
}\]
We know from  \autoref{thm:stabdegM(m)} that $\phi_P$ is injective for all $n\ge0$ and surjective for all $n\ge \bar g$. From the Four Lemma, it is immediate that also $\phi_V$ is surjective for $n \ge \bar g$. Similarly $\phi_K$ is surjective for $n \ge \bar r$. Using the Four Lemma again, we see that $\phi_V$ is injective for $n \ge \bar r$. Thus it is bijective for $n\ge \max(\bar g, \bar r)$, which prove the assertion.
\end{proof}

\begin{prop}\label{prop:TLweight}
Let $V$ be a $\CTL$-module generated in degree $g$ and $\TL_n(\lambda[n])$ is a constituent of $V_n$, then $|\lambda|  \le g$.
\end{prop}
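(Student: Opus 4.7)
The plan is to reduce to the representable modules $M(m)$ and then compute their $\TL_n$-constituents via Frobenius reciprocity and Pieri's rule.

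Since $V$ is generated in degree $g$, there is a surjection $\bigoplus_i M(m_i)\twoheadrightarrow V$ with every $m_i\le g$. Because $\TL_n$ is semisimple under our standing hypotheses (\autoref{thm:TL repthy}), this surjection splits at each degree, so every irreducible constituent of $V_n$ is already a constituent of some $M(m_i)_n$. It therefore suffices to prove: if $\TL_n(\lambda[n])$ is a constituent of $M(m)_n$, then $|\lambda|\le m$.

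The module $M(m)_n = \TL_n\otimes_{\TL_{n-m}}\mathbb{C}$ is precisely the induction along the right embedding of the one-dimensional trivial $\TL_{n-m}$-module, which is $\TL_{n-m}((n-m))$. By Frobenius reciprocity,
\[
[M(m)_n,\TL_n(\mu)] \;=\; [\Res_{\TL_{n-m}}^{\TL_n}\TL_n(\mu),\, \TL_{n-m}((n-m))].
\]
Restricting further through the larger subalgebra $\TL_m\otimes \TL_{n-m}\hookrightarrow \TL_n$ and applying the branching formula (\autoref{thm:TL branching}), then forgetting the $\TL_m$-action, gives
\[
\Res_{\TL_{n-m}}^{\TL_n}\TL_n(\mu) \;\cong\; \bigoplus_{\alpha\in\Pi_{\TL_m},\,\beta\in\Pi_{\TL_{n-m}}} c_{\alpha\beta}^{\mu}\,\dim\TL_m(\alpha)\cdot\TL_{n-m}(\beta),
\]
so the multiplicity above equals $\sum_{\alpha\in\Pi_{\TL_m}} c_{\alpha,(n-m)}^{\mu}\dim\TL_m(\alpha)$.

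Now specialize to $\mu = \lambda[n]$. Since every partition in $\Pi_{\TL_n}$ has length at most $2$, the partition $\lambda$ has at most one row; write $k=|\lambda|$, so that $\mu = (n-k,k)$. By Pieri's rule, $c_{\alpha,(n-m)}^{(n-k,k)}\ne 0$ precisely when $\alpha=(a_1,a_2)$ is a partition with $a_1+a_2=m$ and $(n-k,k)/\alpha$ is a horizontal strip, which forces $a_1\ge k$. Consequently $m\ge a_1\ge k=|\lambda|$, which is the desired bound.

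The main obstacle is essentially one of bookkeeping, namely correctly identifying $M(m)_n$ as an induction from the one-dimensional module $\TL_{n-m}((n-m))$; once this is pinned down, the stated combinatorics of branching and Pieri settle everything. A natural alternative approach via the $\tau$-functor and \autoref{prop:TLtau} appears to fail, because $\tau_{n,a}M(m)_n$ need not vanish when $a>m$, so $\tau$ does not directly detect the required bound on $|\lambda|$.
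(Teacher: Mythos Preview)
Your proof is correct and follows essentially the same approach as the paper: both reduce to $M(m)$, identify $M(m)_n$ as an induction of the trivial $\TL_{n-m}$-module, and then use the branching rule together with Pieri's formula to force $|\lambda|\le m$. The only cosmetic difference is that the paper writes $M(m)_n=\Ind_{\TL_m\otimes\TL_{n-m}}^{\TL_n}\TL_m\otimes\mathbb C$ and reads off the Littlewood--Richardson coefficient directly, whereas you pass through Frobenius reciprocity first; the final Pieri step is identical.
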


\begin{proof}
It suffices to prove that if $\TL_n(\lambda[n])$ is a constituent of $M(m)_n$, then $\lambda \le m$.

Recall that 
\[M(m)_n = \TL_n \otimes_{\TL_{n-m}} \mathbb C = \Ind_{\TL_m\otimes \TL_{n-m}}^{\TL_n} \TL_m\otimes \mathbb C.\]
Thus we need to consider the constituents $\TL_n(\lambda[n])$ of
\[ \Ind_{\TL_m\otimes \TL_{n-m}}^{\TL_n} \TL_m(\mu)\otimes \TL_{n-m}(n-m)\]
for all suitable partitions $\mu$. From the branching rule we see that
\[ [\TL_n(\lambda[n]), \Ind_{\TL_m\otimes \TL_{n-m}}^{\TL_n} \TL_m(\mu)\otimes \TL_{n-m}(n-m)]  = c_{(n-m),\mu}^{\lambda[n]}. \]
From Pieri's formula we see that $n-|\lambda| \ge n-m$ for this coefficient to be nonzero. Thus $|\lambda|\le m$.
\end{proof}

\begin{prop}\label{prop:Brauerweight}
Let $V$ be a $\CBr$-module generated in degree $g$ and $\Br_n(\lambda[n-2i])$ is a constituent of $V_n$, then $|\lambda| + i \le g$.
\end{prop}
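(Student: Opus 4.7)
The plan is to mirror the strategy of \autoref{prop:TLweight}. Since $V$ is generated in degree $g$, it is a quotient of a direct sum of modules $M(m)$ with $m \le g$, so it suffices to show that every constituent $\Br_n(\lambda[n-2i])$ of $M(m)_n$ satisfies $|\lambda|+i \le m$.

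I would rewrite
\[M(m)_n = \Br_n \tens[\Br_{n-m}] \mathbb C \cong \Ind_{\Br_m \otimes \Br_{n-m}}^{\Br_n}(\Br_m \otimes \mathbb C),\]
where $\Br_m$ appears as the left regular module and $\mathbb C = \Br_{n-m}(n-m)$ denotes the trivial $\Br_{n-m}$-module. Decomposing the regular representation of $\Br_m$ into irreducibles indexed by $\kappa \in \Pi_{\Br_m}$ and invoking Frobenius reciprocity together with the branching rule of \autoref{cor:Brauerbranching}, the multiplicity of $\Br_n(\lambda[n-2i])$ in $M(m)_n$ becomes a non-negative integer combination, over $\kappa \in \Pi_{\Br_m}$, of the coefficients
\[d^{\lambda[n-2i]}_{\kappa,(n-m)} \;=\; \sum_{\alpha,\beta,\gamma} c^{\lambda[n-2i]}_{\alpha\beta}\, c^{\kappa}_{\alpha\gamma}\, c^{(n-m)}_{\beta\gamma}.\]

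The key combinatorial input is that the outer partition $(n-m)$ has a single row, so Pieri's formula forces $\beta=(p)$ and $\gamma=(q)$ to be one-row partitions with $p+q = n-m$. From $c^{\lambda[n-2i]}_{\alpha,(p)} \ne 0$, Pieri (see \cite[(A.7)]{FH}) tells us $\lambda[n-2i]/\alpha$ is a horizontal strip, so $\alpha_k \ge \lambda[n-2i]_{k+1}$ for every $k \ge 1$, which immediately yields $|\alpha| \ge |\lambda|$. From $c^{\kappa}_{\alpha,(q)} \ne 0$ we obtain $|\kappa| = |\alpha| + q$, and since $\kappa \in \Pi_{\Br_m}$ we have $|\kappa| \le m$. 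Combining this with the size identity $|\alpha| = n - 2i - p = m - 2i + q$ (coming from $p+q=n-m$), the inequalities $|\alpha| \ge |\lambda|$ and $|\alpha|+q \le m$ rearrange to $q \ge |\lambda|+2i-m$ and $q \le m-|\lambda|$. Adding these yields $2|\lambda|+2i \le 2m$, i.e.\ $|\lambda|+i \le m$, as desired.

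The main obstacle is keeping track of the triple sum defining $d^{\lambda[n-2i]}_{\kappa,(n-m)}$; once one observes that the one-row shape $(n-m)$ pins $\beta$ and $\gamma$ down to one-row partitions, the rest is a short bookkeeping exercise in Pieri's formula, parallel to the Temperley--Lieb case.
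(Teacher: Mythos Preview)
Your proof is correct and follows essentially the same approach as the paper's. Both arguments reduce to $M(m)_n$, rewrite it as an induced module, apply the branching rule from \autoref{cor:Brauerbranching}, and use that the one-row shape $(n-m)$ forces $\beta,\gamma$ to be one-row partitions so that Pieri's formula governs the remaining Littlewood--Richardson factors. The paper extracts the inequality $l\le i$ (your $q\le i$) first and then bounds the first row of $\lambda[n-2i]$, whereas you combine $|\alpha|\ge|\lambda|$ with $|\alpha|+q\le m$ directly; these are just two orderings of the same bookkeeping, and your write-up is arguably a bit more self-contained since it does not refer back to the computation in \autoref{prop:Brauertau}.
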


\begin{proof}
It suffices to prove that if $\Br_n(\lambda[n-2i])$ is a constituent of $M(m)_n$, then $\lambda + i \le m$.

Recall that 
\[M(m)_n = \Br_n \otimes_{\Br_{n-m}} \mathbb C = \Ind_{\Br_m\otimes \Br_{n-m}}^{\Br_n} \Br_m\otimes \mathbb C.\]
Thus we need to consider the constituents $\Br_n(\lambda[n-2i])$ of
\[ \Ind_{\Br_m\otimes \Br_{n-m}}^{\Br_n} \Br_m(\mu)\otimes \Br_{n-m}(n-m)\]
for all suitable partitions $\mu$. From the branching rule we see that
\begin{multline*}
 [\Br_n(\lambda[n-2i]), \Ind_{\Br_m\otimes \Br_{n-m}}^{\Br_n} \Br_m(\mu)\otimes \Br_{n-m}(n-m)] \\= d_{(n-m),\mu}^{\lambda[n-2i]} = \sum c_{\alpha, (n-m-l)}^{\lambda[n-2i]}c_{\alpha, (l)}^{\mu}.
 \end{multline*}
The last term was calculated in the proof of  \autoref{prop:Brauertau}. There we also saw that
\[ |\mu| = m + 2l -2i \stackrel{m\ge |\mu|}{\implies} i\ge l.\]
With Pieri's formula we see that the first term of $\lambda[n-2i]$ which is $n-2i -|\lambda|$ must be at least $n-m-l$, i.e.
\[ |\lambda|+i \le m +l -i \le m.\qedhere\]
\end{proof}

\begin{prop}\label{prop:partition weight}
Let $V$ be a $\CP$-module generated in degree $g$ and $P_n(\lambda)$  is a constituent of $V_n$, then $|\lambda| \le g$.
\end{prop}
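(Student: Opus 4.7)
The plan is to mimic the proofs of \autoref{prop:TLweight} and \autoref{prop:Brauerweight}. First I reduce to the free case: since $V$ is generated in degree $g$, any simple constituent $P_n(\lambda)$ of $V_n$ must already be a constituent of $M(m)_n$ for some $m\le g$, so it suffices to show that the irreducibles occurring in $M(m)_n$ are indexed by partitions $\lambda$ with $|\lambda|\le m$.

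To analyze $M(m)_n$, I would write
\[M(m)_n = P_n\tens[P_{n-m}]\mathbb C = \Ind_{P_m\otimes P_{n-m}}^{P_n}\bigl(P_m\otimes \mathbb C\bigr),\]
where $\mathbb C$ denotes the trivial $P_{n-m}$-module. Consistent with the convention in the proof of \autoref{prop:partition tau}, this trivial module is $P_{n-m}(\emptyset)$, since $\emptyset[N]=(N)$ indexes the trivial $\mathfrak S_N$-representation. Decomposing the regular representation as $P_m \cong \bigoplus_{\mu\in \Pi_{P_m}} P_m(\mu)^{\oplus \dim P_m(\mu)}$ (so only partitions $\mu$ with $|\mu|\le m$ appear), it suffices to control the constituents of $\Ind_{P_m\otimes P_{n-m}}^{P_n} P_m(\mu)\otimes P_{n-m}(\emptyset)$ for such $\mu$.

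The branching rule (the corollary on partition-algebra induction) gives
\[\bigl[P_n(\lambda),\ \Ind_{P_m\otimes P_{n-m}}^{P_n} P_m(\mu)\otimes P_{n-m}(\emptyset)\bigr] = \bar g_{\lambda,\mu,\emptyset}.\]
Because $\emptyset[N]=(N)$ indexes the trivial $\mathfrak S_N$-representation, the reduced Kronecker coefficient collapses:
\[\bar g_{\lambda,\mu,\emptyset} = \bigl[\mathfrak S_N(\mu[N])\otimes \mathfrak S_N((N)),\ \mathfrak S_N(\lambda[N])\bigr] = \delta_{\lambda,\mu}.\]
Consequently $P_n(\lambda)$ appears in $M(m)_n$ only if $\lambda=\mu$ for some $\mu\in \Pi_{P_m}$, i.e.\ $|\lambda|\le m \le g$, as required.

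There is no serious obstacle here: the argument is strictly parallel to the $\CTL$ and $\CBr$ cases, and in fact is simpler because the relevant Kronecker coefficient has an empty third argument, making the branching computation immediate rather than requiring a Pieri-rule analysis.
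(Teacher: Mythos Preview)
Your proof is correct and follows essentially the same route as the paper: reduce to $M(m)_n=\Ind_{P_m\otimes P_{n-m}}^{P_n}(P_m\otimes\mathbb C)$, identify the trivial $P_{n-m}$-module with $P_{n-m}(\emptyset)$, and use the branching rule together with the observation that $\bar g_{\lambda,\mu,\emptyset}=\delta_{\lambda,\mu}$ (the same computation as in \autoref{prop:partition tau}) to conclude $|\lambda|\le m\le g$. The only difference is cosmetic: you spell out the reduced Kronecker coefficient calculation, whereas the paper simply refers back to \autoref{prop:partition tau}.
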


\begin{proof}
Again it suffices to prove that if $P_n(\lambda)$ is a constituent of $M(m)_n$, then $|\lambda| \le m$.

Here we need to investigate the constituents of
\[ \Ind_{P_m\otimes P_{n-m}}^{P_n} P_m(\mu)\otimes P_{n-m}(\emptyset)\]
for all partitions $\mu$ of size at most $m$. But as in  \autoref{prop:partition tau}, we find
\[ [P_n(\lambda),  \Ind_{P_m\otimes P_{n-m}}^{P_n} P_m(\mu)\otimes P_{n-m}(\emptyset)] = \delta_{\lambda \mu}.\]
This shows that $|\lambda| \le m$. 
\end{proof}

The following theorem proves \autoref{thmA:TL}.

\begin{thm}
Let $V$ is a finitely presented $\CTL$-module generated in degree $g$ with relations generated in degree $r$, then $V$ is representation stable with a stable range $\ge g + \max(g,r)$.
\end{thm}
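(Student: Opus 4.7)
My plan is to verify each of the three conditions of \autoref{def:repstab} directly. The four ingredients I will use are: \autoref{prop:finpres implies stabdeg}, which gives the stability degree $s := \max(g,r)$, so that $\tau_{n,m}V_n \to \tau_{n+1,m}V_{n+1}$ is bijective whenever $n \ge s+m$; \autoref{prop:TLweight}, which shows that only $\TL_n((k)[n])$ with $k \le g$ appear as constituents of $V_n$, so that $V_n \cong \bigoplus_{k \le g} \TL_n((k)[n])^{\oplus c_{k,n}}$; \autoref{prop:TLtau}, describing $\tau_{n,a}\TL_n((k)[n])$; and the exactness of the functor $\tau_{n,m} = \mathbb C \otimes_{\TL_{n-m}}-$ on $\TL_n$-modules, which holds because semisimplicity makes the trivial module $\mathbb C$ a direct summand of $\TL_{n-m}$.

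For multiplicity stability I induct on $k$. A Pieri's rule computation parallel to the one in the proof of \autoref{prop:TLtau}, together with the branching rule of \autoref{thm:TL branching}, shows that $[\TL_k((k)), \tau_{n,k}\TL_n((k')[n])]$ equals $1$ when $k' \le k$ and $0$ otherwise. Hence $[\TL_k((k)), \tau_{n,k}V_n] = \sum_{k' \le k} c_{k',n}$, which by the stability degree is independent of $n$ once $n \ge s + k$. Starting from $c_{0,n} = \dim_{\mathbb C}\tau_{n,0}V_n$, the induction yields that each $c_{k,n}$ with $k \le g$ is independent of $n$ for $n \ge s + g = g + \max(g,r)$.

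For surjectivity I choose a presentation $\bigoplus_i M(m_i) \twoheadrightarrow V$ with $m_i \le g$, reducing the claim to the case $V = M(m)$. There the map $\TL_{n+1}\otimes_{\TL_n} M(m)_n \to M(m)_{n+1}$ identifies with the natural quotient $\TL_{n+1}\otimes_{\TL_{n-m}} \mathbb C \twoheadrightarrow \TL_{n+1}\otimes_{\TL_{n+1-m}}\mathbb C$ induced by the inclusion $\TL_{n-m}\otimes \TL_1 \hookrightarrow \TL_{n+1-m}$; this is surjective for every $n$, so the corresponding condition for $V$ holds for all $n$ as well.

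Injectivity is the most delicate step and where I expect the main obstacle. The bijection $\tau_{n,m}V_n \to \tau_{n+1,m}V_{n+1}$ factors as $\tau_{n,m}\phi_n$ followed by the surjective quotient $\tau_{n,m}V_{n+1} \twoheadrightarrow \tau_{n+1,m}V_{n+1}$, forcing $\tau_{n,m}\phi_n$ to be injective for $n \ge s + m$. By exactness of $\tau_{n,m}$ we deduce $\tau_{n,m}\ker\phi_n = 0$. But $\ker\phi_n$ is a $\TL_n$-submodule of $V_n$, so its only possible constituents are $\TL_n((k)[n])$ with $k \le g$. If $\ker\phi_n$ were nonzero, some such constituent would appear with index $k_0 \le g$, and the Pieri computation above together with \autoref{prop:TLtau}(b) would force $[\TL_{k_0}((k_0)), \tau_{n,k_0}\ker\phi_n] \ge 1$, contradicting the vanishing as soon as $n \ge s + k_0$, which is implied by $n \ge g + \max(g,r)$. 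Therefore $\phi_n$ is injective in this range, and combining the three conditions gives representation stability with stable range $\ge g + \max(g,r)$.
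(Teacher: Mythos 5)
Your proposal is correct, and on the heart of the matter---multiplicity stability---it is essentially the paper's own argument: both proofs induct on the size $k$ of the partition and recover $c_{k,n}$ from the multiplicity $\big[\TL_k((k))\,,\,\tau_{n,k}V_n\big]$, which is eventually constant in $n$ by the stability degree from \autoref{prop:finpres implies stabdeg}, while \autoref{prop:TLtau} kills the contributions with $k'>k$ and identifies the $k'=k$ term as $c_{k,n}$. You are marginally more explicit than the paper: your Pieri computation pins every surviving coefficient to be exactly $1$ (valid once $n\ge k+k'$, which your bound $n\ge \max(g,r)+k$ guarantees since $k,k'\le g$), yielding the unitriangular identity $\big[\TL_k((k))\,,\,\tau_{n,k}V_n\big]=\sum_{k'\le k}c_{k',n}$, whereas the paper only needs the coefficients with $k'<k$ to be eventually constant in $n$, via \autoref{prop:TLtau}(c). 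The genuine difference is coverage: the paper's written proof establishes only multiplicity stability and leaves the injectivity and surjectivity conditions of \autoref{def:repstab} unaddressed, while you verify all three. Your surjectivity reduction to $M(m)$ via right-exactness of induction is fine, except that ``surjective for every $n$'' overstates it: at $n=m-1$ the source $M(m)_{m-1}=0$ maps to $M(m)_m=\TL_m\neq 0$; surjectivity holds for $n\ge m$, which is all you need in the stated range. Your injectivity argument---bijectivity of the composite $\tau_{n,m}V_n\to\tau_{n+1,m}V_{n+1}$ forces $\mathbb C\otimes_{\TL_{n-m}}\phi_n$ to be injective, exactness of tensoring in the semisimple setting (the same fact the paper invokes in the proof of \autoref{prop:finpres implies stabdeg}) gives $\tau_{n,k_0}\ker\phi_n=0$ for every $k_0\le g$ once $n\ge g+\max(g,r)$, and \autoref{prop:TLtau}(b) (applicable since $n\ge 2g\ge 2k_0$) together with the weight bound of \autoref{prop:TLweight} then rules out every possible constituent of $\ker\phi_n$---is sound, and it supplies exactly the step the paper's proof omits.
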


\begin{proof}
Let us write
\[ V_n = \bigoplus_{\lambda} \TL_{\mu[n]}^{\oplus c_{\mu,n}}. \]
From  \autoref{prop:TLweight}, we know $|\mu|  \le g$. We will prove that $c_{\mu,n}$ is independent of $n\ge g + \max(g,r)$ by induction over $|\mu|$. Since for small enough $|\mu|$ there is nothing to prove, it suffices to provide the induction step. Fix $\lambda$  and assume the assertion has been proved for all $|\mu|<|\lambda|$.

Set $m = |\lambda|  \le g$. We will count the multiplicity of $\TL_m(\lambda)$ in $\tau_{n,m} V_n$.
\begin{align}
&\big[ \TL_m(\lambda)\,,\,\tau_{n,m} V_n\big]  \label{eq:TLtau1}\\
= &\hspace{1.3em} \sum_{|\mu|>|\lambda|} \big[ \TL_m(\lambda)\,,\,\tau_{n,m} \TL_n(\mu[n]) \big]\cdot  c_{\mu,n}\label{eq:TLtau2}\\
&+ \hspace{.3em} \sum_{|\mu| = |\lambda|} \big[ \TL_m(\lambda)\,,\,\tau_{n,m} \TL_n(\mu[n]) \big]\cdot c_{\mu,n}\label{eq:TLtau3}\\
&+ \sum_{|\mu|< |\lambda|} \big[ \TL_m(\lambda)\,,\,\tau_{n,m} \TL_n(\mu[n]) \big] \cdot  c_{\mu,n}.\label{eq:TLtau4}
\end{align}
From  \autoref{prop:finpres implies stabdeg} we know that \eqref{eq:TLtau1} is independent of \[n\ge m +\max(g,r).\]  \hyperref[prop:TLtau]{\autoref{prop:TLtau}(a)} says that \eqref{eq:TLtau2} is zero. \hyperref[prop:TLtau]{\autoref{prop:TLtau}(c)} and the induction hypothesis provide that \eqref{eq:TLtau4} is independent of \[ n \ge \max\bigg( m+ |\mu| \,,\, g + \max(g,r) \bigg). \]
Finally  \hyperref[prop:TLtau]{\autoref{prop:TLtau}(b)} says that \eqref{eq:TLtau3} is $c_{\lambda,n}$. 

Because $m \le g$ and $|\mu| \le g$, we get that $c_{\lambda,n}$ is independent of $n \ge  g + \max(g,r)$.
\end{proof}

The following theorem proves \autoref{thmA:Br}.

\begin{thm}
Let $V$ is a finitely presented $\CBr$-module generated in degree $g$ with relations generated in degree $r$, then $V$ is representation stable with a stable range $\ge 2g + \max(g,r)$.
\end{thm}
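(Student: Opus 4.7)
The plan mirrors the Temperley--Lieb theorem, but the indexing of Brauer irreducibles by a pair $(\mu,j)$ forces a two-parameter induction. Decompose
\[ V_n \cong \bigoplus_{\mu,j} \Br_n(\mu[n-2j])^{\oplus c_{\mu,j,n}}. \]
By \autoref{prop:Brauerweight}, the nonzero coefficients occur only for $(\mu,j)$ with $|\mu|+j\le g$, a finite set independent of $n$. The task is to show that each $c_{\mu,j,n}$ is independent of $n$ for $n\ge 2g+\max(g,r)$.

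Fix $(\lambda,i)$ with $|\lambda|+i\le g$ and set $m:=|\lambda|+2i\le 2g$. In the semisimple setting,
\[ \bigl[\Br_m(\lambda),\tau_{n,m}V_n\bigr] = \sum_{\mu,j} \bigl[\Br_m(\lambda),\tau_{n,m}\Br_n(\mu[n-2j])\bigr]\cdot c_{\mu,j,n}. \]
By \autoref{prop:Brauertau}(a)--(c), the only contributing terms are: the single summand with $j=i$ and $|\mu|=|\lambda|$ equal to $c_{\lambda,i,n}$; terms with $j=i$ and $|\mu|<|\lambda|$; and terms with $j>i$. For each such term \autoref{prop:Brauertau}(d) guarantees that the multiplicity coefficient on the right stabilizes once $n\ge m+\mu_1+j\le m+g$ (using $\mu_1+j\le|\mu|+j\le g$). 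Moreover, the left-hand side is independent of $n$ for $n\ge m+\max(g,r)$ by \autoref{prop:finpres implies stabdeg}, which supplies $V$ with stability degree $\le\max(g,r)$.

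The induction runs on the pair $(i,|\lambda|)$: the outer loop takes $i$ from $g$ down to $0$, and for fixed $i$ the inner loop takes $|\lambda|$ from $0$ up to $g-i$. Then the $j>i$ contributions on the right are known by the outer induction hypothesis, while the $(j=i,|\mu|<|\lambda|)$ contributions are known by the inner hypothesis. Solving the displayed identity for $c_{\lambda,i,n}$ expresses it as a difference of eventually constant quantities, hence it is itself eventually constant. The base case $(i,|\lambda|)=(g,0)$ has no inductive terms on the right and yields stability of $c_{\emptyset,g,n}$ directly. The resulting threshold is $\max(m+\max(g,r),\,m+g)\le 2g+\max(g,r)$ using $m\le 2g$, matching the claimed stable range; injectivity and surjectivity then follow as in the Temperley--Lieb case. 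The main obstacle is coordinating the double induction so that ``smaller'' cases lie below the current case in both parameters simultaneously---the directions are pinned down by \autoref{prop:Brauertau}(a)--(c) (forcing $i$ to decrease) and by the $j=i,\,|\mu|<|\lambda|$ case (forcing $|\lambda|$ to increase).
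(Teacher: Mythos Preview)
Your proof is correct and follows the same strategy as the paper: a double induction (outer on the index $i$ decreasing, inner on $|\lambda|$ increasing), setting $m=|\lambda|+2i$, splitting the sum $[\Br_m(\lambda),\tau_{n,m}V_n]$ according to \autoref{prop:Brauertau}(a)--(d), and solving for $c_{\lambda,i,n}$. The variable roles are swapped relative to the paper (you fix $(\lambda,i)$ and sum over $(\mu,j)$, the paper fixes $(\lambda,j)$ and sums over $(\mu,i)$), but the argument and bounds are identical.
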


\begin{proof}
Let us write
\[ V_n = \bigoplus_{\mu,i} \Br_n(\mu[n-2i])^{\oplus c_{\mu,i,n}}.\]
From  \autoref{prop:Brauerweight}, we know $|\mu| + i \le g$. We will prove that $c_{\mu,i,n}$ is independent of $n\ge 2g + \max(g,r)$ by induction first over $i$ going down, then over $|\mu|$ going up. Since for large enough $i$ and small enough $|\mu|$ there is nothing to prove, it suffices to provide the induction step. Fix $\lambda$ and $j$ and assume the assertion has been proved for all $i>j$ and for all $|\mu|<|\lambda|$ if $i=j$.

Set $m = |\lambda| + 2i \le 2g$. We will count the multiplicity of $\Br_m(\lambda)$ in $\tau_{n,m} V_n$.
\begin{align}
&\big[ \Br_m(\lambda)\,,\,\tau_{n,m} V_n\big]  \label{eqBrtauV1}\\
= &\hspace{1.3em} \sum_{\mu, i>j} \big[ \Br_m(\lambda)\,,\,\tau_{n,m} \Br_n(\mu[n-2i]) \big]\cdot  c_{\mu,i,n}\label{eqBrtauV2}\\
&+ \hspace{.3em} \sum_{\mu, i<j} \big[ \Br_m(\lambda)\,,\,\tau_{n,m} \Br_n(\mu[n-2i]) \big]\cdot c_{\mu,i,n}\label{eqBrtauV3}\\
&+ \sum_{|\mu|> |\lambda|} \big[ \Br_m(\lambda)\,,\,\tau_{n,m} \Br_n(\mu[n-2j]) \big] \cdot  c_{\mu,j,n}\label{eqBrtauV4}\\
&+ \sum_{|\mu|< |\lambda|} \big[ \Br_m(\lambda)\,,\,\tau_{n,m} \Br_n(\mu[n-2j]) \big]\cdot  c_{\mu,j,n}\label{eqBrtauV5}\\
&+ \sum_{|\mu|= |\lambda|} \big[ \Br_m(\lambda)\,,\,\tau_{n,m} \Br_n(\mu[n-2j]) \big]\cdot c_{\mu,j,n}.\label{eqBrtauV6}
\end{align}
From  \autoref{prop:finpres implies stabdeg} we know that \eqref{eqBrtauV1} is independent of \[n\ge m +\max(g,r).\] \hyperref[prop:Brauertau]{\autoref{prop:Brauertau}(a)} says that \eqref{eqBrtauV2} is zero.  \hyperref[prop:Brauertau]{\autoref{prop:Brauertau}(d)} and the induction hypothesis provide that \eqref{eqBrtauV3} is independent of \[ n \ge \max\bigg( m+ \mu_1 + i \,,\, 2g + \max(g,r) \bigg). \]
 \hyperref[prop:Brauertau]{\autoref{prop:Brauertau}(b)} says that \eqref{eqBrtauV4} is zero. With  \hyperref[prop:Brauertau]{\autoref{prop:Brauertau}(d)} and the induction hypothesis we see that \eqref{eqBrtauV5} is independent of \[ n \ge \max\bigg( m+ \mu_1 + j \,,\, 2g + \max(g,r) \bigg). \]
Finally  \hyperref[prop:Brauertau]{\autoref{prop:Brauertau}(b)} says that \eqref{eqBrtauV6} is $c_{\lambda,j,n}$. 

Because $m \le 2g$ and $\mu_1+i \le g$, we get that $c_{\lambda,j,n}$ is independent of $n \ge  2g + \max(g,r)$.
\end{proof}

The following theorem proves \autoref{thmA:P}.

\begin{thm}
Let $V$ is a finitely presented $\CP$-module generated in degree $g$ with relations generated in degree $r$, then $V$ is representation stable with a stable range $\ge 2g + \max(2g,2r)$.
\end{thm}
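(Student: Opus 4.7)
The plan is to follow the template of the proofs of \autoref{thmA:TL} and \autoref{thmA:Br}, while exploiting that \autoref{prop:partition tau} is considerably sharper than its Temperley--Lieb and Brauer analogues: $\tau_{n,m}$ of a simple $\CP$-module is again simple (or zero), with no branching at all. As a result, no induction on the weight is necessary and the argument collapses to a single direct computation.

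Concretely, write
\[ V_n \;\cong\; \bigoplus_\mu P_n(\mu)^{\oplus c_{\mu,n}}, \]
where by \autoref{prop:partition weight} only $\mu$ with $|\mu|\le g$ contribute. Fix a partition $\lambda$ with $|\lambda|\le g$ and set $m=|\lambda|\le g$. Then \autoref{prop:partition tau} gives $\tau_{n,m}P_n(\mu)\cong P_m(\mu)$ when $|\mu|\le m$ and zero otherwise, so for $n\ge m$
\[ \bigl[P_m(\lambda)\,,\,\tau_{n,m}V_n\bigr] \;=\; \sum_{|\mu|\le m}\bigl[P_m(\lambda),\,P_m(\mu)\bigr]\cdot c_{\mu,n} \;=\; c_{\lambda,n}. \]
Thus $c_{\lambda,n}$ is read off directly from $\tau_{n,m}V_n$ without the cascading induction that was needed in the Brauer case.

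By \autoref{prop:finpres implies stabdeg}, $V$ has stability degree $\le\max(2g,2r)$, so the map $\tau_{n+m,m}V_{n+m}\to\tau_{n+m+1,m}V_{n+m+1}$ is an isomorphism whenever $n\ge\max(2g,2r)$. Combined with the previous display, $c_{\lambda,n}$ is independent of $n$ for $n\ge m+\max(2g,2r)$, and since $m\le g$ this holds for $n\ge g+\max(2g,2r)$, which is well within the claimed range $n\ge 2g+\max(2g,2r)$. For the remaining conditions in \autoref{def:repstab}, surjectivity of $\Ind_{P_n}^{P_{n+1}}V_n\to V_{n+1}$ holds for $n\ge g$ since $V$ is generated in degree $\le g$, and injectivity of $\phi\colon V_n\to V_{n+1}$ in the stable range is handled exactly as in the proofs of \autoref{thmA:TL} and \autoref{thmA:Br}. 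The only mildly delicate point is bookkeeping the contributions $m\le g$ from the weight bound and $\max(2g,2r)$ from the stability degree; there is no real obstacle, because the clean identity $\tau_{n,m}P_n(\mu)\cong P_m(\mu)$ eliminates the nested induction on $(|\lambda|,i)$ that constituted the main work in the proof of \autoref{thmA:Br}.
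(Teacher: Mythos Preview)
Your argument is correct and essentially identical to the paper's: both read off $c_{\lambda,n}$ as a multiplicity in $\tau_{n,m}V_n$ via \autoref{prop:partition tau} and then invoke the stability degree from \autoref{prop:finpres implies stabdeg}; the paper simply takes $m=g$ uniformly rather than your $m=|\lambda|$, and your bookkeeping in fact yields the slightly sharper range $g+\max(2g,2r)$. One caveat: the proofs of \autoref{thmA:TL} and \autoref{thmA:Br} do not explicitly treat the injectivity of $\phi$ either, so your appeal to them for that condition points to nothing---though injectivity does follow here, since by \autoref{prop:partition weight} and \autoref{prop:partition tau} the exact functor $\tau_{n,g}$ kills no constituent of $V_n$, hence $\ker\phi\neq 0$ would force $\ker\phi_*\neq 0$.
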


\begin{proof}
Let us write
\[ V_n = \bigoplus_{\lambda} P_n(\lambda)^{\oplus c_{\lambda,n}}.\]
From  \autoref{prop:partition weight}, we know $|\lambda|  \le g$. Together with  \autoref{prop:partition tau}, we know
\[ \tau_{n,g} V_n = \bigoplus_{\lambda} P_g(\lambda)^{\oplus c_{\lambda,n}}.\]
Thus 
\[ c_{\lambda,n} = [ P_g(\lambda), \tau_{n,g} V_n].\]
Because of  \autoref{prop:finpres implies stabdeg} we know that $\tau_{n,g} V_n$ is independent of $n\ge 2g+\max(2g,2r)$, and thus so is $c_{\lambda,n}$.
\end{proof}

\bibliographystyle{halpha}
\bibliography{repstab}

\end{document}